\def\doctype{}
\newcommand{\cA}{\mathcal{A}}
\newcommand{\cB}{\mathcal{B}}
\newcommand\Z{\mathbb{Z}}
\newcommand{\cF}{\mathcal{F}}
\newcommand{\cD}{\mathcal{D}}
\newcommand{\comment}[1]{}
\numberwithin{equation}{section}
\let\oldsection\section
\newcommand\boldsection[1]{\oldsection{\bf #1}}
\newcommand\starsection[1]{\oldsection*{\bf #1}}
\renewcommand\section{\@ifstar\starsection\boldsection}
\newtheoremstyle{theorem}
  {12pt}		  
  {0pt}  
  {\sl}  
  {\parindent}     
  {\bf}  
  {. }    
  { }    
  {}     
\theoremstyle{theorem}
\newtheorem{thm}{Theorem}[section]  
\newtheorem{lemma}[thm]{Lemma}     
\newtheorem{prop}[thm]{Proposition}
\newtheoremstyle{definition}
  {12pt}		  
  {0pt}  
  {}  
  {\parindent}     
  {\bf}  
  {. }    
  { }    
  {}     
\theoremstyle{definition}
\newtheorem{ex}[thm]{Example}
\newcommand\rk{{\sc Remark.} }
\renewcommand{\proofname}{Proof}
\renewenvironment{proof}[1][\proofname]{\par
  \pushQED{\qed}%
  \normalfont \partopsep=\z@skip \topsep=\z@skip
  \trivlist
  \item[\hskip\labelsep
        \scshape
    #1\@addpunct{.}]\ignorespaces
}{%
  \popQED\endtrivlist\@endpefalse
}
\renewcommand*\@maketitle{%
  \normalfont\normalsize
  \@adminfootnotes
  \@mkboth{\@nx\shortauthors}{\@nx\shorttitle}%
  \global\topskip42\p@\relax 
  \@settitle
  \ifx\@empty\authors \else {\vskip 1em
\vtop{\centering\shortauthors\@@par}} \fi
  \ifx\@empty\@date \else {\vskip 1em \vtop{\centering\@date\@@par}}\fi 
  \ifx\@empty\@dedicatory
  \else
    \baselineskip18\p@
    \vtop{\centering{\footnotesize\itshape\@dedicatory\@@par}%
      \global\dimen@i\prevdepth}\prevdepth\dimen@i
  \fi
  \@setabstract
  \normalsize
  \if@titlepage
    \newpage
  \else
    \dimen@34\p@ \advance\dimen@-\baselineskip
    \vskip\dimen@\relax
  \fi
} 
\renewcommand*\@adminfootnotes{%
  \let\@makefnmark\relax  \let\@thefnmark\relax
  \ifx\@empty\@subjclass\else \@footnotetext{\@setsubjclass}\fi
  \ifx\@empty\@keywords\else \@footnotetext{\@setkeywords}\fi
  \ifx\@empty\thankses\else \@footnotetext{%
    \def\par{\let\par\@par}\@setthanks}%
  \fi
\thispagestyle{titlepage}
}
\begin{document}

\title[KTS with subdesigns]{\large An Update on the Existence of Kirkman Triple Systems with Subdesigns}

\author{Peter J.~Dukes}
\address{\rm Peter J.~ Dukes:
Mathematics and Statistics,
University of Victoria, Victoria, Canada
}
\email{dukes@uvic.ca}

\author{Esther R.~Lamken}
\address{\rm Esther R.~Lamken:
773 Colby Street, San Francisco, CA, USA 94134
}
\email{esther.lamken@gmail.com}

\date{\today}

\begin{abstract}
A Kirkman triple system of order $v$, KTS$(v)$, is a resolvable Steiner triple system on 
$v$ elements. In this paper, we investigate an open problem posed by Doug Stinson, namely the existence of KTS$(v)$ which contain as a subdesign a Steiner triple system of order $u$,  an 
STS$(u)$. We present several different constructions for designs of this form. As a consequence, we completely settle the extremal case $v=2u+1$, for which a list of possible exceptions had remained for close to 30 years.  
Our new constructions also provide the first infinite classes for  the more general problem.  
We reduce  the other maximal case $v=2u+3$ to (at present)  three possible exceptions.  
In addition, we obtain results for other cases of the form $v=2u+w$ and also near $v=3u$.  Our primary method introduces a new type of Kirkman frame which contains  group divisible design subsystems. These subsystems can occur with different configurations, and we use two different varieties in our constructions.
\end{abstract}

\subjclass[2020]{05B07 (primary); 05B05, 05B10 (secondary)}
\maketitle
\hrule

\section{Introduction}
\label{intro}

A \emph{Steiner triple system} is a pair $(V,\cB)$, where $V$ is a finite set of points, $\cB\subset \binom{V}{3}$ is a set of $3$-element subsets of $V$ called \emph{blocks}, and such that any two distinct points appear together in exactly one block.  The abbreviation STS$(v)$ is used to denote a Steiner triple system as above, where $v=|V|$ is its \emph{order}.  It is easy to see from basic counting that $v\equiv 1$ or $3 \pmod{6}$ is a necessary condition for  the existence of STS$(v)$.  Conversely, it is known \cite{Kirkman-STS} that STS$(v)$ exist for all such orders $v \ge 1$.  Steiner triple systems represent the first nontrivial case for block designs.

A \emph{subdesign} in a Steiner triple system $(V,\cB)$ is a pair $(U,\cA)$ where $U \subseteq V$, $\cA \subseteq \cB$, and $(U,\cA)$ is itself a Steiner triple system.
From the Doyen-Wilson theorem, \cite{DW}, there exists an STS$(v)$ containing an STS$(u)$ as a subdesign for $v>u$ if and only if $u$ and $v$ are both admissible orders for Steiner triple systems and $v \ge 2u+1$.  The inequality is necessary because an element in $V \setminus U$ occurs in blocks with every other point in pairs, but with at most one point of $U$ at a time.

A combinatorial design $\cD$ is \emph{resolvable} if the blocks of $\cD$ can 
be partitioned into parallel classes such that each element of $\cD$ is contained in 
precisely one block of each class. 
In more detail, for a resolvable Steiner triple system of order $v$, 
we have $|\cB|=v(v-1)/6$ blocks resolving into $(v-1)/2$ parallel classes, each of size $v/3$.  The term \emph{Kirkman triple system}, with abbreviation KTS$(v)$, is generally used for a resolvable Steiner triple system.  
Indeed, the famous 
Kirkman Schoolgirl problem, posed in 1850 \cite{Kirkman-schoolgirl}, 
requests the construction of a KTS$(15)$.  In Figure~\ref{KS15}, we display Kirkman's 
solution to the Kirkman Schoolgirl Problem, \cite[Example 22.1.3] {Anderson}.  It 
was constructed from what Kirkman called a `curious arrangement' which is now 
known as a Room square of side 8. Note that this design contains 
as a subdesign an STS$(7)$ defined on the set $\{1,2,3,4,5,6,7\}$.  This is the first 
nontrivial example of a Kirkman triple system  which contains as a subdesign a Steiner 
triple system.

\begin{figure}[htbp]
\begin{tt}
\begin{tabular}{ccccc}
123 & hi4 & kl5 & mn6 & op7 \\
147 & il2 & mo3 & np5 & hk6 \\
156 & no2 & hl3 & mp4 & ik7 \\
267 & lp1 & in3 & ko4 & hm5 \\
245 & im1 & kp3 & lo6 & hn7 \\
357 & ho1 & km2 & ln4 & ip6 \\
346 & kn1 & hp2 & io5 & lm7 
\end{tabular}
\end{tt}
\caption{Kirkman's KTS(15), with a subdesign on $\{1,2,\dots,7\}$}
\label{KS15}
\end{figure}

In this paper, we explore the existence question for KTS$(v)$ containing STS$(u)$ as  subdesigns.  This is listed as open problem \#4 in Doug Stinson's survey \cite{Stinson-KTS} on Kirkman triple systems.  A further discussion of the problem and its present status appears in \cite[Section 19.7]{CR}.
 In what follows, we assume $v \equiv 3\pmod{6}$, $u \equiv 1$ or $3 \pmod{6}$ and $v>u$.  In a KTS$(v)$, a `Kirkman subsystem' or sub-KTS$(u)$  is a subdesign which is resolvable in such a way that parallel classes of the KTS$(u)$ are restrictions of parallel classes of the KTS$(v)$.  We do not assume such extra structure for subdesigns here.  Nonetheless, it is known \cite{RS,Stinson-KTS} that a KTS$(v)$ with a sub-KTS$(u)$ exists if and only if $u,v \equiv 3 \pmod{6}$ and $v \ge 3u$.  This provides a partial existence result for our problem.

In \cite{MV}, Mullin and Vanstone consider the case  $v=2u+1$, that is a KTS$(v)$ 
containing a maximum subdesign STS$(u)$.  They generalized Kirkman's original construction 
 and used  Room square starters to provide a direct construction for KTS$(2u+1)$ 
with maximum subdesigns, STS$(u)$.  They also  established the PBD-closure of such designs. 
Mullin, Stinson and Vanstone consider this  case in greater detail in \cite{MSV},  
introducing  the notation MK$(v)$ for a KTS$(v)$ containing a maximum subdesign 
STS$(u)$ for $v=2u+1$. They showed using a finite-field construction, 
PBD-closure and other recursive constructions that an MK$(v)$ exists for all but $19$ possible values of $v$.  Stinson's survey \cite[Theorem 4.2]{Stinson-KTS}, reported the set of possible exceptions for MK$(2u+1)$ as having been reduced to
\[ \mathcal{E}=\{115, 145, 205, 265, 355, 415, 649, 655, 697, 1243\}. \]
Here, we update the problem by eliminating these remaining exceptions. 
 To take care of the cases not covered by PBD-closure, we introduce the idea of Kirkman frames 
which contain as subdesigns group divisible designs with block size $3$. 

\begin{thm}
\label{thm:mkts}
There exists an MK$(2u+1)$ for all $u \equiv 1 \pmod{6}$.
\end{thm}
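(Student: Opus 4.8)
The quickest route is to reduce the whole statement to a finite check and then dispose of that by a new recursive construction. Since $v=2u+1\equiv 3\pmod 6$ forces $u\equiv 1\pmod 6$, the range claimed in the statement is exactly the range of admissible $u$, so the content is that the answer is affirmative with no exceptions. By the finite-field and PBD-closure arguments of Mullin--Stinson--Vanstone, together with the subsequent improvements recorded in Stinson's survey, an MK$(2u+1)$ is already known to exist for every $u\equiv 1\pmod 6$ apart from the ten values
\[
\mathcal{E}=\{115,145,205,265,355,415,649,655,697,1243\},
\]
so it suffices to construct an MK$(2u+1)$ for each $u\in\mathcal{E}$.

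For those I would use a single device built on the new ingredient flagged in the introduction: a Kirkman frame that also carries a group divisible design of block size $3$ as a subsystem. Start from a Kirkman frame of type $g^n$ which, on a designated sub-part $H_i$ of size $g/2$ in each group $G_i$, contains a $\{3\}$-GDD of type $(g/2)^n$ as a subsystem. Adjoin a new point $\infty$ and, on each set $G_i\cup\{\infty\}$ --- which has $g+1=2(g/2)+1$ points --- place a smaller MK$(g+1)$ positioned so that its sub-STS$(g/2)$ occupies $H_i$. For each $i$, the $g/2$ partial parallel classes of the frame missing $G_i$ glue onto the $g/2$ resolution classes of the MK$(g+1)$ on $G_i\cup\{\infty\}$ to partition the whole point set, so the outcome is a KTS$(gn+1)=$ KTS$(2u+1)$ with $u=gn/2$. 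At the same time, the GDD subsystem on $\bigcup_i H_i$, together with the sub-STS$(g/2)$ placed on each $H_i$, covers every pair among the $gn/2=u$ points of $\bigcup_i H_i$ exactly once, i.e.\ forms an STS$(u)$ sitting inside the KTS$(2u+1)$. Hence an MK$(2u+1)$. The second variety is the same mechanism but with the $\{3\}$-GDD subsystem supported on a set of whole groups rather than on sub-parts of all of them; one then assembles the STS$(u)$ from that GDD and from the KTS's (rather than sub-STS's) filled into those groups, and allows the frame itself to be non-uniform when that is what makes the orders come out.

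What then remains, for each of the ten orders in $\mathcal{E}$, is to pick a frame type with $gn/2$ equal to the target $u$, to check that the smaller MK and KTS designs needed for the fills are available --- these are trivial, classical, or among the non-exceptional cases already handled --- and, crucially, to construct the Kirkman frame with the prescribed GDD subsystem. I expect that last step to be where the real work lies. Ordinary Kirkman frames of the needed types are classical, but demanding that a $\{3\}$-GDD live inside one as a subsystem is genuinely new: such frames have to be produced either directly, by a starter/difference-family construction over a suitable abelian group in which a chosen sub-family of base blocks is arranged to generate the GDD, or recursively, by inflating a smaller frame-with-GDD-subsystem by a resolvable transversal design (or resolvable GDD) and filling in. The delicate parts will be getting all the divisibility conditions to cooperate simultaneously for each of the ten values, and making sure the recursions do not spawn new small exceptions of their own; I would present the verification as a short table listing, for each $u\in\mathcal{E}$, the frame employed, its GDD subsystem, and the ingredients placed into the groups.
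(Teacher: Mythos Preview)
Your overall strategy matches the paper's: reduce to a finite list via PBD-closure (the paper in fact cites an updated closure result that already disposes of $u=655$ and $u=1243$), then handle the survivors using Kirkman frames carrying $3$-GDD subsystems and filling holes with smaller MK's. That core idea is exactly right, and you correctly identify building such frames as the crux.

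However, your filling construction has a parameter slip that renders it unusable for the actual exceptions. You adjoin a single point $\infty$ and fill each group with an MK$(g+1)$ whose sub-STS$(g/2)$ sits on $H_i$. For an MK$(g+1)$ to exist you need $g/2\equiv 1\pmod 6$, i.e.\ $g\equiv 2\pmod{12}$; the frame and sub-GDD conditions then force $n\equiv 1\pmod 6$. So $u=(g/2)\cdot n$ must factor as a product of two integers each $\equiv 1\pmod 6$ with $n\ge 7$. Check the list: for every $u\in\mathcal{E}$ the only such factorization is $1\cdot u$, which means $g=2$ and a frame of type $(1;2)^u$ --- but that frame is equivalent to the very MK$(2u+1)$ you are trying to build, so the recursion is vacuous. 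The paper instead adjoins \emph{three} new points $\{\alpha,\beta,\infty\}$ and fills each $G_i\cup\{\alpha,\beta,\infty\}$ with an MK$(g+3)$ whose sub-STS$(g/2+1)$ lies on $H_i\cup\{\infty\}$. This shifts the congruence to $g\equiv 0\pmod{12}$, and frames of types $(6;12)^n$ and $(12;24)^n$ (built by starters and PBD-closure) then cover $u\in\{205,265,355,415,649,697\}$ directly.

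Even with the corrected fill, two values need separate treatment that your proposal does not anticipate: $u=115$ is handled only by a direct strong-starter construction of a $(1;2)^{115}$ frame (no recursion reaches it), and $u=145$ requires an extra inflation step (expand a $(12;24)^4$ frame by an RTD$(3,3)$ to type $(36;72)^4$ and fill with MK$(75)$). Your ``second variety'', with the GDD living on a subset of whole groups, plays no role in the MK theorem; the paper uses it only later for non-maximal subdesigns.
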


This result provides KTS$(v)$ with maximum STS subdesigns for $v\equiv 3 \pmod{12}$. 
Similar methods can be used
for the case $v\equiv 9\pmod{12}$.  In this case, the largest possible Steiner triple 
system subdesign is an
STS$(u)$ where  $v=2u+3$.

\begin{thm}
\label{thm:2u+3}
There exists a KTS$(2u+3)$ which contains as a subdesign an STS$(u)$ for $u\equiv 3\pmod{6}$ 
and $u\geq 3$ except possibly for $u=3x$ for $x \in \{83,107,179\}$.
\end{thm}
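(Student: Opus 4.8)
The plan is to adapt the approach behind Theorem~\ref{thm:mkts} to the regime $v\equiv 9\pmod{12}$. Write $u\equiv 3\pmod 6$, so that $v=2u+3\equiv 9\pmod{12}$; by the Doyen--Wilson theorem \cite{DW} this is the largest $v$ for which an STS$(u)$ subdesign of a KTS$(v)$ can exist, and, as for Theorem~\ref{thm:mkts}, the crucial freedom is that the subdesign need only be an \emph{unresolved} STS$(u)$, so it may meet every group of an auxiliary frame. The proof splits into (i) a frame-completion construction built on the new Kirkman frames that carry a $\{3\}$-GDD as a subsystem; (ii) a PBD-closure/recursion step upgrading a thin family of base designs to all large admissible $u$; and (iii) ad hoc treatment of the residual small and sporadic orders, after which exactly the three stated values survive.

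For (i), I would take a Kirkman frame $\cF$ of type $g^n$ (and a companion type with one deviant group, to reach the remaining residues) containing a group divisible design of block size $3$ and type $h^n$ as a subsystem --- one of the two configurations of GDD-subsystem available, the other being used for the complementary constructions. Adjoining a small set of points and filling each group of $\cF$ with an appropriate nested KTS ingredient --- several of which are precisely the MK-designs supplied by Theorem~\ref{thm:mkts} --- yields a KTS on the whole point set, while the GDD-subsystem, once its groups are filled to match, assembles into an STS$(u)$ spanning all $n$ groups. The arithmetic is arranged so that $\cF$ exists, all filling ingredients have admissible orders, and the completed orders satisfy $v=2u+3$; since this order is extremal, the relation between $g$ and $h$ is essentially forced (one arrives at $g=2h$ after accounting for the adjoined points), and the filling ingredients must themselves contain nested subdesigns, which restricts the usable group sizes --- with Rees--Stinson \cite{RS} dictating which nested Kirkman ingredients are available.

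For (ii), I would establish that the set of orders $u$ for which the desired design exists is closed under an analogue of the PBD-closure that Mullin and Vanstone \cite{MV} obtained for MK-designs (modulo the standard bookkeeping), feed in the orders produced in (i) together with a short list of small direct constructions, and add a self-referential recursion that replaces $u$ by a quotient-type parameter --- reducing, for instance, $u=6jn+3$ to the smaller instance $u=6j+3$ of the same problem. Wilson's theorem then disposes of all admissible $u$ beyond a fixed bound, and the finitely many orders below it --- or missed by (i) for lack of a suitable factorisation --- are handled individually by small frames, group-splitting and direct designs; collating the survivors leaves precisely $u\in\{3\cdot 83,\,3\cdot 107,\,3\cdot 179\}$.

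The main obstacle is part (i): the extremal nature of $v=2u+3$ pins the frame parameters into a razor-thin window, so the required Kirkman frames with GDD-subsystems, and the doubly-nested KTS ingredients used to fill them, have to be produced (via finite-field and difference-family direct constructions, then recursion) across a dense enough set of parameters. It is precisely the breakdown of these building blocks for the orders $u=3p$ with $p\in\{83,107,179\}$ --- whose shape forbids the factorisations the construction needs, and which also slip through PBD-closure --- that leaves exactly those three cases unsettled.
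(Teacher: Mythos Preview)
Your outline has the right skeleton --- Kirkman frames carrying a maximal sub-$3$-GDD of type $(h;2h)^n$, filled to produce a KTS$(2u+3)$ with sub-STS$(u)$, with PBD-closure leaving a short exception list --- and that is indeed what the paper does. But several of your specifics are off, and one of them matters.

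First, the filling ingredients are not MK-designs. The paper works exclusively with $h=3$: it takes a Kirkman frame of type $(3;6)^n$, adjoins three points $Z$, and fills each hole of size $6$ with an RTD$(3,3)$ on $X_i\cup Y_i\cup Z$ (equivalently a KTS$(9)$ minus its group-class). The sub-GDD of type $3^n$ on $\bigcup Y_i$, together with the triples $Y_i$, becomes the STS$(3n)$; no nested MK ingredient, and no appeal to Rees--Stinson, is needed here. The companion construction (for some leftover $n$) inflates by~$3$ to type $(9;18)^n$ and fills with the KTS$(21)$ containing a sub-STS$(9)$ from Lemma~\ref{smallcases}.

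Second --- and this is the point where your sketch would actually stall --- PBD-closure is \emph{not} applied to the set of achievable $u$, but to the number of groups $n$ of the frame. The paper shows (Theorem~\ref{PBD-frames}) that $\{n:\text{a }(3;6)^n\text{ frame exists}\}$ is PBD-closed, seeds it with direct constructions for $n\in\{5,7,9,\dots\}$, and invokes the known $\{5,7,9\}$-PBD closure (Theorem~\ref{pbd-579}). That closure has a fixed list of possible exceptions; after direct and recursive constructions knock most of them out, the residual set is $\mathcal{E}_2=\{83,87,107,111,139,179\}$. Of these, $n=87,111$ fall to the inflate-by-$3$ trick above, and $n=139$ falls to tripling an MK$(279)$ via Proposition~\ref{WFC-triple} --- the only place Theorem~\ref{thm:mkts} enters. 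What remains is exactly $n\in\{83,107,179\}$, i.e.\ $u=3n$. Your narrative that these three survive because ``their shape forbids the factorisations'' is not quite it: they survive because they are (possible) exceptions to the $\{5,7,9\}$-PBD spectrum and none of the auxiliary tricks reaches them.
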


In the next section, we describe preliminary results for group divisible designs and 
frames, and introduce the idea of frames which contain group divisible 
designs as subdesigns. We are primarily interested in the case when such subdesigns are maximal. The starter construction described in \cite{MV} can be generalized to provide a cyclic construction for Kirkman frames with 3-GDD subdesigns. 
Section~\ref{sec:frames} collects existence results for frames with maximum subdesigns.  
New direct constructions combined with PBD-closure and basic recursive constructions 
for frames are the main techniques used in this section.
The proofs of Theorems~\ref{thm:mkts} and \ref{thm:2u+3} are given in Section~\ref{sec:maximal}.
Then, in Section~\ref{sec:general}, we obtain some preliminary results on the existence problem for Kirkman triple systems with subdesigns which are not necessarily maximal.  
We conclude with some remarks on techniques which may be useful 
in future constructions and on computational challenges for
completing the spectrum.

\section{Preliminary results}

\subsection{GDDs and frames}

Let $T$ denote an integer partition of $v$.
A \emph{group divisible design} of \emph{type} $T$ with block sizes in $K$,
denoted by GDD$(v,K)$ of type $T$ or as a $K$-GDD of type $T$,  is a triple $(V,\Pi,\cB)$ such that
\begin{itemize}[topsep=0pt]
\item
$V$ is a set of $v$ points;
\item
$\Pi=\{V_1,\dots,V_u\}$ is a partition of $V$ into \emph{groups} so that
$T=(|V_1|,\dots,|V_u|)$;
\item
$\cB \subseteq \cup_{k \in K} \binom{V}{k}$ is a set of blocks meeting each
group in at most one point; and
\item
any two points from different groups appear together in exactly one
block.
\end{itemize}

It is convenient to use exponential notation for the type of a GDD. 
That is, we say that a  GDD has type $g_1^{u_1}g_2^{u_2} \dots g_t^{u_t}$ if there are $u_i$ 
groups of size $g_i$ for $i=1,2,\dots,t$.  A GDD with a single block size $k$ is often 
denoted simply as a $k$-GDD instead of $\{k\}$-GDD. 

Two special types of GDDs will be used in our recursvive constructions. 
A \emph{pairwise balanced block design} or 
PBD, denoted by PBD$(v,K)$, is a GDD$(v,K)$ of type $1^{v}$.  
A GDD is \emph{uniform} if all groups have the same type. A \emph{transversal design} 
TD$(k,n)$ is a uniform GDD which has block size $k$ and precisely 
$k$ groups of size $n$,  a $k$-GDD of type $n^k$.  Note that the blocks in this case 
are transversals of the partition. It is well known that a TD$(k,n)$ is 
equivalent to a set of $k-2$ mutually orthogonal latin squares (MOLS) of order $n$.  
A resolvable TD$(k,n)$ is denoted by RTD$(k,n)$ and is equivalent to a TD$(k+1,n)$.
We refer the reader to \cite{Handbook} for results on the existence of transversal designs.

A $K$-frame of type $T$is a $K$-GDD of type $T$ whose block set can be 
partitioned into partial resolution classes such that each class is  a partition of 
$V\setminus V_i$ for some $V_i \in \Pi$.  
The same notation that we used  for GDDs is used for frames.   So a
frame with a single block size $k$ is called a $k$-frame.  In particular, $3$-frames can be used to construct Kirkman triple systems and for this reason are often  called Kirkman frames. 
In what follows, we restrict our attention to the case of block size three.
We recall here that necessary and sufficient conditions are known for both $3$-GDDs and Kirkman frames. 

\begin{thm}[\cite{Hanani}]
\label{3GDD}
The necessary and sufficient conditions for the existence of a $3$-GDD of type 
$g^u$ are $u\geq 3$, $g(u-1) \equiv 0 \pmod{2}$, and $u(u-1)g^2\equiv 0 \pmod{6}$.
\end{thm}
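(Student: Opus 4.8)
The plan is to establish necessity by a standard double count and sufficiency by inflating a short list of ``small'' designs derived from Steiner triple systems and pairwise balanced designs.

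For necessity, let $(V,\Pi,\cB)$ be a $3$-GDD of type $g^u$. A block meets three distinct groups, so $u\geq 3$. Fixing a point $x$ in a group $V_i$, each of the $g(u-1)$ points of $V\setminus V_i$ lies with $x$ in exactly one block and each block on $x$ supplies two of them, so $g(u-1)$ is even. Since there are $\binom{u}{2}g^2$ pairs of points from distinct groups, each in exactly one block and each block carrying three of them, $|\cB|=u(u-1)g^2/6$, which forces $6\mid u(u-1)g^2$.

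For sufficiency, assume all three conditions and put $g_0=\gcd(g,6)\in\{1,2,3,6\}$. Because the parity of $g_0$ equals that of $g$ and $3\mid g_0\iff 3\mid g$, a short check shows that type $g_0^u$ satisfies exactly the same two congruences as type $g^u$, so it is admissible. Given a $3$-GDD of type $g_0^u$, replacing every point by $g/g_0$ copies and every block by a TD$(3,g/g_0)$ (a Latin square of order $g/g_0$, which exists for every order $\geq 1$) yields a $3$-GDD of type $g^u$. Thus it suffices to handle $g_0\in\{1,2,3,6\}$, and I would do so as follows. For $g_0=1$ the design is an STS$(u)$ with $u\equiv 1,3\pmod 6$, which exists by \cite{Kirkman-STS}. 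For $g_0=2$ admissibility gives $u\equiv 0,1\pmod 3$, hence $2u+1\equiv 1,3\pmod 6$; delete a point $\infty$ from an STS$(2u+1)$, take the $u$ pairs covered by the blocks through $\infty$ as the groups, and retain the remaining blocks---they meet each group in at most one point and cover precisely the cross-group pairs, giving a $3$-GDD of type $2^u$. For $g_0=3$ admissibility gives $u$ odd; take a $\{3,5\}$-PBD on $u$ points, inflate it by $3$, and fill blocks of sizes $3$ and $5$ with $3$-GDDs of types $3^3$ (a TD$(3,3)$) and $3^5$ (developed from $\{0,1,4\}$ and $\{0,2,8\}$ over $\Z_{15}$ with groups $\{i,i+5,i+10\}$). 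For $g_0=6$, inflate a $\{3,4,5\}$-PBD on $u$ points by $6$, filling with $3$-GDDs of types $6^3$ (a TD$(3,6)$), $6^4$ (an STS$(9)$ with a point removed, inflated by $3$), and $6^5$ (type $3^5$ inflated by $2$); the handful of $u$ admitting no such PBD, namely $u=6,8$, get separate treatment (e.g.\ type $6^6$ as an STS$(13)$ minus a point inflated by $3$, and type $6^8$ by an explicit construction).

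The hard part will not be the scaffolding above but the inputs it consumes: the full existence spectra of $\{3,5\}$- and $\{3,4,5\}$-pairwise balanced designs, together with the dispatch of the genuinely small cases---the base GDDs $3^5$, $6^4$, $6^8$, and the few PBD exceptions---by direct, difference-family, or computer constructions. Arranging enough small ingredients for the PBD-inflation arguments to take over from some point on is where all of the real work lies; this is essentially the casework carried out in \cite{Hanani}. (One could also obtain the $g_0=3$ case more quickly from the known existence of Kirkman triple systems---a parallel class of a KTS$(3u)$ with $u$ odd serves as the set of groups of a $3$-GDD of type $3^u$---at the cost of invoking that stronger result.)
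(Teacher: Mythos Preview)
The paper does not prove this theorem at all; it is simply quoted from Hanani with a citation and no argument, so there is no ``paper's own proof'' to compare against.  Your sketch is a reasonable outline of a proof in the style of Hanani's original work: the necessity count is standard and correct, and the reduction to $g_0=\gcd(g,6)\in\{1,2,3,6\}$ via inflation by a TD$(3,g/g_0)$ is valid since Latin squares exist for every order.  The four base cases are handled by well-known devices (STS, point deletion from an STS, PBD-inflation), and you are right to flag that the genuine content lies in the existence of $\{3,5\}$- and $\{3,4,5\}$-PBDs together with a handful of sporadic small GDDs---exactly the casework Hanani carries out.

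One small remark: for type $6^8$ you cannot obtain it by inflating a $3$-GDD of type $2^8$ or $3^8$, since neither of those types is admissible (check the divisibility conditions), so ``explicit construction'' really is required there; this is consistent with your honest deferral to \cite{Hanani} for the residual small cases.  Your parenthetical KTS shortcut for $g_0=3$ is also correct and is arguably cleaner than the PBD route, at the cost of invoking a deeper result.
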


\begin{thm}[\cite{Stinson-Kirkframe}]
There exists a Kirkman frame of type $t^u$ if and only if $t$ is even, $u\geq 4$, and 
$t(u-1)\equiv 0 \pmod{3}$.
\end{thm}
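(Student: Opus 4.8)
Suppose a Kirkman frame of type $t^u$ exists, on groups $V_1,\dots,V_u$. Every block lies in some partial resolution class, and such a class is a partition of $V\setminus V_i$ for one of the groups; since a block meets three distinct groups and avoids $V_i$, there must be at least four groups, so $u\ge 4$. A partial class missing $V_i$ partitions the $t(u-1)$ points of $V\setminus V_i$ into triples, so $3\mid t(u-1)$. Finally fix $V_i$ and a point $x$ lying in some $V_j\ne V_i$. The $t(u-1)$ pairs $\{x,y\}$ with $y\notin V_j$ are each covered by a unique block through $x$, each such block accounting for two of them, so $x$ lies in $t(u-1)/2$ blocks and $t(u-1)$ is even; likewise the $t(u-2)$ pairs $\{x,y\}$ with $y\notin V_i\cup V_j$ show that the number of blocks through $x$ contained in $V\setminus V_i$ is $t(u-2)/2$, so $t(u-2)$ is even. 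Subtracting, $t$ is even.

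\textbf{Sufficiency: overall plan.} The plan is to argue by recursion, reducing to a short list of frames built directly. The arithmetic dictates the shape of the argument: when $t\equiv 0\pmod 6$ every $u\ge 4$ is admissible, whereas when $t\equiv 2$ or $4\pmod 6$ one must also have $u\equiv 1\pmod 3$. The first task is to handle the ``unit'' types $2^u$ and $4^u$ (for $u\equiv 1\pmod 3$, $u\ge 4$) and $6^u$ (for $u\ge 4$). For each of these families, finitely many small orders $u$ are produced by hand --- by exhibiting the partial parallel classes directly, or through a starter / difference-family construction over a group of order $tu$, in the spirit of the Room-square starters of Mullin and Vanstone --- and the remaining orders follow from the standard recursive machinery for frames (weighting along a GDD whose block sizes fall in the right congruence class, together with filling), which steps $u$ up within its residue class once enough seeds are in hand.

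\textbf{Sufficiency: scaling $t$.} For a general even $t$ write $t=2m$. Inflate a Kirkman frame of type $2^u$ (when $t\equiv 2$ or $4\pmod 6$) or of type $6^u$ (when $t\equiv 0\pmod 6$): replace each point by $m$ points, and replace each block by the blocks of a resolvable transversal design RTD$(3,m)$, equivalently a TD$(4,m)$. Because the transversal design resolves, each partial resolution class of the master frame splits into $m$ partial classes, so one obtains a Kirkman frame of type $(2m)^u$, respectively $(6m)^u$. This succeeds whenever TD$(4,m)$ exists, that is for all $m\notin\{2,6\}$; via the $2^u$-route this misses only $t=4$, which is already one of the unit types, and via the $6^u$-route only $t\in\{12,36\}$, and one produces the frames of all admissible types $12^u$ and $36^u$ by a direct construction or an alternative weighting.

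\textbf{Main obstacle.} The difficulty is concentrated at the bottom of the recursion: assembling the seed frames of small order, especially in the class $u\equiv 1\pmod 3$ with $t\not\equiv 0\pmod 6$, where hardly any recursive ingredients are available, and then verifying that the inflation and filling steps --- constrained as they are by the known spectra of transversal designs and of small Kirkman frames --- genuinely cover every admissible pair $(t,u)$ with no residual gaps. That last verification is the routine-but-lengthy part, and in practice it is where a computer search for suitable difference families does the work.
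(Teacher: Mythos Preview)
The paper does not prove this theorem; it is quoted from Stinson (1987) as a known result and used as a black box. So there is no in-paper proof to compare against. Your sufficiency outline is a reasonable high-level summary of Stinson's actual strategy (direct constructions for a finite list of seed types, then inflation by resolvable transversal designs and GDD-based recursions), though of course it remains only a sketch.

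Your necessity argument, however, contains a genuine error in the step deriving ``$t$ even.'' You assert that the $t(u-2)$ pairs $\{x,y\}$ with $y\notin V_i\cup V_j$ are covered two-at-a-time by blocks through $x$ lying inside $V\setminus V_i$, yielding $t(u-2)/2$ such blocks. But a block $\{x,y,z\}$ with $y\notin V_i\cup V_j$ and $z\in V_i$ also covers the pair $\{x,y\}$ while meeting $V_i$. There are exactly $t$ blocks through $x$ meeting $V_i$ (one for each point of $V_i$), so the number of blocks through $x$ avoiding $V_i$ is actually $t(u-1)/2 - t = t(u-3)/2$, not $t(u-2)/2$. Integrality of $t(u-1)/2$ and $t(u-3)/2$ together only forces $t$ even \emph{or} $u$ odd, which is insufficient. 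The standard argument genuinely requires the frame resolution: if $r_i$ is the number of partial parallel classes missing $V_i$, then for any $x\in V_j$ one has $\sum_{i\ne j} r_i = t(u-1)/2$, since each block through $x$ lies in exactly one such class. As this holds for every $j$, all $r_i$ coincide, and solving gives $r_i = t/2$; hence $t$ is even. Your argument never invokes the resolution into partial classes, and without it the parity of $t$ cannot be extracted.
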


We are interested in Kirkman frames which contain $3$-GDDs as subdesigns.  
Suppose $\cF$ is a uniform Kirkman frame of type $h^u$ defined on a set $V$ with 
groups $V_i$ for $i=1,2,\dots,u$ and block set $\cB$. Let $\cD$ be a $3$-GDD of 
type $g^u$ defined on a set $W$ with groups $W_i$ for $i=1,2,\dots, u$ and 
block set $\cA$. $\cD$ is a subdesign of $\cF$ if $W_i \subseteq V_i$ for 
$i=1,2,\dots, u$ and $\cA \subseteq \cB$.  
We call $\cF$ a Kirkman frame of \emph{type} $(g;h)^u$ to emphasize the subdesign.  
Note that $\cD$ is a maximum subdesign of $\cF$ if $h=2g$. 
The argument is similar to that in  Section~1, considering an element outside the subdesign, 
and further details in the more general case of IGDDs can be found in \cite{DLL}.

There is a close connection between Kirkman triple systems  
with maximum subdesigns and Kirkman frames with maximum $3$-GDD subdesigns.
Suppose there exists an MK$(2u+1)$ defined on a set $V$ with the subdesign defined 
on $V^{\prime}$, $|V^{\prime}|=u$.  If we delete an element $x \in V\setminus V^{\prime}$, the 
resulting design is a Kirkman frame of type $(1;2)^{u}$.  It is easy to see that adjoining 
a new element to a Kirkman frame of type $(1;2)^{u}$  will give us an MK$(2u+1)$. This 
provides the first examples of Kirkman frames with maximum $3$-GDD subdesigns.

\begin{prop}
An MK$(2u+1)$ is equivalent to a Kirkman frame of type $2^u$ which 
contains as a subdesign a $3$-GDD of type $1^u$.
\end{prop}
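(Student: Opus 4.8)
The plan is to turn the two constructions sketched just above the statement into a pair of mutually inverse maps and to check that each output really does satisfy all the defining conditions, resolvability included.

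\emph{From an MK$(2u+1)$ to a frame.} Let the MK$(2u+1)$ live on a point set $V$ with $|V|=2u+1$ and STS$(u)$ subdesign on $V'\subset V$. Fix $x\in V\setminus V'$ and delete it. Since the ambient design is an STS on $2u+1$ points, $x$ lies in exactly $(2u+1-1)/2=u$ blocks, and any two of them meet only in $x$, so the $u$ blocks through $x$ partition $V\setminus\{x\}$ into $u$ pairs $V_1,\dots,V_u$; take these as the groups of the prospective type $2^u$ frame. The one place where the subdesign hypothesis is genuinely used: a block $\{x,a,b\}$ cannot contain two points of $V'$, since that pair would already be covered inside the subdesign by a block avoiding $x$; combined with the fact that each of the $u$ points of $V'$ lies in exactly one block with $x$, this forces every block through $x$ to meet $V'$ in exactly one point. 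Hence each group has the form $V_i=\{a_i,b_i\}$ with $a_i\in V'$ and $b_i\notin V'\cup\{x\}$, so putting $W_i=\{a_i\}$ displays the STS$(u)$ on $V'$ as a $3$-GDD of type $1^u$ with $W_i\subseteq V_i$ and blocks among those of the frame. For the resolution: each of the $u$ parallel classes of the KTS$(2u+1)$ contains exactly one block through $x$, namely $\{x\}\cup V_i$ for some $i$, and this correspondence between classes and groups is a bijection since each block lies in exactly one class; deleting $x$ turns such a class into a partition of $V\setminus V_i$. These are the partial resolution classes of the frame, one missing each group, which is exactly the frame condition, and a block count ($2u(u-1)/3$ blocks remain) shows nothing is lost.

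\emph{From a frame to an MK$(2u+1)$.} Conversely, let $\cF$ be a Kirkman frame of type $2^u$ on $V$ with groups $V_i=\{a_i,b_i\}$ containing a $3$-GDD subdesign of type $1^u$ on $\{a_1,\dots,a_u\}$; since its groups are singletons, that subdesign is just an STS$(u)$. Adjoin a new point $\infty$ and, for each $i$, append the block $\{\infty\}\cup V_i$ to the partial resolution class of $\cF$ that misses $V_i$. As that class partitions $V\setminus V_i$, the augmented class partitions $V\cup\{\infty\}$, giving $u$ genuine parallel classes on $2u+1$ points. Pairs of points of $V$ from distinct groups are covered once by $\cF$; each within-group pair $\{a_i,b_i\}$ and each pair $\{\infty,y\}$ with $y\in V_i$ is covered once by $\{\infty\}\cup V_i$; so the result is a resolvable STS$(2u+1)$. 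The STS$(u)$ on $\{a_1,\dots,a_u\}$ persists as a subdesign, and because the order of the whole design is $2u+1$ it is a maximum subdesign, i.e.\ we obtain an MK$(2u+1)$. Inspecting the two constructions shows they are inverse to one another.

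I do not expect a real obstacle here: the only subtle point is the matching argument showing that each block through the deleted point meets $V'$ in exactly one point (equivalently, that adjoining $\infty$ does not disturb maximality of the subdesign), together with the routine bookkeeping that the partial classes of the frame and the parallel classes of the MK$(2u+1)$ match up one-to-one. Both are short once the groups have been identified as above.
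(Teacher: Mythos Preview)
Your proof is correct and follows exactly the approach the paper sketches in the paragraph immediately preceding the proposition: delete a point outside the subdesign to pass from the MK$(2u+1)$ to the frame, and adjoin a new point to go back. The paper gives no further detail beyond that sketch, so your write-up simply fills in the routine verifications (that each block through the deleted point meets $V'$ exactly once, and that parallel classes and partial frame classes correspond bijectively) which the paper leaves implicit.
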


Ray-Chaudhuri and Wilson \cite[Theorem 6]{RCWil-school} describe a finite field construction for 
Kirkman triple systems, and the construction inherently produces a maximal sub-STS. 
(This construction is also described in \cite{Stinson-KTS}.)

\begin{lemma}[\cite{RCWil-school}]
\label{lem:prime}
For all prime powers $q\equiv 1\pmod{6}$, there exists an MK$(2q+1)$ or a Kirkman 
frame of type $(1;2)^q$.
\end{lemma}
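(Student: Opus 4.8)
The plan is to recall the finite-field construction of a KTS$(2q+1)$ due to Ray-Chaudhuri and Wilson, to observe directly that it contains an STS$(q)$, and then to pass to the frame version via the Proposition above. Write $q=6t+1$, fix a primitive element $\omega$ of $\mathbb{F}_q$, and set $\zeta:=\omega^{2t}$, a primitive cube root of unity; note $\gcd(q,6)=1$. The point set is $V=(\mathbb{F}_q\times\{1,2\})\cup\{\infty\}$, so that $|V|=2q+1\equiv 3\pmod 6$, and I let $\mathbb{F}_q$ act on $V$ by translating the first coordinate while fixing $\infty$ and the second coordinate. The sub-STS will be supported on $U:=\mathbb{F}_q\times\{1\}$, and in the frame reformulation the $q$ groups are the pairs $\{(x,1),(x,2)\}$, $x\in\mathbb{F}_q$.

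I would take $\cB$ to be the union of the $\mathbb{F}_q$-orbits of three families of base blocks: (i) the $t$ base blocks of a cyclic STS$(q)$ supported on $U$, namely a perfect difference family over $\mathbb{F}_q$, which exists for every prime power $q\equiv 1\pmod 6$; (ii) the single block $\{\infty,(0,1),(0,2)\}$; and (iii) a family of $3t$ mixed triples $\{(a,1),(b,2),(c,2)\}$. Counting pairs shows that $(V,\cB)$ is a Steiner triple system exactly when family (i) covers all $(q-1)/2$ difference classes of $\mathbb{F}_q$ in the first coordinate, the differences $\pm(b-c)$ arising in (iii) cover all $(q-1)/2$ difference classes in the second coordinate, and the $6t$ cross differences $b-a$ and $c-a$ arising in (iii) run over $\mathbb{F}_q^{\ast}$ with each value occurring exactly once (the cross difference $0$ is supplied by (ii)). Since $\gcd(q,6)=1$, no triple is fixed by a nontrivial translation, so every block orbit has full length $q$; hence if the base blocks in (i)--(iii) can in addition be chosen so that they together form a single partition of $V$, then its $\mathbb{F}_q$-translates are $q$ parallel classes and $(V,\cB)$ is a KTS$(2q+1)$. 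Its blocks contained in $U$ are precisely family (i), an STS$(q)$, so $(V,\cB)$ is an MK$(2q+1)$.

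The crux, which is the content of \cite[Theorem~6]{RCWil-school}, is that these requirements can be met at once: the $3t$ mixed triples must realize the prescribed second-coordinate differences and cross differences and, together with (ii) and one representative from each orbit of (i), partition $V$. This is where $q\equiv 1\pmod 6$ is used in an essential way. Concretely, I would organize family (iii) into $t$ orbits under multiplication by $\zeta$, each orbit consisting of a mixed triple and its two $\zeta$-multiples, so that the second-coordinate differences and the cross differences are distributed coset by coset over $\mathbb{F}_q^{\ast}/\langle\zeta\rangle$; the additive parameters $a$ together with the choice of orbit representatives from (i) then provide exactly the freedom needed to enforce the partition condition. Verifying that this can always be arranged is the main obstacle; the remaining difference and parallel-class bookkeeping is routine. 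Finally, since an MK$(2q+1)$ is equivalent to a Kirkman frame of type $2^q$ containing a $3$-GDD of type $1^q$ as a subdesign --- deleting $\infty$ turns the block $\{\infty,(x,1),(x,2)\}$ of each parallel class into an omitted group, so that each group is omitted exactly once --- we likewise obtain a Kirkman frame of type $(1;2)^q$, completing the proof.
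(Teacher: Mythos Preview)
The paper does not prove this lemma; it simply records the citation to \cite[Theorem~6]{RCWil-school} and remarks that the Ray-Chaudhuri--Wilson construction ``inherently produces a maximal sub-STS.'' Your proposal therefore already goes further than the paper by outlining what such a construction must look like: $t$ base triples forming a cyclic STS$(q)$ on level~$1$, one block through $\infty$, and $3t$ mixed base triples of shape $\{(a,1),(b,2),(c,2)\}$, all developed additively over $\mathbb{F}_q$. The block counts and the difference conditions you list are correct, and the final passage to a Kirkman frame of type $(1;2)^q$ via the preceding Proposition is exactly how the paper relates the two formulations.

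That said, your sketch halts at the same point the paper does: you label the simultaneous realization of the difference and partition constraints ``the main obstacle'' and defer it to \cite{RCWil-school}. One caution on presentation: the phrase ``the additive parameters $a$ together with the choice of orbit representatives from (i) then provide exactly the freedom needed'' suggests an existential degrees-of-freedom argument, whereas the Ray-Chaudhuri--Wilson proof is an explicit construction in which specific powers of a primitive element $\omega$ are written down for every base block and the required properties are verified by a direct cyclotomic calculation (the STS$(q)$ is the Netto system, and the mixed triples are likewise given by fixed cosets of $\langle\omega^{2t}\rangle$). If you intend to reproduce rather than merely cite the result, it would be cleaner to state those base blocks explicitly and check the conditions, rather than invoke unspecified freedom.
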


\subsection{Starters and cyclic designs}

In \cite{MV}, Mullin and Vanstone describe an algebraic construction for MK$(2u+1)$
 which uses  the Mullin-Nemeth strong starter for Room squares. (See \cite{Handbook,Bluebk} 
for surveys on Room squares.)
The underlying idea is to adjoin $u$ new elements to the pairs in a Room square of side $u$ to 
form triples and then complete the design by including  the blocks of a Steiner triple 
system on the set of  $u$ new elements. 
This idea can be extended to construct both Kirkman triple systems and  Kirkman frames having maximum subdesigns.
 
Let $G$ be an additive abelian group of order $g$, and let $H \le G$ be a subgroup of order $h$, where $g-h$ is even. A \emph{frame starter} in $G \setminus H$ is a set of 
unordered pairs $S = \{ \{s_i,t_i\} :  1 \leq i \leq (g-h)/2 \}$ such that

\begin{itemize}[topsep=0pt]
\item[(1)] $ \{s_i \,:\, 1\leq i\leq (g-h)/2\} \cup \{t_i \,:\, 1\leq i\leq (g-h)/2\} =G\setminus H$. 
\item[(2)] $\{\pm(s_i -t_i) : 1\leq i \leq (g-h)/2) \} = G \setminus H$
\end{itemize}
  
If $S$ has the additional property that  $s_i+ t_i =s_j +t_j$ implies $i=j$ and 
 $s_i + t_i \notin  H$ for all $i$, then $S$ is called a \emph{strong starter}.

An \emph{adder} for $S$ is an injective mapping $A: S \rightarrow G\setminus H$ such 
that $ \{ s_i + a_i : i=1,2,\dots (g-h)/2\}  \cup 
\{ t_i + a_i   : i=1,2,\dots,(g-h)/2 \} = G\setminus H$ where $A(s_i,t_i)=a_i$.
A strong starter has the property that $A = \{ -(s_i + t_i) : i=1,2,\dots,(g-h)/2\}$.
A frame starter in $G\setminus H$ and a corresponding adder generate a uniform Room 
frame of type $h^{g/h}$.  
Note that when $H= \{0\}$ and $|H|=h=1$,  a frame starter is known as a starter. 
Starters and adders can be used \cite{Bluebk} to construct Room squares, or equivalently Room frames of type $1^n$.

Let $V= G\times \{0,1\}$. A starter for a Kirkman frame of type $(h;2h)^{(g/h)}$ 
in $(G\setminus H) \times \{0,1\}$ is a set of triples,  
$S_1= \{ \{ (x_i,1), (s_i,0), (t_i,0) \} : i=1,2,\dots,(g-h)/2 \}  \cup T$,  with the following properties:
\begin{itemize}
\item[(1)] $S =\{ \{ (s_i,0), (t_i,0) \} : 1 \leq i \leq (g-h)/2 \}$ is a frame starter in
 $(G\setminus H)  \times \{0\}$.

\item[(2)]  Every element of $G \setminus H$ occurs precisely once in 
the set $ \{ s_i -x_i: i=1,2,\dots,(g-h)/2\} \cup \{ t_i -x_i : i=1,2,\dots (g-h)/2 \}.$

\item[(3)] $T =\{T_i : i=1,2,\dots, (g-h)/6\}$ is a set of 
base blocks in $(G\setminus H) \times \{1\}$ for a $3$-GDD of type $h^{g/h}$.

\item[(4)] Every element of  $(G \setminus H) \times \{0,1\}$ occurs precisely once in $S_1$. 
\end{itemize}

After developing under the group $G$, the starter
$S_1$ generates a Kirkman frame of type $(2h)^{(g/h)}$ which 
contains as a subdesign a $3$-GDD of type $h^{(g/h)}$ on $V$.
Note that in the case that the underlying frame starter $S$ is a strong 
starter, we have $x_i = -a_i = s_i + t_i$.  

The following example provides a simple illustration of this construction. 

\begin{ex}
The set $S = \{ \{ 4,6\}, \{1,5\}, \{2,3\} \}$ is a strong starter over $\Z_{7}$, and therefore 
a corresponding adder is  $A=\{4,1,2\}$.   
The resulting Kirkman frame of type $(1;2)^7$ defined on $\Z_7 \times \{0,1\}$ is generated by
$$S_1= \{ \{3_1, 4_0,6_0\}, \{6_1,1_0,5_0\},\{5_1,2_0,3_0\},\{1_1,2_1.4_1\}\},$$
where for convenience we abbreviate $(y,i)$ by $y_i$ for $y \in \Z_7$ and $i=0,1$.  
\end{ex}

\subsection{Howell designs}

Howell designs \cite{Bluebk}  can also be used  to construct Kirkman triple systems with 
subdesigns. This construction produces designs with STS and KTS subdesigns which are 
disjoint. It is our first direct construction where the STS subdesign is not necessarily 
maximal. Our method is similar to that used for Room squares; 
we  adjoin a new set of $u$ elements to a Howell design  of side $u$  along the  rows (or columns) and then use the blocks of an STS$(u)$ on the new set of elements 
to form resolution classes. Since the Howell design 
is missing pairs, an additional step is needed to  form triples to cover these `missing' pairs and 
combine these with blocks of the STS$(u)$ to form additional resolution classes. 

In particular, suppose $u\equiv w \equiv 3 \pmod{6}$.  We describe a construction for a KTS$(2u+w)$ defined 
on a set $U\times \{0,1\} \cup W$ where $|U|=u$ and $|W|=w$.
Let $A=U \times\{0\}$  and $B=U\times\{1\}$. 
The first step is to construct an H$(u,u+w)$ on $A \cup W$ with the properties that it 
is missing all pairs in $W$ and all pairs in $\frac{w-1}{2}$ parallel classes of triples on $A$.
Next, adjoin  $B$ as a set of $u$ new elements along the rows (or columns) to form triples. 
Each column (respectively row) will be missing $\frac{u-w}{2}$ elements from $B$; these are used 
to construct $\frac{u-w}{6}$ triples from an STS$(u)$ defined on $B$. 
A count of pairs and triples tells us that we have the right number of 
triples remaining to construct $\frac{w-1}{2}$ parallel classes on $A$ and 
$\frac{w-1}{2}$ parallel classes  on $B$. These are  combined with 
a KTS$(w)$  defined on $W$ to provide $\frac{w-1}{2}$ additional classes.
The resulting design will be a KTS$(2u+w)$ 
which contains as disjoint subdesigns an STS$(u)$ defined on $B$ and a KTS$(w)$ defined on $W$.  

We give two examples to illustrate this method. These examples will be used later in 
recursive constructions.

\begin{ex}
\label{ex:21}
A KTS$(21)$ with a sub-STS$(9)$ can be constructed by adjoining 9 new elements, 
$0_0, 0_1,0_2, 1_0,1_1,1_2, 3_0,3_1,3_2$, to the nonempty cells of the rows 
of an H$(9,12)$ as indicated  in Figure~\ref{howell}.  
 The triples in the last row complete the nine 
resolution classes given by the columns.   The last resolution class consists 
of blocks of the form $\{i_0,i_1,i_2\}$ for $i=0,1,\dots,6$.
The resolution classes of this design are listed explicitly in the Appendix.  
In Lemma~\ref{smallcases}, we describe an alternative way of viewing this interesting design.
\end{ex}

\begin{figure}[htbp]
\small
$$
\begin{array}{l|c|c|c|c|c|c|c|c|c|} \cline{2-10}
0_0 &4_25_1 &             & 6_02_0 & 4_15_2 &             &4_05_0 &              & 6_22_1 & 6_12_2 \\
\cline{2-10}
0_1 &            & 4_15_1 & 4_25_0 & 6_02_2 & 6_12_1 &            & 6_22_0 &              & 4_05_2 \\
\cline{2-10}
0_2 & 6_22_2 & 6_02_1 &             &             & 4_05_1 & 6_12_0 & 4_15_0 & 4_25_2 &              \\   \cline{2-10}
1_0 &             & 2_04_0 & 2_24_1 & 2_14_2 &             & 5_16_2 & 5_26_1 & 5_06_0 &              \\
\cline{2-10}

1_1 &2_14_1 & 5_06_2 &              & 5_16_1 & 5_26_0 &               &             & 2_24_0 & 2_04_2\\ \cline{2-10}

1_2 &5_06_1 &              & 5_26_2 &             & 2_04_1 & 2_24_2 & 2_14_0 &               & 5_16_0 \\ \cline{2-10}

3_0 &4_06_0 & 4_26_1 &              & 2_05_0 &              & 2_15_2 & 2_25_1 &              &4_16_2 \\\cline{2-10}

3_1 &2_05_2 &              & 2_15_1 & 4_06_2 & 2_25_0 &               & 4_26_0 & 4_16_1 &            \\\cline{2-10}

 3_2 &           & 2_25_2 & 4_06_1 &              & 4_26_2 & 4_16_0 &               &2_05_1 & 2_15_0 \\\cline{2-10} 
\multicolumn{1}{c}{} & 
\multicolumn{1}{c}{0_11_03_2} & 
\multicolumn{1}{c}{0_01_23_1} & 
\multicolumn{1}{c}{0_21_13_0} & 
\multicolumn{1}{c}{0_21_23_2} & 
\multicolumn{1}{c}{0_01_03_0} & 
\multicolumn{1}{c}{0_11_13_1} & 
\multicolumn{1}{c}{0_01_13_2} & 
\multicolumn{1}{c}{0_11_23_0} & 
\multicolumn{1}{c}{0_21_03_1} 
\end{array}
$$
\medskip
\caption{A Howell design H$(9,12)$ equivalent to a KTS$(21)$ with sub-STS$(9)$}
\label{howell}
\end{figure}

\normalsize

\medskip

\begin{ex}
\label{ex:75}
A KTS$(75)$ which contains a sub-STS$(33)$ and a disjoint sub-KTS$(9)$ can be constructed from an H$(33,42)$ and a cyclically-generated 
STS$(33)$ with short orbit blocks for difference $11$.  Here, the `extra' 4 resolution classes are constructed from the short orbit and 
a carefully chosen full orbit which partitions into 3 parallel classes of the STS$(33)$. 
\end{ex}

\rk
The first of these is simple enough to produce by hand; however, the latter is found with a relatively fast hill climbing algorithm.  For our results to follow, Examples~\ref{ex:21} and \ref{ex:75} are sufficient.  But we remark that larger examples of a similar structure can be found with more computing resources.  In general, hill climbing algorithms are known to be efficient methods for constructing strong starters, \cite{DinStin-hill},  Steiner triple systems,
\cite{Stinson-hill}, and Howell designs, \cite{DinStin-howellhill}.   Hill climbing 
is the basis for most of our direct computer constructions in the Appendix.

\section{Kirkman frames with maximum $3$-GDD subdesigns}
\label{sec:frames}

In this section, we collect  constructions and existence results for Kirkman frames with 
maximum $3$-GDD subdesgins.  
First, we observe that the direct product construction for frames \cite{Stinson-Kirkframe}
can be applied to inflate the size of a frame and its subdesign.

\begin{prop}
\label{Kfr-dp}
If there exists a Kirkman frame of type $(g;2g)^n$ and a resolvable TD$(3,m)$, 
then there exists a Kirkman frame of type $(gm;2gm)^n$.
\end{prop}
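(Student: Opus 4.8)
The plan is to use the standard inflation-by-a-transversal-design (direct product) argument, carefully tracking the embedded $3$-GDD. Let $\cF$ be a Kirkman frame of type $(g;2g)^n$, with point set $V=V_1\cup\dots\cup V_n$ (each $|V_i|=2g$), block set $\cB$ partitioned into partial parallel classes, and let $\cD$ be its subdesign: a $3$-GDD of type $g^n$ on $W=W_1\cup\dots\cup W_n$ with $W_i\subseteq V_i$, $|W_i|=g$, and block set $\cA\subseteq\cB$. Fix an $m$-set $I$ and take the new point set to be $V\times I$, with inflated groups $V_i\times I$ of size $2gm$. For each block $B=\{x,y,z\}\in\cB$, I would place on $B\times I$ a copy of a resolvable $\mathrm{TD}(3,m)$ whose three groups are $\{x\}\times I$, $\{y\}\times I$, $\{z\}\times I$; this contributes $m^2$ blocks, grouped into $m$ parallel classes of $B\times I$. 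Let $\cB^\ast$ be the union of all these blocks over $B\in\cB$, and let $\cA^\ast$ be the sub-collection coming only from blocks $B\in\cA$.

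First I would check that $(V\times I,\{V_i\times I\}_i,\cB^\ast)$ is a Kirkman frame of type $(2gm)^n$. Every block placed on $B\times I$ is a transversal of $\{x\}\times I,\{y\}\times I,\{z\}\times I$, so it meets each $V_i\times I$ in at most one point, since $B$ met each $V_i$ in at most one point. A pair $\{(p,a),(q,b)\}$ with $p\in V_i$, $q\in V_j$ and $i\ne j$ lies in a unique block of $\cB^\ast$: the points $p,q$ lie in a unique block $B\in\cB$; within the copy of $\mathrm{TD}(3,m)$ on $B\times I$ this pair lies in exactly one block; and no copy placed on a block $B'\ne B$ can contain it, since $B'$ contains at most one of $p,q$. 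For the resolution: if $P\subseteq\cB$ is a partial parallel class missing $V_i$, so $P$ partitions $V\setminus V_i$, then for each $k\in\{1,\dots,m\}$ I take the $k$-th parallel class of the $\mathrm{TD}(3,m)$ on $B\times I$ for every $B\in P$; their union is a partition of $(V\setminus V_i)\times I=(V\times I)\setminus(V_i\times I)$. Thus each partial parallel class of $\cF$ spawns $m$ partial parallel classes, and since every block of $\cF$ lies in exactly one such class, $\cB^\ast$ is partitioned into partial parallel classes of the required type.

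Next I would verify that $\cD^\ast=(W\times I,\{W_i\times I\}_i,\cA^\ast)$ is a $3$-GDD of type $(gm)^n$ sitting inside this frame as a subdesign. Each block of $\cA^\ast$ comes from a $\mathrm{TD}(3,m)$ on $A\times I$ with $A\in\cA$; since $A$ met each $W_i$ in at most one point, such a block meets each $W_i\times I$ in at most one point, and it is contained in $W\times I$. A pair $\{(p,a),(q,b)\}$ with $p\in W_i$, $q\in W_j$, $i\ne j$, lies in a unique block of $\cA^\ast$: $p,q$ lie in a unique block $A\in\cA$ because $\cD$ is a $3$-GDD, and within the $\mathrm{TD}(3,m)$ on $A\times I$ they lie in exactly one block. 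Finally $\cA^\ast\subseteq\cB^\ast$ and $W_i\times I\subseteq V_i\times I$, so $\cD^\ast$ is a subdesign of the frame, and it is maximum because $2gm=2(gm)$; hence the frame has type $(gm;2gm)^n$.

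The construction is essentially forced, so I do not expect a genuine obstacle. The only things needing care are the two bookkeeping points above: that within each inflated block-set the transversal property yields unique pair coverage simultaneously ``in the frame'' (pairs across the $V_i$) and ``in the subdesign'' (pairs across the $W_i$), and that the $k$-th parallel classes of the copies of the $\mathrm{RTD}(3,m)$ glue together across a partial parallel class of $\cF$ to partition $(V\setminus V_i)\times I$. This is just the frame analogue of Wilson's fundamental construction with the constant weight $m$ on every point, and it reduces to the direct product for Kirkman frames in \cite{Stinson-Kirkframe} once one forgets the subdesign.
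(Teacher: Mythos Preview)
Your proof is correct and follows exactly the approach the paper intends: the paper states this proposition without a detailed proof, merely noting that the direct product construction for frames from \cite{Stinson-Kirkframe} inflates both the frame and its subdesign. Your write-up spells out precisely this standard inflation argument (constant weight $m$, replacing blocks by copies of an $\mathrm{RTD}(3,m)$), correctly tracking the inflated sub-GDD and the partial resolution classes.
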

 
Two other well known  recursive constructions for  frames, 
`filling in the holes' and `pulling out a group', can also be used to construct Kirkman frames with maximum $3$-GDDs, 
\cite{Stinson-Kirkframe}. We will use the following result.

\begin{prop}
\label{Kfr-addgroup}
If there exists a Kirkman frame of type $(gm;2gm)^n$ and a
Kirkman frame of type $(g;2g)^{m+1}$, then there is a Kirkman frame of type 
$(g;2g)^{mn+1}$.   
\end{prop}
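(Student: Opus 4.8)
The plan is to use a standard "filling in the holes" argument applied to a Kirkman frame with a GDD subsystem, taking care that the subdesign is handled consistently. Start with a Kirkman frame $\cF$ of type $(gm;2gm)^n$ on a point set $V=V_1\cup\dots\cup V_n$ with $|V_i|=2gm$, together with its subdesign $\cD$, a $3$-GDD of type $(gm)^n$ on $W=W_1\cup\dots\cup W_n$ with $W_i\subseteq V_i$ and $|W_i|=gm$. Adjoin a single new point $\infty$, and set $V^*=V\cup\{\infty\}$. For each $i$, we will fill the "hole" $V_i\cup\{\infty\}$ (which has $2gm+1$ points) using a copy of a Kirkman frame of type $(g;2g)^{m+1}$: think of $V_i\cup\{\infty\}$ as $(2gm+1)$ points partitioned into $m$ blocks of $2g$ points plus the singleton $\{\infty\}$, wait — that doesn't balance, so instead we fill using the frame of type $(g;2g)^{m+1}$ on a set of $2g(m+1)$ points. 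The cleaner way: on the $2gm$ points of $V_i$ together with a set of $2g$ "global" points that will be shared across all holes, place a Kirkman frame of type $(g;2g)^{m+1}$, where the last group of size $2g$ is the common set; this common set, of size $2g$, is the same for every $i$, so after combining all $n$ fill-ins it becomes a group on which we separately place a $\mathrm{KTS}(2g+1)$-type completion using the extra point $\infty$. This is exactly the shape of the recursive construction in \cite{Stinson-Kirkframe}, now threaded through the subdesign.

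Concretely, the key steps are: (i) Let $X$ be a fixed set of $2g$ new points, with a distinguished subset $Y\subseteq X$ of size $g$. For each $i=1,\dots,n$, place on $V_i\cup X$ a Kirkman frame $\cF_i$ of type $(g;2g)^{m+1}$ whose $m+1$ groups are $X$ together with an arbitrary partition of $V_i$ into $m$ sets of size $2g$; arrange this so that the $3$-GDD subdesign $\cD_i\subseteq\cF_i$ of type $g^{m+1}$ lives on $Y$ together with a partition of $W_i$ into $m$ sets of size $g$ — possible by first choosing the partition of $V_i$ so that it refines the sub/superset structure $W_i\subseteq V_i$, then applying the hypothesized frame with its built-in subdesign. (ii) The partial parallel classes of $\cF_i$ that miss a group contained in $V_i$ are already partial parallel classes on $(V\cup X)\setminus(\text{that group})$ — they extend to partial parallel classes of the target type $(g;2g)^{mn+1}$ by leaving them as is, since that missing group is one of the $mn$ small groups of the output frame. (iii) The partial parallel classes of $\cF_i$ that miss the group $X$ need to be combined across the construction: for each $i$ these are $\frac{g(m+1)\cdot? }{}$ — more precisely, stack them with the partial resolution classes of the original frame $\cF$ that miss $V_i$. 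Here is the crucial bookkeeping: $\cF$ restricted to avoiding $V_i$ gives partial parallel classes on $V\setminus V_i$; $\cF_i$ restricted to avoiding $X$ gives partial parallel classes on $(V_i\cup X)\setminus X=V_i$; a parallel class on $V_i$ plus a partial parallel class on $V\setminus V_i$ is a partial parallel class on $V\setminus\{\text{nothing}\}$... so these merge into partial parallel classes missing only the group $X$ of the output. (iv) Finally, on $X\cup\{\infty\}$, which has $2g+1$ points, place a $\mathrm{KTS}(2g+1)$ containing the $\mathrm{STS}(g)$ on $Y\cup\{\infty\}$ — i.e. an $\mathrm{MK}(2g+1)$ — oh, but that need not exist for all $g$; the honest version is to place there whatever is needed so that together with the leftover partial classes from the $\cF_i$'s the group $X$ is resolved. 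Counting shows the classes missing $X$ coming from the $\cF_i$'s, together with the classes of a $\mathrm{KTS}$ structure on $X\cup\{\infty\}$, assemble into the partial parallel classes of the output frame that miss the single group $X\cup\{\infty\}$... but the output is supposed to be a frame of type $(g;2g)^{mn+1}$, so the $(mn+1)$-st group is $X\cup\{\infty\}$ of size $2g+1\ne 2g$ — that mismatch means the construction must instead put $\infty$ nowhere and let $X$ itself be the last group, filling it with the $\mathrm{KTS}(w)$-free approach of pulling resolution classes; I would follow \cite{Stinson-Kirkframe} precisely here to get the group sizes right.

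The main obstacle is exactly this last bookkeeping step: making the group sizes and the number of partial parallel classes match up so that the leftover classes "missing $X$" from the $n$ local frames $\cF_i$ can be bundled — together with a resolvable structure on $X$ — into precisely the partial parallel classes of a Kirkman frame that omit one group of size $2g$, while simultaneously verifying that the $3$-GDD subdesigns $\cD_i$ glue along the common subset $Y$ into a single $3$-GDD of type $g^{mn+1}$ sitting inside the output frame. Concretely I would: verify the pair-count (every pair from distinct output groups is covered exactly once — pairs inside a single $V_i$ by $\cF_i$, pairs across $V_i,V_j$ by $\cF$, pairs involving $X$ by the appropriate $\cF_i$), verify the resolution count ($\frac{(2g)(mn)}{} $ partial classes of each of the $mn+1$ types), and verify the subdesign by restricting the entire argument to $W\cup Y$, where $\cF_i\!\mid_{W_i\cup Y}=\cD_i$ is a $3$-GDD of type $g^{m+1}$ and $\cF\!\mid_{W}=\cD$ is a $3$-GDD of type $(gm)^n$, so the same filling-in merges them into a $3$-GDD of type $g^{mn+1}$. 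Once the group-size arithmetic is pinned down as in the cited reference, the rest is routine verification.
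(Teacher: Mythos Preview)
Your overall plan is the right one, and it matches the standard ``filling in the holes'' technique the paper invokes (the paper itself gives no proof, simply citing \cite{Stinson-Kirkframe}). You correctly identify the key move: adjoin a fixed set $X$ of $2g$ new points with a distinguished subset $Y$ of size $g$, and on each $V_i\cup X$ place a Kirkman frame $\cF_i$ of type $(g;2g)^{m+1}$ whose special group is $X$ and whose sub-GDD sits on $W_i\cup Y$. Your subdesign verification at the end is also correct.

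However, your assembly of the partial parallel classes is genuinely wrong, and the detour through an extra point $\infty$ and a KTS$(2g+1)$ is unnecessary noise you never fully discard. In step (ii) you claim that a class of $\cF_i$ missing some $G\subset V_i$ is ``already a partial parallel class on $(V\cup X)\setminus G$''; it is not---it lives only on $(V_i\cup X)\setminus G$ and misses all of $V\setminus V_i$. In step (iii) you pair the $\cF_i$-classes missing $X$ with the big-frame classes missing $V_i$; the counts cannot match (there are $g$ of the former and $gm$ of the latter), and the big-frame classes are being spent in the wrong place.

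The correct bookkeeping swaps the roles in (ii) and (iii). For each of the $m$ groups $G\subset V_i$ of $\cF_i$, pair each of the $g$ classes of $\cF_i$ missing $G$ (which cover $(V_i\cup X)\setminus G$) with one of the $gm$ classes of $\cF$ missing $V_i$ (which cover $V\setminus V_i$); together these cover $(V\cup X)\setminus G$, and over the $m$ choices of $G$ this consumes all $gm$ big-frame classes for that hole. For the group $X$, simply take, for each $j=1,\dots,g$, the union over $i$ of the $j$th class of $\cF_i$ missing $X$; each such union covers $V=(V\cup X)\setminus X$. This yields exactly $g$ output classes missing each of the $mn+1$ groups, with no point $\infty$ and no auxiliary KTS required. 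Once you have this pairing straight, the rest of your outline goes through.
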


It is well known that Wilson's Fundamental Construction (WFC)  can be used to construct 
Kirkman frames, \cite{Stinson-Kirkframe}. We  apply  \cite[Construction 4.4]{MSVW-frames} 
to show that applying WFC with Kirkman frames with maximum $3$-GDD subdesigns 
as ingredients will result in a Kirkman frame with a maximum $3$-GDD subdesign.

\begin{thm}
\label{PBD-frames}
If there exists a PBD$(v,K)$ such that for every $k \in K$ there exists 
a Kirkman frame of type $(g;2g)^k$, then there exists a Kirkman frame of 
type $(g;2g)^v$.
\end{thm}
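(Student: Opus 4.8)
The plan is to run Wilson's Fundamental Construction (WFC) on the given PBD, using as ingredient GDDs the Kirkman frames with maximum $3$-GDD subdesigns guaranteed by hypothesis, and to verify that the frame structure and the subdesign structure both propagate through the construction. First I would take the PBD$(v,K)$, regard it as a GDD of type $1^v$, and inflate every point by a factor of $g$ (equivalently, assign each point weight $g$). On each block $B$ of the PBD, of size $k=|B|\in K$, I would place a copy of a Kirkman frame of type $(g;2g)^k$, with its $k$ groups indexed by the $k$ inflated points of $B$ and its $3$-GDD subdesign of type $g^k$ sitting on the sub-point-sets of size $g$ inside those groups. Following \cite[Construction 4.4]{MSVW-frames}, the union of all blocks coming from all PBD-blocks forms a $3$-GDD on $v$ groups of size $2g$; the point is that partial resolution classes glue correctly. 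Specifically, for a fixed point $x$ of the PBD, the PBD-blocks through $x$ partition the remaining $v-1$ points, and within each such block $B$ the ingredient frame supplies partial resolution classes missing the group indexed by $x$, i.e.\ partitions of the other $(k-1)\cdot 2g$ inflated points of $B$; concatenating one such class from each block through $x$ yields a partition of all $(v-1)\cdot 2g$ points outside the group of $x$. Doing this consistently for every $x$ exhibits the output $3$-GDD of type $(2g)^v$ as a Kirkman frame.

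The second half is to check the subdesign. Each ingredient frame of type $(g;2g)^k$ contains a $3$-GDD of type $g^k$ on the ``lower half'' of each group, and — crucially — this sub-$3$-GDD's blocks are among the frame's blocks and its partial resolution classes are restrictions of the frame's partial resolution classes (this is exactly the notion of subdesign recorded before Lemma~\ref{lem:prime}). Restricting the WFC output to the union of the size-$g$ subgroups over all $v$ groups, one sees that the surviving blocks are precisely those coming from the sub-$3$-GDDs of the ingredient frames, placed on the blocks of the same PBD$(v,K)$; by the standard WFC/Wilson argument applied now to $3$-GDDs (rather than frames), this is a $3$-GDD of type $g^v$. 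Hence the output is a Kirkman frame of type $(g;2g)^v$, as claimed. The bookkeeping that needs care is that the \emph{same} global labeling of sub-groups inside groups is used across all ingredient copies, so that the restriction is coherent block-by-block and class-by-class; but this is a matter of fixing, once and for all, an identification of the $g$ ``sub-points'' in each inflated PBD-point, and then using the ingredient frames compatibly with that identification.

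The main obstacle — and really the only nontrivial point — is verifying that the partial resolution classes assemble correctly at every missing group simultaneously, i.e.\ that one can choose the matching of ingredient-frame partial classes to PBD-blocks so that for each PBD-point $x$ the pieces concatenate into a single partition of the complement of the group of $x$. This is the content of \cite[Construction 4.4]{MSVW-frames}, and the argument is the usual one for frame constructions via WFC (cf.\ \cite{Stinson-Kirkframe}): because a frame supplies, for each of its groups, a \emph{set} of partial classes whose union is the full block set, one can match them up across the blocks of the PBD through $x$ so the totals agree; the counting works out precisely because the PBD-blocks through $x$ partition $V\setminus\{x\}$. Everything else — that no pair is covered twice, that groups meet blocks in at most one point, that the subdesign restriction is exactly the sub-$3$-GDDs — is immediate from the defining properties of PBDs, GDDs, and the ingredient frames, together with the fact that distinct PBD-blocks meet in at most one point so their inflated block sets interact only through shared groups.
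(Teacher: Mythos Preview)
Your approach is correct and essentially identical to the paper's: both run Wilson's Fundamental Construction on the PBD, replacing each block by an ingredient Kirkman frame with sub-GDD, and then observe that the sub-GDDs align coherently on a fixed ``half'' of each inflated group to yield the output sub-GDD of type $g^v$. Two minor slips worth fixing: the weight given to each PBD point should be $2g$, not $g$ (your subsequent description of size-$2g$ groups with size-$g$ sub-point-sets makes the intended meaning clear), and the paper's notion of subdesign requires only $\cA\subseteq\cB$, not that partial resolution classes restrict---so that extra claim is both unneeded and not quite what the definition says, though it does no harm to the argument.
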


\begin{proof} We briefly sketch the proof to indicate the placement of the 
$3$-GDD subdesign. 
Suppose $(X,\cB)$ is a PBD$(v,K)$ where $|X|=v$.  We construct 
a Kirkman frame of type $(g;2g)^v$ defined on 
$X \times (\{1,2,\dots,g\} \cup \{1^{\prime}, 2^{\prime},\dots,g^{\prime}\})$ 
as follows. 
Let $B= \{a_1,a_2,\dots,a_k\}$ be a block of the PBD. Replace each block $B \in \cB$ 
with a Kirkman frame of type $(g;2g)^k$ defined on the set 
$B \times (\{1,2,\dots,g\} \cup \{1^{\prime}, 2^{\prime},\dots,g^{\prime}\})$ 
where the holes (groups) of the frame are $a_i \times 
(\{1,2,\dots,g\} \cup \{1^{\prime}, 2^{\prime},\dots,g^{\prime}\})$ for $i=1,2,\dots,k$ 
and the groups of the $3$-GDD are $a_i \times \{1^{\prime}, 2^{\prime},\dots,g^{\prime}\})$
for $i=1,2,\dots,k$. Note that the resulting $3$-GDD of type $g^v$ has 
groups $x \times \{1^{\prime}, 2^{\prime}, \dots, g^{\prime}\}$ for $x \in X$.
\end{proof}

Theorem ~\ref{PBD-frames} shows that Kirkman frames of type $(g;2g)^n$ are 
PBD-closed on the number of groups, $n$.  Kirkman frames of type $(3;6)^n$ 
can be used to construct KTS with maximum STS subdesigns.
A necessary condition for the existence of  these frames 
is $n\equiv 1\pmod{2}$.  We construct Kirkman frames of 
type $(3;6)^n$ for $n\geq 5$ and $n\equiv 1\pmod{2}$ with a small number of 
possible exceptions for $n$ using PBD-closure for the set $\{5,7,9\}$ and direct 
constructions for small values. We start with the current PBD-closure result.

\begin{thm}[see IV.3.23 in \cite{Handbook}]
\label{pbd-579}
There exists a PBD$(v,\{5,7,9\})$ for $v\geq 5$, $v\equiv 1 \pmod{2}$ 
with the definite exceptions  
$v \in \{11,13,15,17,19,23,27,29,31,33,39\}$ and 
the possible exceptions  
$v\in \{ 43,51,59,71,75,83,87,95,99,107,111,113,115,119,139,179\}.$
\end{thm}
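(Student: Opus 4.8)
The plan is to combine the standard recursive machinery for pairwise balanced designs with a finite census of small orders; the full bookkeeping is the one catalogued as IV.3.23 in \cite{Handbook}, so here I only indicate its shape. First I would record the necessary conditions: with $K=\{5,7,9\}$ one has $\gcd\{k-1:k\in K\}=\gcd\{4,6,8\}=2$ and $\gcd\{k(k-1):k\in K\}=\gcd\{20,42,72\}=2$, so the sole congruential obstruction is $v\equiv 1\pmod{2}$. Hence every odd $v\ge 5$ is admissible, and the content of the statement is the sufficiency together with the identification of the (possible) exceptions.

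The backbone is $K$-PBD-closure driven by Wilson's Fundamental Construction. Starting from a transversal design TD$(k,m)$ (equivalently $k-2$ MOLS of order $m$), one truncates some of its groups to sizes chosen from a short admissible list, obtaining a GDD whose block sizes can be forced to lie in $K$ by a careful choice of the truncation parameters; assigning unit weight to every point and then filling in the groups (optionally with one common adjoined point) using PBD$(m+1,K)$ and PBD$(g,K)$ ingredients yields PBD$(v,K)$ for $v$ ranging over a union of residue classes modulo the relevant small moduli. Iterating this with the known MOLS tables (for instance TD$(k,m)$ with $k\le 10$, see \cite{Handbook}) and with the Steiner systems $S(2,5,v)$, $S(2,7,v)$ and $S(2,9,v)$ as seeds, one shows that PBD$(v,K)$ exists for all sufficiently large odd $v$, while tracking precisely which residue classes get covered so that no admissible order is lost.

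It then remains to treat the bounded range of small odd orders. For the definite exceptions $v\in\{11,13,15,17,19,23,27,29,31,33,39\}$ I would prove nonexistence directly, using elementary counting (Fisher-type) inequalities where they suffice and exhaustive search otherwise; for the remaining small orders one exhibits explicit designs, typically by cyclic or GDD-based direct constructions. The main obstacle is the perennial one for PBD spectra: the recursion is only as powerful as its ingredient designs, and forcing every residue class to be covered demands a delicate case analysis in which a handful of orders, namely $43,51,59,71,75,83,87,95,99,107,111,113,115,119,139,179$, invariably require under every available decomposition an auxiliary design (a set of MOLS of an awkward order, or a GDD of an awkward type) whose own existence is presently open; these therefore survive as possible exceptions rather than being settled.
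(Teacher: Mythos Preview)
The paper does not prove this theorem at all; it is quoted verbatim from \cite[IV.3.23]{Handbook} as a known result and used as input to the subsequent constructions. Your sketch correctly describes the standard machinery behind such PBD spectrum tables --- Wilson's Fundamental Construction applied to truncated transversal designs, filling groups with smaller ingredient PBDs, and a direct census of small orders --- and your identification of the necessary condition $v$ odd (via the local partition of $v-1$ points into parts of even sizes $4,6,8$) is sound. Since the paper itself offers no argument to compare against, your outline is an appropriate stand-in; the only caveat is that the ``definite exceptions'' are typically shown impossible by short case analyses (e.g.\ a block of size $9$ in a PBD$(11,\{5,7,9\})$ leaves too few points, and so on down) rather than by Fisher-type inequalities, which do not bite here.
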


\begin{lemma}
\label{Kfr-small}
There exist Kirkman frames of type $(3;6)^n$ for each $n\in 
\{5,7,9,11,13,15,17, 19,23,27$, $29,31,33,39,43,51,59,71,75,95,99, 113,115,119\}.$
\end{lemma}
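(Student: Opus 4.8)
The plan is to prove Lemma~\ref{Kfr-small} by combining the two tools developed earlier in this section: PBD-closure (Theorem~\ref{PBD-frames}) and direct constructions via the cyclic/starter machinery of Section~2.2. First I would partition the required list of values of $n$ into those that are handled recursively and those that must be built directly. Since Theorem~\ref{PBD-frames} shows Kirkman frames of type $(3;6)^n$ are PBD-closed on $n$, once I have frames for $n\in\{5,7,9\}$ every $n$ admitting a PBD$(n,\{5,7,9\})$ is automatically covered. By Theorem~\ref{pbd-579}, such a PBD exists for all odd $v\ge 5$ except the listed definite and possible exceptions; comparing that exception list with the list in the lemma, the values $n\in\{11,13,15,17,19,23,27,29,31,33,39,43,51,59,71,75,95,99,113,115,119\}$ are precisely (a subset of) the ones \emph{not} reachable by PBD-closure from $\{5,7,9\}$, so those — together with the base cases $5,7,9$ — are exactly the ones needing a direct construction.

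Next I would supply the direct constructions. For each required $n$ I would exhibit a starter $S_1$ in $(\Z_{3n}\setminus H)\times\{0,1\}$ of the form prescribed in Section~2.2, where $H$ is the subgroup of order $3$ in $\Z_{3n}$ (so that $g=3n$, $h=3$, $g/h=n$); developing $S_1$ over $\Z_{3n}$ yields a Kirkman frame of type $6^n$ containing a $3$-GDD of type $3^n$, i.e.\ a frame of type $(3;6)^n$. Concretely this means specifying $(n-3)/2$ triples $\{(x_i,1),(s_i,0),(t_i,0)\}$ together with the $(n-3)/6$ base blocks $T_i$ for the $3$-GDD of type $3^n$, and checking conditions (1)--(4): that the $\{s_i,t_i\}$ form a frame starter in $(\Z_{3n}\setminus H)\times\{0\}$ (the differences $\pm(s_i-t_i)$ hit each nonzero non-$H$ element once), that the ``cross differences'' $s_i-x_i,\ t_i-x_i$ also hit each such element once, that the $T_i$ genuinely generate a $3$-GDD of type $3^n$, and that every point of $(\Z_{3n}\setminus H)\times\{0,1\}$ appears exactly once in $S_1$. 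Where convenient one can take the underlying frame starter to be strong, forcing $x_i=s_i+t_i$, which reduces the search. In practice these starters are found by a hill-climbing search (as noted in the remark after Example~\ref{ex:75}), and the explicit starters would be tabulated in the Appendix; the text of the proof would simply assert their existence and correctness, pointing to that table.

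The main obstacle is the direct-construction step, and within it two sub-issues deserve care. First, not every $n$ in the list is $\equiv 1\pmod 6$: the necessary condition for a Kirkman frame of type $(3;6)^n$ coming from $t(u-1)\equiv 0\pmod 3$ with $t=6$ is only $n$ odd, but the \emph{starter} construction over $\Z_{3n}$ needs $3\mid (g-h)=3n-3$ to form the $T_i$, which holds for all odd $n$, and needs $(g-h)/2=(3n-3)/2$ to be an integer, again automatic; so every odd $n\ge 5$ is in principle reachable, but for $n\not\equiv1\pmod 6$ (e.g.\ $n=11,15,17,\dots$) one may need a frame starter over a non-cyclic group or a slightly different ambient group, or an ad hoc construction (for instance filling in a larger frame, or using Lemma~\ref{lem:prime} and Propositions~\ref{Kfr-dp}--\ref{Kfr-addgroup} as an alternative route for composite $n$ such as $n=15=3\cdot5$ or $n=75$). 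Second, one must double-check the boundary cases $n=5,7,9$ very carefully, since the entire PBD-closure argument rests on them; I would give these explicitly (the $n=7$ case is essentially the Example in Section~2.2 reinterpreted, and $n=5,9$ are small enough to write out or cite). Once the finite list of starters is verified, the lemma follows by invoking Theorem~\ref{PBD-frames} for the remaining values with the PBD$(n,\{5,7,9\})$ guaranteed by Theorem~\ref{pbd-579}.
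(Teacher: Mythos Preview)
Your overall strategy---direct starter constructions for the base cases plus the PBD exceptions, with PBD-closure doing the rest---is sound, but the paper executes the lemma rather differently and more economically. Instead of building all twenty-four frames directly, the paper only gives explicit starters for twelve values ($n\in\{5,9,11,15,17,23,27,29,33,39,51,59\}$) and handles the remaining twelve by three recursive devices you mention only in passing: (i) for prime powers $n\equiv 1\pmod 6$ (namely $7,13,19,31,43$) it inflates the $(1;2)^n$ frame of Lemma~\ref{lem:prime} by a factor of $3$ via Proposition~\ref{Kfr-dp}; (ii) for $n=71$ and $n=113$ it applies the ``filling'' recursion of Proposition~\ref{Kfr-addgroup}; and (iii) for $n\in\{75,95,99,115,119\}$ it uses Theorem~\ref{PBD-frames} not with block sizes $\{5,7,9\}$ but with larger block sets $K$ for which a PBD$(n,K)$ comes from a transversal design (e.g.\ PBD$(99,\{9,11\})$ from a TD$(9,11)$), the needed $(3;6)^k$ ingredients for $k\in K$ having already been constructed. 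Your approach would work in principle---there is no obstruction to finding cyclic starters for all twenty-four $n$---but it pushes the hill-climbing out to $\Z_{357}$ (for $n=119$), whereas the paper's largest direct search is in $\Z_{177}$. Two small corrections to your write-up: the number of starter pairs is $(3n-3)/2$, not $(n-3)/2$, and the number of GDD base triples is $(3n-3)/6=(n-1)/2$, not $(n-3)/6$; also, your suggested product route for $n=15$ via $3\cdot 5$ does not actually work with Propositions~\ref{Kfr-dp}--\ref{Kfr-addgroup}, so that value genuinely requires a direct construction, as the paper does.
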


\begin{proof}
If $n\equiv 1 \pmod{6}$ is a prime power, then by Lemma~\ref{lem:prime} there exists a Kirkman frame of 
type $(1;2)^n$. Expand using a $3$-RGDD of type $3^3$ to get a Kirkman frame 
of type $(3;6)^n$(Proposition~\ref{Kfr-dp}). This construction takes care of $n\in \{7,13,19,31,43\}.$ 
For $n\in \{5,9,11,15,17, 23,27,29,33,39,51,59\}$, 
we use Kirkman frame starters  of type $(3;6)^n$. The starters are listed in the
Appendix.

Proposition~\ref{Kfr-addgroup} is applied to take care of  two  cases. 
For $n=71$, start with 
a Kirkman frame of type $(1;2)^7$ and expand using an RTD$(3,30)$ to get 
a Kirkman frame of type $(30;60)^7$. Next apply Proposition~\ref{Kfr-addgroup} with $g=3$, 
$m=10$ and a Kirkman frame of type $(3;6)^{11}$ to construct a Kirkman frame of 
type $(3;6)^{71}$. 
For $n=113$, start with a Kirkman frame of type $(3;6)^7$ and expand using 
an RTD$(3,16)$ to get a Kirkman frame of type $(48;96)^7$.  
Apply  Proposition ~\ref{Kfr-addgroup} with $m=16$ and using a Kirkman frame of type $(3;6)^{17}$.  

The remaining five cases are done using PBDs with larger block sizes. There 
exist PBD$(v,K)$   for the following $(v,K)$ parameter sets: 
$(75,\{5,15\})$,  $(95, \{5,19\})$, $(99, \{9,11\})$, $(115, \{5,23\})$, $(119, \{7,17\})$.  
(In each case, the PBD is constructed from a TD$(k,m)$ where $K=\{k,m\}$.) 
We apply Theorem~\ref{PBD-frames} with $g=3$ to construct Kirkman frames of 
type $(3;6)^n$ for $n\in \{75, 95,99,115,119\}$. 
\end{proof}

Combining Theorem~\ref{pbd-579} and Lemma~\ref{Kfr-small}
 results in the following existence result for Kirkman frames of type $(3;6)^n$.

\begin{thm}
\label{pbdclosure3,6}
There exists a Kirkman frame of type $6^n$ which contains as a subdesign a $3$-GDD 
of type $3^n$ for $n\equiv 1 \pmod{2}$ for $n\geq 5$ except possibly for 
$n \in \mathcal{E}_2 = \{ 83,87,107,111,139,179\}.$
\end{thm}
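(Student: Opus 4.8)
The plan is to obtain every Kirkman frame of type $6^n$ with a sub-$3$-GDD of type $3^n$ from a single recursive engine, namely the PBD-closure statement of Theorem~\ref{PBD-frames} applied with $g=3$. That result says that to build a Kirkman frame of type $(3;6)^n$ it suffices to exhibit a PBD$(n,K)$ such that a Kirkman frame of type $(3;6)^k$ exists for every block size $k\in K$. Since the only constraint coming from the underlying Kirkman frame of type $6^n$ is that $n$ be odd, and we are already assuming $n\ge 5$, the whole theorem reduces to covering the odd integers $n\ge 5$ by suitable PBDs whose block sizes appear among orders for which a $(3;6)$-frame is already known.

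For the bulk of the range I would invoke Theorem~\ref{pbd-579}: for every odd $v\ge 5$ outside its two listed exception sets there is a PBD$(v,\{5,7,9\})$. By Lemma~\ref{Kfr-small} Kirkman frames of type $(3;6)^5$, $(3;6)^7$ and $(3;6)^9$ all exist, so Theorem~\ref{PBD-frames} with $g=3$ immediately yields a Kirkman frame of type $(3;6)^n$ for every such $n$. This disposes of all odd $n\ge 5$ except the finitely many values lying in the exception sets of Theorem~\ref{pbd-579}.

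It then remains to chase down those exceptional orders individually. The definite exceptions $\{11,13,15,17,19,23,27,29,31,33,39\}$ are all listed in Lemma~\ref{Kfr-small}, so a $(3;6)$-frame exists directly for each. Among the possible exceptions $\{43,51,59,71,75,83,87,95,99,107,111,113,115,119,139,179\}$, Lemma~\ref{Kfr-small} already supplies $43,51,59,71,75,95,99,113,115,119$ (via prime-power frames inflated by RGDDs, frame starters, Proposition~\ref{Kfr-addgroup}, and PBDs with larger block sizes). Removing these from the list of possible exceptions leaves exactly $\{83,87,107,111,139,179\}=\mathcal{E}_2$, which is the set of orders for which the argument stalls.

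I do not expect a genuine mathematical obstacle: the proof is essentially an exercise in bookkeeping, and the one thing to check carefully is that the set difference between the exception list of Theorem~\ref{pbd-579} and the catalogue of Lemma~\ref{Kfr-small} is precisely $\mathcal{E}_2$ — in particular that no value in $\{43,\dots,179\}$ has been double-counted or overlooked, and that the definite exceptions are all genuinely covered by Lemma~\ref{Kfr-small}. The substantive work has been front-loaded into the direct constructions of Lemma~\ref{Kfr-small} (the frame starters, and the ad hoc recursive arguments for awkward orders such as $71$ and $113$); once those are in hand, the present theorem follows by combining them with the standard $\{5,7,9\}$-PBD-closure result.
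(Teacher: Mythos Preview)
Your proof is correct and follows exactly the approach the paper indicates: the paper simply states that the theorem results from combining Theorem~\ref{pbd-579} with Lemma~\ref{Kfr-small}, and you have spelled out that combination via Theorem~\ref{PBD-frames} with $g=3$ and verified the bookkeeping that the residual exception set is precisely $\mathcal{E}_2$.
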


It is also useful to have some small Kirkman frames of other types.

\begin{lemma}
\label{Kfr-6,12small}
There exist Kirkman frames for the following types: 
\begin{itemize}[topsep=-5pt]
\item $(2;4)^7$
\item  $(6;12)^n$  for $n=5,7,8,9$   
\item  $(12;24)^n$ for $n=4,5,7,8,9$. 
\end{itemize}
\end{lemma}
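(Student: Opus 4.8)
The goal is to produce a handful of small Kirkman frames by direct and short recursive arguments, so the plan is essentially a case analysis using the tools already assembled in the excerpt: frame starters (as listed in the Appendix), the direct product construction of Proposition~\ref{Kfr-dp}, the ``adding a group'' construction of Proposition~\ref{Kfr-addgroup}, and the prime-power base cases of Lemma~\ref{lem:prime}. For the type $(2;4)^7$ frame, I would give an explicit frame starter over $\Z_7$ (or a small direct construction over $\Z_7 \times \{0,1\}$ in the spirit of the Example in Section~2.2, now developing $3$-GDD base blocks of type $2^7$ instead of $1^7$); this is the kind of object a hill-climbing search produces quickly, and the data would be deposited in the Appendix alongside the other starters. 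The case $n=7$ is the only one needed here for the smallest groups, so one starter suffices.

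For the $(6;12)^n$ family with $n \in \{5,7,8,9\}$: when $n \in \{5,9\}$ I would take the Kirkman frame of type $(3;6)^n$ supplied by Lemma~\ref{Kfr-small} and inflate it by a factor of $2$ using an RTD$(3,2)$ (equivalently a TD$(4,2)$, which exists) via Proposition~\ref{Kfr-dp}, landing on $(6;12)^n$. For $n=7$ I would instead start from the Kirkman frame of type $(1;2)^7$ of Lemma~\ref{lem:prime} and inflate by a factor of $6$ using an RTD$(3,6)$ (which exists, being equivalent to a TD$(4,6)$, i.e.\ $2$ MOLS of order $6$ --- wait, that fails), so for $n=7$ one instead inflates the $(3;6)^7$ frame by a factor of $2$ with an RTD$(3,2)$. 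The value $n=8$ is the genuinely new case since $8$ is even and no $(3;6)^8$ frame is claimed; here I would supply a direct construction, most likely a frame starter over $\Z_7$ with one infinite or fixed point handled separately, or an explicit small construction recorded in the Appendix.

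For the $(12;24)^n$ family with $n \in \{4,5,7,8,9\}$: for $n \in \{5,7,9\}$ inflate the just-constructed $(6;12)^n$ frame by a factor of $2$ with an RTD$(3,2)$; for $n=8$ likewise inflate $(6;12)^8$; and for $n=4$, which is the remaining new value, give a direct construction --- a Kirkman frame of type $(12;24)^4$ on $24$ points per group. Since $n=4$ is small, this can be built by hand or by a short search and placed in the Appendix; alternatively one can obtain it by inflating a $(3;6)^4$ frame by a factor of $4$ using an RTD$(3,4)$ (which exists, a TD$(4,4)$ being $2$ MOLS of order $4$), provided a $(3;6)^4$ frame exists --- but a Kirkman frame of type $6^4$ requires $t(u-1) = 6 \cdot 3 \equiv 0 \pmod 3$, which holds, and $u = 4 \ge 4$, so such a frame exists and one need only verify that it can be chosen to contain a $3$-GDD of type $3^4$ subdesign; this is exactly the kind of small instance I would pin down with an explicit construction.

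The main obstacle is not the recursive inflation steps --- those are routine once the input frames exist --- but rather the three genuinely new small direct constructions, namely $(2;4)^7$, $(6;12)^8$, and $(12;24)^4$ (and possibly $(3;6)^4$ as an input for the last). For each of these the necessary conditions are easily checked (even group size, at least $4$ groups, divisibility of $t(u-1)$ by $3$), but producing an actual frame with the embedded maximum $3$-GDD requires either a clean algebraic starter or a computer search; I would rely on the hill-climbing methods cited in the Remark after Example~\ref{ex:75} and list the resulting starters/designs explicitly in the Appendix, then verify by inspection that the partial parallel classes and the $3$-GDD subdesign have the claimed structure.
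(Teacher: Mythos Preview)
Your plan has a genuine gap: several of your recursive steps rely on inflating by a factor of $2$ via an RTD$(3,2)$, but no such design exists. An RTD$(3,n)$ is equivalent to a TD$(4,n)$, i.e.\ two MOLS of order $n$, and there is no pair of MOLS of order $2$. This breaks your construction of $(6;12)^n$ for $n\in\{5,7,9\}$ and of $(12;24)^n$ for $n\in\{5,7,8,9\}$. (You correctly caught the analogous failure at $m=6$ for the $(1;2)^7 \to (6;12)^7$ route, but then fell back on the same nonexistent $m=2$ ingredient.)

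The paper works around this by doing more direct constructions than you anticipate and by choosing inflation factors that actually exist. Specifically: the types $(6;12)^5$, $(6;12)^8$, $(6;12)^9$, $(12;24)^4$, and $(12;24)^8$ are all built directly from frame starters listed in the Appendix (so five direct constructions, not three). The type $(6;12)^7$ is obtained from the directly-constructed $(2;4)^7$ by inflating with $m=3$ (an RTD$(3,3)$ does exist). The types $(12;24)^n$ for $n\in\{5,7,9\}$ are obtained from the $(3;6)^n$ frames of Lemma~\ref{Kfr-small} by inflating with $m=4$ (an RTD$(3,4)$, equivalently three MOLS of order $4$, exists). Your suggestion of reaching $(12;24)^4$ via a putative $(3;6)^4$ frame is not pursued; the paper simply builds $(12;24)^4$ directly.
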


\begin{proof} 
Kirkman frames of types $(2;4)^7$, $(6;12)^n$ for $n=5,8,9$, 
$(12;24)^4$, and $(12;24)^8$ are constructed 
directly using Kirkman frame starters; these starters are listed in the Appendix. 
A Kirkman frame of type $(6;12)^7$ is constructed using Proposition~\ref{Kfr-dp} 
and the Kirkman frame of type $(2;4)^7$ with  $m=3$.
Kirkman frames of types $(12;24)^n$ for $n=5,7,9$ 
are constructed using the Kirkman frame direct product, Proposition~\ref{Kfr-dp},  
using Kirkman frames of types $(3;6)^n$ with $m=4$.
\end{proof}

These small cases are enough to allow us to use PBD-closure to construct 
Kirkman frames of types $(6;12)^n$ and $(12;24)^n$ for all but a small number of 
exceptions.

\begin{thm}
\label{Kfr-12,24}
There exists a Kirkman frame of type $24^n$ which contains as a subdesign 
a $3$-GDD of type $12^n$ for $n$ a positive integer, $n\geq 4$ except possibly 
for $n \in \{6,10,12,14,18,26,30\}$. 
\end{thm}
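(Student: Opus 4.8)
The plan is to combine the PBD-closure result for Kirkman frames with maximum $3$-GDD subdesigns (Theorem~\ref{PBD-frames}) with the small ingredients from Lemma~\ref{Kfr-6,12small}. The necessary condition here is $n \geq 4$ (the group count of any Kirkman frame must be at least $4$, and $t(u-1)\equiv 0\pmod 3$ is automatic for $t=24$), so the goal is to realize all $n\geq 4$ outside the listed exception set. First I would record the ``seed'' set $S=\{4,5,7,8,9\}$ of orders for which a Kirkman frame of type $(12;24)^n$ exists directly, as furnished by Lemma~\ref{Kfr-6,12small}. Then, for any $n$ that is the order of a PBD with all block sizes in $S$, Theorem~\ref{PBD-frames} with $g=12$ immediately produces a Kirkman frame of type $(12;24)^n$. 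So the problem reduces to determining the PBD-closure of $\{4,5,7,8,9\}$ — i.e., for which $n$ does a PBD$(n,\{4,5,7,8,9\})$ exist.

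Next I would invoke the known spectrum for PBDs with block sizes in $\{4,5,7,8,9\}$ (or a convenient superset such as $\{4,5,\dots,9\}$ or subsets thereof) from the Handbook of Combinatorial Designs. Since there is no congruence restriction on $n$ once block sizes $4,5,7,8,9$ are all available (they cover every residue class mod small moduli), PBD$(n,\{4,\dots,9\})$ exists for all sufficiently large $n$, and the only work is to handle small $n$ by hand or by citing the tabulated list of genuine exceptions. I would cross-check each small $n$ with $4\le n\le$ (some modest bound, say $40$ or so): for each such $n$ either exhibit a suitable PBD (often a truncated TD or a direct small design), or note it lies in the claimed exception set $\{6,10,12,14,18,26,30\}$. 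In particular $n=6$ has no PBD$(6,K)$ with $K\subseteq\{4,5,7,8,9\}$ (too few points for a block of size $7$ or larger, and $4,5$ alone cannot balance $6$), and similarly for the other listed values one checks that no admissible PBD exists from this block-size set; these are exactly the orders the theorem flags as possibly missing.

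The main obstacle — as with all such PBD-closure arguments — is the bookkeeping at the boundary: confirming that the small exceptions listed, $\{6,10,12,14,18,26,30\}$, are precisely those $n\ge 4$ for which $\{4,5,7,8,9\}$-PBD-closure fails, i.e. that every other small $n$ genuinely admits a PBD on these block sizes. This is a finite check against the literature (Theorem~IV.3.23-type tables in \cite{Handbook} and the standard PBD-closure tables for block sizes $4,5$ and for $\{4,5,\dots,9\}$), together with a handful of explicit constructions (deleting points from a TD$(k,m)$, adjoining points to resolvable designs, etc.). Once that list is verified, the closure theorem does the rest, and no congruence obstruction remains, so the stated result follows.
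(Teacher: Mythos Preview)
Your approach via PBD-closure of $\{4,5,7,8,9\}$ is the right starting point and matches the paper's first step, but there is a concrete gap: the exception set for PBD$(n,\{4,5,7,8,9\})$ is \emph{not} $\{6,10,12,14,18,26,30\}$. According to \cite[IV.3.23]{Handbook}, the genuine and possible exceptions for this closure form the larger set
\[
\mathcal{E}_3=\{6,10,11,12,14,15,18,19,23,26,27,30,51\}.
\]
Your proof, as written, asserts that the bookkeeping will show the seven even values are the only ones for which closure fails; that is false. For instance, there is no PBD$(11,\{4,5,7,8,9\})$, and likewise $n=15,19,23,27,51$ are all missing from the closure.

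What you are missing is a second ingredient to dispose of these six odd values. The paper does this with the direct product (Proposition~\ref{Kfr-dp}): for each odd $n\in\mathcal{E}_3$, a Kirkman frame of type $(3;6)^n$ exists by Lemma~\ref{Kfr-small}, and inflating it by an RTD$(3,4)$ yields the desired $(12;24)^n$ frame. Only after this step do exactly the even values $\{6,10,12,14,18,26,30\}$ remain. So your outline needs an additional paragraph invoking Proposition~\ref{Kfr-dp} with $m=4$ on the $(3;6)$-frames for $n\in\{11,15,19,23,27,51\}$; without it the argument is incomplete.
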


\begin{proof}
Using the PBD-closure for the set $\{4,5,7,8,9\}$, \cite[IV.3.23]{Handbook}, 
there exist Kirkman frames of type $(12;24)^n$ for $n\geq 4$ and 
$n\notin \mathcal{E}_3 =\{6,10,11,12,14, 15,18,19,23,26,27,30,51\}$. Applying the frame 
direct product, Proposition~\ref{Kfr-dp}, with Kirkman frames of types $(3;6)^n$ 
with $n=11,15,19,23,27,51\}$ and $m=4$ takes care of the remaining odd values 
of $n \in \mathcal{E}_3$.
\end{proof}

PBD-closure for the set $\{5,7,8,9\}$, \cite[IV.3.23]{Handbook}  also gives us the 
following result.  We can use Theorem~\ref{PBD-frames} with $g=2$ and $k=7$ to 
take care of one of the exceptions, namely $n=43$. 

\begin{thm}
\label{Kfr-6,12}
There exists a Kirkman frame of type $12^n$ which contins as a subdesign a $3$-GDD of 
type $6^n$ for all integers $n \geq 4$, except possibly for 
$$n\in \{ 4,6, 10,\dots,20, 22,23,24, 26,\dots,34, 38,39, 42,44, 46,51, 52, 60, 
94,95,96, 98,99,100,102,104,$$  $$106, 107,108,110,111,116,138,140,142,146,150,154,
156,158,162,166,170,172,174,206\}.$$
\end{thm}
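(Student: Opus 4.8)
The plan is to derive Theorem~\ref{Kfr-6,12} as a PBD-closure result, in the same spirit as the proofs of Theorems~\ref{pbdclosure3,6} and \ref{Kfr-12,24}. First I would record that the divisibility conditions for a $3$-GDD of type $6^n$ (Theorem~\ref{3GDD}) and for a Kirkman frame of type $12^n$ are automatically satisfied for every $n\ge 4$, so there is no congruence restriction and the target spectrum is all integers $n\ge 4$. The main step is then to apply Theorem~\ref{PBD-frames} with $g=6$: Lemma~\ref{Kfr-6,12small} already provides Kirkman frames of type $(6;12)^k$ for each $k\in\{5,7,8,9\}$, so a Kirkman frame of type $(6;12)^v$ exists whenever a PBD$(v,\{5,7,8,9\})$ does. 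I would then quote the spectrum of $\{5,7,8,9\}$-PBDs from \cite[IV.3.23]{Handbook}; within the range $n\ge 4$, its complement coincides with the list of possible exceptions in the statement, except for the single value $n=43$.

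To remove $n=43$, which appears among the $\{5,7,8,9\}$-PBD exceptions, I would descend to $g=2$ and use the frame of type $(2;4)^7$ from Lemma~\ref{Kfr-6,12small} as the ingredient in Theorem~\ref{PBD-frames} (the case $g=2$, $k=7$), producing a Kirkman frame of type $(2;4)^{43}$; inflating this by a factor of $3$ via the frame direct product (Proposition~\ref{Kfr-dp}) with a resolvable TD$(3,3)$ then gives the required Kirkman frame of type $(6;12)^{43}$. This is the only place where anything beyond the plain $g=6$ PBD-closure is needed.

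The difficulty here is not a deep obstacle but the limited ingredient list: the only small orders for which we have direct (starter or hill-climbing) constructions of frames of type $(6;12)^k$ are $k\in\{5,7,8,9\}$, so the set of possible exceptions is essentially forced to be the set of $\{5,7,8,9\}$-PBD exceptions. I would note that shrinking it further requires either new direct constructions of type $(6;12)^n$ for additional small $n$, or a recursive construction that changes the number of groups --- unlike Proposition~\ref{Kfr-dp}, which only rescales the group size --- and I would leave such improvements, together with the attendant computation, for future work.
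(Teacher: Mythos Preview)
Your overall strategy matches the paper's: apply Theorem~\ref{PBD-frames} with $g=6$ using the ingredients $(6;12)^k$, $k\in\{5,7,8,9\}$, from Lemma~\ref{Kfr-6,12small}, and then quote the $\{5,7,8,9\}$-PBD spectrum from \cite[IV.3.23]{Handbook}. That part is fine and is exactly what the paper does.

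The gap is in your treatment of $n=43$. Invoking Theorem~\ref{PBD-frames} with $g=2$ and sole block size $k=7$ to manufacture a $(2;4)^{43}$ frame requires a PBD$(43,\{7\})$, i.e.\ a $2$-$(43,7,1)$ design. But such a design is precisely a projective plane of order $6$, which does not exist by the Bruck--Ryser theorem. So this step fails as written, and the subsequent inflation by $3$ never gets off the ground. (The paper's own one-line hint uses the same wording and is, to be fair, at best cryptic; your elaboration simply makes the difficulty explicit.)

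A repair that stays entirely within the paper's toolkit is to use Proposition~\ref{Kfr-addgroup} rather than Theorem~\ref{PBD-frames} for this value. Taking $g=2$, $m=6$, $n=7$, the required inputs are a Kirkman frame of type $(12;24)^7$ and one of type $(2;4)^{7}$, both supplied by Lemma~\ref{Kfr-6,12small}; the output is a Kirkman frame of type $(2;4)^{43}$, and now your inflation by a resolvable TD$(3,3)$ legitimately yields $(6;12)^{43}$. Equivalently, one may inflate $(2;4)^7$ by an RTD$(3,18)$ to obtain $(36;72)^7$ and apply Proposition~\ref{Kfr-addgroup} with $g=6$, $m=6$, $n=7$ to reach $(6;12)^{43}$ directly.
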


\rk
Theorems~\ref{Kfr-12,24} and ~\ref{Kfr-6,12} are sufficient for our results to follow.
It is likely that direct constructions for some of the smaller designs 
could be used to reduce the number of possible exceptions for these results. 
For example, if a Kirkman frame with a sub-GDD of type $(6,12)^4$ 
were available, this would reduce the list of possible exceptions  for 
Theorem~\ref{Kfr-6,12} to just 13 cases, 
$\mathcal{E}_3$.

\section{Maximum subdesigns}
\label{sec:maximal}

In this section, we treat the case of maximal subdesigns in Kirkman triple systems, proving Theorems~\ref{thm:mkts} and \ref{thm:2u+3}.  We consider each case in turn.

\subsection{Completing the spectrum for MK$(v)$}

Here, we let $u\equiv 1\pmod{6}$ and $v=2u+1\equiv 3\pmod{12}$. 
Let RMK = $\{u : \text{there exists an MK}(2u+1) \}$.  It is known \cite{MSV, MV} that RMK is PBD-closed; that is, if there exists a PBD$(v,K)$ such that for every $k\in K$ there exists an MK$(2k+1)$, 
then there exists an MK$(2v+1)$.  The main construction in \cite{MSV} uses Lemma~\ref{lem:prime}, the 
existence of MK$(2q+1)$ for $q$ a prime power congruent to $1 \pmod{6}$, together 
with PBD-closure of the set of such integers.

We begin with an updated PBD-closure result for prime powers congruent to $1\pmod{6}$. Note that this 
immediately settles two values listed as exceptions in \cite{Stinson-KTS}, namely $655,1243 \in \mathcal{E}$.

\begin{thm}[\mbox{\cite[IV.3.23]{Handbook}}]
\label{pbd1mod6}
Let $Q_{1(6)} := \{q: q \text{ is a prime power}, q \equiv 1 \pmod{6}\}$.
There exists a PBD$(v,Q_{1(6)})$ for all $v\equiv 1\pmod{6}$, except for $v=55$ and possibly $v\in \mathcal{E}_1$ 
where $$\mathcal{E}_1 =\{ 115, 145, 205,235,265,319,355,391,415,445,451,493,649,667,
685,697,745,781,799,805,1315\}.$$
\end{thm}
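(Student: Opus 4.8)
Since this theorem is quoted verbatim from \cite[IV.3.23]{Handbook}, strictly no proof is needed in the present paper; but the standard route to a PBD-closure statement of this shape is as follows. First note that every member of $Q_{1(6)}$ is itself $\equiv 1\pmod 6$, so any pairwise balanced design all of whose block sizes lie in $Q_{1(6)}$ has order $v\equiv 1\pmod 6$: this is the necessary condition, and the assertion is that it is also sufficient apart from $v=55$ and the listed set $\mathcal{E}_1$. The plan is the usual two-stage attack: dispose of all large admissible orders asymptotically via Wilson's existence theorem for PBD-closed families, then settle the finitely many small orders below the Wilson bound by direct and recursive constructions, the orders that resist every available construction being exactly $\{55\}\cup\mathcal{E}_1$.

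For the asymptotic stage I would compute the two PBD-closure invariants of $K=Q_{1(6)}$, namely $\alpha(K)=\gcd\{k-1:k\in K\}=\gcd\{6,12,18,24,\dots\}=6$ and $\beta(K)=\gcd\{k(k-1):k\in K\}=6$ (already $\gcd(7\cdot 6,\,13\cdot 12)=6$, and $6\mid k(k-1)$ for every $k\in K$). Wilson's fundamental theorem then produces a constant $N$ such that a PBD$(v,Q_{1(6)})$ exists for all $v\ge N$ with $v\equiv 1\pmod 6$, which reduces the problem to the finitely many orders $v<N$, $v\equiv 1\pmod 6$.

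The small-order stage carries the actual work, and it is where the exceptions are forced. The ingredients I would marshal are: prime power orders $v=q\in Q_{1(6)}$, which are trivially available and, more importantly, serve as admissible block sizes; transversal designs, since a TD$(k,m)$ with its groups adjoined as blocks is a PBD$(km,\{k,m\})$, and truncating groups before adjoining yields PBDs of many further orders — all admissible over $Q_{1(6)}$ when the sizes involved are; Wilson's Fundamental Construction, inflating a master GDD by weights realised through TDs so as to reach orders not of simple product form; and the elementary recursive constructions that build a PBD$(v,Q_{1(6)})$ from a smaller PBD together with appropriate GDDs. One then checks each residual order against this toolkit. Among the leftovers, $v=55$ is a genuine non-existence requiring a separate finite argument, while the remaining orders in $\mathcal{E}_1$ are simply beyond the reach of currently known small ingredients. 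The hard part is thus not any single construction but the finite bookkeeping — guaranteeing a sufficiently rich supply of TDs and small PBDs, and certifying order-by-order that some recursion closes — which is exactly what the tabulation \cite[IV.3.23]{Handbook} records and which we invoke here.
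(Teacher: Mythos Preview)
Your proposal is correct, and indeed the paper itself gives no proof of this statement: it is simply quoted as \cite[IV.3.23]{Handbook} and used as input to the subsequent constructions. Your outline of the standard two-stage Wilson argument (asymptotic existence via the invariants $\alpha(K)=\beta(K)=6$, followed by finite bookkeeping with TDs, truncation, and WFC to handle small orders) is an accurate description of how such PBD-closure tables are produced, and your identification of $v=55$ as a genuine non-existence versus $\mathcal{E}_1$ as merely unresolved matches the wording of the theorem.
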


We  use Kirkman frames with maximum $3$-GDD subdesigns to construct most of 
the remaining cases.  The first construction is a direct product construction combined with 
`filling in the holes'.  For completeness, a brief proof is included to indicate how the
$3$-GDD subdesign is constructed.

\begin{thm}
\label{fr-dirprod}
If there exists a Kirkman frame of type $(2g)^n$ which contains as a subdesign a $3$-GDD 
of type $g^n$,  a resolvable TD($3,m)$, and an MK$(2gm+3)$, 
then there exists an MK$(2gmn+3)$.
\end{thm}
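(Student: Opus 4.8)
The plan is to combine the frame direct product (Proposition~\ref{Kfr-dp}) with the "filling in the holes" construction, keeping careful track of the $3$-GDD subdesign throughout. First I would apply Proposition~\ref{Kfr-dp} to the given Kirkman frame of type $(g;2g)^n$ and the resolvable TD$(3,m)$ to obtain a Kirkman frame $\cF$ of type $(gm;2gm)^n$; by that proposition this frame carries as a subdesign a $3$-GDD $\cD$ of type $(gm)^n$. Concretely, $\cF$ is defined on a point set of the form $V\times\{1,\dots,2gm\}$ with $n$ groups of size $2gm$, and $\cD$ lives on a subset with $n$ groups of size $gm$, one inside each group of $\cF$.

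Next I would fill in the holes. Adjoin three new points to the whole construction, and on each group of $\cF$ (together with the three new points) place a copy of an MK$(2gm+3)$ — these exist by hypothesis, since a group of $\cF$ has size $2gm$ and the MK has order $2gm+3$. The key structural point is that an MK$(2gm+3)$ contains a maximum sub-STS of order $gm$; I would insist that this sub-STS be placed on the distinguished $gm$-subset of the corresponding group, i.e.\ on the group of $\cD$, plus possibly adjusting so the three adjoined points lie outside all the sub-STS's. Since the frame $\cF$ has partial parallel classes each missing exactly one group, and each MK$(2gm+3)$ is itself resolvable, one can interleave the partial resolution classes of $\cF$ with the parallel classes of the MK$(2gm+3)$ on the missing group to obtain full parallel classes on the $2gmn+3$ points; a count of $(2gmn+3-1)/2 = gmn+1$ classes confirms everything lines up. The resulting design is a KTS$(2gmn+3)$.

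It remains to identify the sub-STS of order $gmn$. Here the blocks of $\cD$ (a $3$-GDD of type $(gm)^n$ on $gmn$ points) together with the sub-STS's of order $gm$ coming from the $n$ copies of the MK$(2gm+3)$ — one per group of $\cD$ — form exactly the blocks of an STS$(gmn)$: the cross-group pairs are covered once by $\cD$, and the within-group pairs are covered once by the respective sub-STS. So the KTS$(2gmn+3)$ contains a maximum STS$(gmn)$, which is precisely an MK$(2gmn+3)$.

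The main obstacle, and the only place requiring care, is the resolvability bookkeeping: one must check that the partial parallel classes of $\cF$ can be completed using parallel classes of the filled-in MK$(2gm+3)$'s in a globally consistent way (each MK on a group contributes to the classes that "miss" that group in the frame). This is the standard "filling in the holes" argument for Kirkman frames, and since the MK$(2gm+3)$'s are resolvable with the right number of classes it goes through without incident; I would only sketch it, as in the statement the construction is asserted for "completeness."
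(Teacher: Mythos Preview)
Your overall strategy matches the paper's: inflate via the RTD, then fill holes of the resulting frame with copies of the MK$(2gm+3)$ on each group together with three adjoined points. However, there is a genuine error in your subdesign bookkeeping that propagates through the rest of the argument.

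The maximum sub-STS in an MK$(2gm+3)$ has order $gm+1$, not $gm$ (recall that MK$(v)$ denotes a KTS$(v)$ with sub-STS of order $(v-1)/2$). Consequently you cannot place that sub-STS entirely on the $gm$-element group of $\cD$ as you propose; one of its points must come from the three adjoined points, contrary to your ``adjusting so the three adjoined points lie outside all the sub-STS's.'' In the paper's proof the three new points are $Z=\{\alpha,\beta,\infty\}$, the sub-STS$(gm+1)$ of each $M_i$ is placed on the $gm$ points of the corresponding group of $\cD$ \emph{together with} the common point $\infty$, and $Z$ is arranged to be a block of every $M_i$. The resulting subdesign is then an STS$(gmn+1)$ (the $3$-GDD $\cD$ of type $(gm)^n$ with each group filled by an STS$(gm+1)$ through the shared point $\infty$), which is exactly the order needed for an MK$(2gmn+3)$. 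Your claimed ``STS$(gmn)$'' is the wrong order and indeed need not even be an admissible STS order.

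This off-by-one also clarifies the resolution step you waved through: each MK$(2gm+3)$ has $gm+1$ parallel classes, of which only $gm$ can pair with the frame's partial classes at that hole. The leftover class from each $M_i$ is the one containing the common block $Z$; these leftovers are combined across all $i$ (taking $Z$ just once) to form the single remaining parallel class, giving $gmn+1$ classes in total.
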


\begin{proof}
Let $\cF$ be a Kirkman frame of type $2g^n$ defined on $V=\cup_{i=1}^{n} V_i$ where 
$V_i = X_i \cup Y_i$, $|X_i|=|Y_i|=g$ and such that the $3$-GDD of type $g^n$ has 
groups $Y_i$, $i=1,2,\dots,n$. Expand using an RTD$(3,m)$ to get a Kirkman 
frame of type $(2gm)^n$ which contains as a subdesign a $3$-GDD of type $(gm)^n$. 
Note that the groups of this frame are $V_i \times \{1,2,\dots,m\}$ for $i=1,2,\dots,n$ 
and the groups of the GDD are $Y_i \times \{1,2,\dots,m\}$ for $i=1,2,\dots,n$. 
Let $M_i$ denote an MK$(2gm+3)$ defined on 
$V_i \times \{1,2,\dots,m\} \cup Z$, where $Z=\{\alpha,\beta,\infty\}$ and where an STS$(gm+1)$ subdesign exists on $Y_i \times \{1,2,\dots,m\} \cup \{\infty\}$. The design $M_i$ has 
$gm +1$ parallel classes; let them be denoted by $R_i^{j}$ for $j=1,2,
\dots, gm+1$. Without loss of generality, suppose $R_i^{gm+1}$ contains the block $Z$ for each $i$. 
Fill in each frame hole with the resolution classes $R_i^{j}$ for $j=1,2,\dots,gm$.  This gives 
us $gmn$ resolution classes. Another class is given by $\cup_{i=1}^{n} (R_i^{gm+1} \setminus 
\{Z\})  \cup \{Z\}$.
The subdesigns STS$(gm+1)$ fit in the groups of the $3$-GDD of type 
$(gm)^n$ to construct an STS$(gmn+1)$ subdesign. 
\end{proof}

\rk
Note that this  KTS$(2gmn+3)$ also contains as subdesigns KTS$(2gm+3)$ and STS$(gm+1)$.

Kirkman frames of types $(6;12)^n$ and $(12;24)^n$ can be used directly to construct Kirkman 
triple systems with maximum subdesigns.

\begin{lemma} 
\label{6,12-const}
 If there exists a Kirkman frame of type $(6;12)^n$, then there is a KTS$(12n+3)$ which 
contains as a subdesign an STS$(6n+1)$.  
\end{lemma}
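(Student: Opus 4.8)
The plan is to use the standard ``filling in the holes'' construction for Kirkman frames, here adjoining three new points. Let $\cF$ be the given Kirkman frame of type $(6;12)^n$, on a point set $V$ with groups $V_1,\dots,V_n$ (each of size $12$), and let $W_i\subseteq V_i$ with $|W_i|=6$ be the groups of the $3$-GDD of type $6^n$ sitting inside $\cF$. Adjoin three new points $\alpha,\beta,\infty$ and set $Z=\{\alpha,\beta,\infty\}$. For each $i$ I would place on $V_i\cup Z$ a copy of a KTS$(15)$ chosen so that (i) $Z$ is itself a block, and (ii) it contains a sub-STS$(7)$ on $W_i\cup\{\infty\}$ (so in particular this STS$(7)$ meets $Z$ only in the point $\infty$). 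Such a copy exists: in Kirkman's schoolgirl solution of Figure~\ref{KS15} the triple $\{o,p,7\}$ occurs in the first parallel class and $\{1,\dots,7\}$ carries a sub-STS$(7)$; relabel so that $7\mapsto\infty$, $\{o,p\}\mapsto\{\alpha,\beta\}$, $\{1,\dots,6\}\mapsto W_i$, and $\{h,i,k,l,m,n\}\mapsto V_i\setminus W_i$ (arbitrarily).

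Next I would assemble the resolution. Recall that in a Kirkman frame of type $12^n$ every group is missed by exactly $12/2=6$ partial parallel classes; write $F_i^1,\dots,F_i^6$ for those missing $V_i$, each a partition of $V\setminus V_i$. The copy of KTS$(15)$ on $V_i\cup Z$ has $7$ parallel classes: one, $P_i^0$, containing the block $Z$, and six others $P_i^1,\dots,P_i^6$. Since $Z$ exhausts the three points of $Z$ within $P_i^0$, the remaining blocks of $P_i^0$ lie inside $V_i$ and partition it. Take the blocks of the new design to be the blocks of $\cF$, a single copy of $Z$, and all blocks $\ne Z$ of the $n$ copies of KTS$(15)$; and take its parallel classes to be $F_i^j\cup P_i^j$ for $1\le i\le n$, $1\le j\le 6$, together with the one further class $\{Z\}\cup\bigcup_{i=1}^n\big(P_i^0\setminus\{Z\}\big)$. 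This gives $6n+1=(12n+2)/2$ classes, the correct number for a KTS$(12n+3)$ on $V\cup Z$, and one checks routinely that each pair is covered exactly once: pairs inside $Z$ by the single copy of $Z$; pairs inside one $V_i$, and pairs with exactly one point in $Z$, by the copy of KTS$(15)$ on $V_i\cup Z$ (using that its blocks $\ne Z$ meet $Z$ in at most one point); and cross-group pairs by $\cF$ (no frame block has two points in one group).

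For the subdesign, let $U=\{\infty\}\cup\bigcup_{i=1}^n W_i$, so $|U|=6n+1$, and let its blocks be the blocks of the $3$-GDD of type $6^n$ together with the blocks of the $n$ copies of STS$(7)$ on the sets $W_i\cup\{\infty\}$. Every pair inside a single $W_i$ and every pair $\{\infty,w\}$ is covered by the relevant STS$(7)$, and every pair meeting two distinct $W_i$'s by the $3$-GDD, so $(U,\cdot)$ is an STS$(6n+1)$. Each of its blocks is a block of the constructed KTS$(12n+3)$: the $3$-GDD blocks are frame blocks, and each STS$(7)$ block is a block of a copy of KTS$(15)$ different from $Z$ (it lies in $W_i\cup\{\infty\}$, which does not contain $\alpha$). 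Hence the STS$(6n+1)$ is a subdesign.

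The only step that is not pure bookkeeping is producing a KTS$(15)$ satisfying (i) and (ii) simultaneously, and aligning its class count ($7=6+1$) with the slack in the frame; this is where I expect the genuine content to lie, and it is settled at once by Figure~\ref{KS15} as described above. Everything else is the familiar hole-filling argument for Kirkman frames, with care only to include the block $Z$ just once and to keep track of where the copies of STS$(7)$ combine with the $3$-GDD.
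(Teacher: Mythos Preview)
Your proof is correct and follows essentially the same approach as the paper: adjoin three points $Z=\{\alpha,\beta,\infty\}$, fill each frame hole with a KTS$(15)$ having $Z$ as a block and a sub-STS$(7)$ on the sub-GDD half together with $\infty$, resolve by pairing the six non-$Z$ classes with the six frame classes at each hole and forming one extra class from the $Z$-class remnants, and obtain the STS$(6n+1)$ by combining the $3$-GDD with the $n$ copies of STS$(7)$. Your explicit appeal to Figure~\ref{KS15} to justify the existence of a KTS$(15)$ satisfying (i) and (ii) simultaneously is a nice addition that the paper leaves implicit.
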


\begin{proof}
Suppose there exists a Kirkman frame of type $(6;12)^n$  
defined on $\cup_{i=1}^{n} W_i$ where 
$W_i = X_i \cup Y_i$ and $|X_i| =|Y_i| = 6$. The groups of the frame are $W_i$ for 
$i=1,2,\dots,n$. The $3$-GDD is defined on $\cup_{i=1}^{n} Y_i$, with groups $Y_i$ for $i=1,2,\dots,n$.  Put $Z=\{\alpha,\beta,\infty\}$ as a set of new points.

We fill in the holes  of the frame with copies of a KTS$(15)$ defined on each $W_i \cup Z$, where $Z$ is a block of each.  Let $R'_i$ 
denote the class of the corresponding KTS$(15)$ which contains the block $Z$.
Put $R^{\prime} = \cup_{i=1}^{n} (R_i^{\prime} \setminus \{Z\}) 
\cup \{Z\}$. The remaining six parallel classes of the KTS are used 
to complete the $6n$ frame classes. This gives us $6n+1$ classes altogether, and thus a KTS$(12n+3)$.

The $3$-GDD of type $6^n$ has groups defined on $Y_i$ for $i=1,2,\dots,n$. Each of 
these groups gets filled in with an STS$(7)$ defined on $Y_i \cup \{\infty\}$ since 
each KTS$(15)$ contains such a subdesign. This results in an STS$(6n+1)$ which 
intersects each KTS$(15)$ in an STS$(7)$. 
\end{proof}

The construction is similar for Kirkman frames of type $(12;24)^n$. In this case, we 
fill in the holes of the frames using a KTS$(27)$ which contains as a subdesign an STS$(13)$.

\begin{lemma} 
\label{12,24-const}
  If there exists a Kirkman frame of type $(12;24)^n$, then 
there exists a KTS$(24n+3)$ which contains as a subdesign an STS$(12n+1)$. 
\end{lemma}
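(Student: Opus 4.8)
The plan is to imitate the proof of Lemma~\ref{6,12-const} almost verbatim, replacing the KTS$(15)$ used to fill holes with a KTS$(27)$ containing a sub-STS$(13)$, and replacing the three-point ``extra'' set $Z$ by the point set of a KTS$(3)$, i.e.\ a single block. First I would start with a Kirkman frame of type $(12;24)^n$ defined on $\cup_{i=1}^n W_i$, where each group $W_i = X_i \cup Y_i$ with $|X_i|=|Y_i|=12$, the underlying $3$-GDD of type $12^n$ being defined on $\cup_{i=1}^n Y_i$ with groups $Y_i$. Adjoin a set $Z=\{\alpha,\beta,\infty\}$ of three new points. Since a KTS$(24)$ is the hole size plus $3$, I would fill in each hole $W_i$ with a copy of a KTS$(27)$ defined on $W_i \cup Z$ in which $Z$ is one of the blocks and which contains a sub-STS$(13)$ on $Y_i \cup \{\infty\}$ (such a KTS$(27)$ with a maximal sub-STS$(13)$ is an MK$(27)$, which exists by Lemma~\ref{lem:prime} with $q=13$, or directly).

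Next I would handle the resolution. Each filled-in KTS$(27)$ has $13$ parallel classes; let $R_i'$ be the one containing the block $Z$. Exactly as in Lemma~\ref{6,12-const}, set $R' = \cup_{i=1}^n (R_i' \setminus \{Z\}) \cup \{Z\}$, a single parallel class of the whole point set $\cup_i W_i \cup Z$. The other $12$ parallel classes of each KTS$(27)$ (those not containing $Z$) restrict to partial parallel classes on $W_i$, and these are used to complete the $12n$ partial resolution classes of the Kirkman frame, one hole's worth per frame class. This yields $12n$ full classes from the frame classes plus the one class $R'$, for a total of $12n+1$ parallel classes on $24n+3$ points, hence a KTS$(24n+3)$.

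Finally, for the subdesign: the $3$-GDD of type $12^n$ sitting inside the frame has groups $Y_i$; each $Y_i$ gets filled, within the corresponding KTS$(27)$, by the sub-STS$(13)$ on $Y_i \cup \{\infty\}$. The blocks of the $3$-GDD together with these $n$ copies of STS$(13)$ (which pairwise share only the point $\infty$) form an STS on $\cup_i Y_i \cup \{\infty\}$, a set of size $12n+1$; that this is indeed a Steiner triple system follows because every pair within a group $Y_i \cup \{\infty\}$ is covered by its STS$(13)$ and every pair across two groups is covered exactly once by the $3$-GDD. So the resulting KTS$(24n+3)$ contains a sub-STS$(12n+1)$, as required. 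There is essentially no obstacle here: the only thing to verify carefully is the bookkeeping that the number of partial classes in the frame ($12n$ of them, coming in the right multiplicities) matches the $12$ non-$Z$ classes available from each of the $n$ filled holes, and that the block $Z$ appears in exactly one class of each filled KTS$(27)$ so that $R'$ is well defined; both are immediate from the structure of a Kirkman frame and of a KTS containing a fixed block as a parallel class member. As in the remark after Lemma~\ref{6,12-const}'s analogue, one notes in passing that this KTS$(24n+3)$ also contains sub-KTS$(27)$'s and a sub-STS$(13)$.
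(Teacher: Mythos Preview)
Your proposal is correct and follows exactly the approach the paper indicates: the paper simply remarks before the lemma that ``the construction is similar for Kirkman frames of type $(12;24)^n$'' and that one fills the holes with a KTS$(27)$ containing a sub-STS$(13)$, which is precisely what you do. The one small point you might state explicitly is why an MK$(27)$ can be arranged so that the fixed block $Z$ meets the sub-STS$(13)$ in exactly the point $\infty$; this follows by counting (any point of the sub-STS$(13)$ lies in $13$ blocks of the KTS$(27)$, only $6$ of which are inside the subdesign), and the paper glosses over the analogous point for KTS$(15)$ as well.
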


We are now in a position to prove Theorem~\ref{thm:mkts} and settle the 
remaining cases.   As in \cite{MSV}, we use PBD-closure for our main construction.  
This leaves us with the values in $\mathcal{E}_1$ from the  updated PBD-closure result, Theorem~\ref{pbd1mod6}. The case $u=55$ was added without proof as an 
addendum in \cite{MSV}.  We include  a recursive construction for this case. 

\begin{lemma}
There exists an MK$(2u+1)$ for $u \in \mathcal{E}_1 \cup \{55\}$.
\end{lemma}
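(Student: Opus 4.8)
The plan is to handle the finitely many orders $u \in \mathcal{E}_1 \cup \{55\}$ by whichever recursive construction is most convenient for each value, drawing on the Kirkman frame results of Section~\ref{sec:frames}. The two workhorses will be Theorem~\ref{fr-dirprod} (the direct-product-plus-fill construction, which turns a Kirkman frame of type $(g;2g)^n$, an RTD$(3,m)$, and an MK$(2gm+3)$ into an MK$(2gmn+3)$) and Lemmas~\ref{6,12-const} and \ref{12,24-const} (which convert a Kirkman frame of type $(6;12)^n$ or $(12;24)^n$ directly into a KTS$(12n+3)$ with a sub-STS$(6n+1)$, respectively a KTS$(24n+3)$ with a sub-STS$(12n+1)$). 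Since each target value $u$ determines $v = 2u+1$, for each $u$ in the list I would first write $v-3 = 2u-2$ in a factored form $2gmn$ or $v = 12n+3$ or $v = 24n+3$ that matches the hypotheses of one of these results, check that the required frame exists by Theorem~\ref{pbdclosure3,6}, \ref{Kfr-12,24}, or \ref{Kfr-6,12} (avoiding the small exception sets $\mathcal{E}_2,\mathcal{E}_3$), and that the required small MK ingredient (e.g.\ MK$(15)$, MK$(27)$, or a prime-power case from Lemma~\ref{lem:prime}) is in hand.

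Concretely, I expect most values to fall to Lemma~\ref{6,12-const}: for $u \equiv 1 \pmod 6$, $v = 2u+1 \equiv 3 \pmod{12}$, so $v = 12n+3$ with $n = (v-3)/12 = (u-1)/6$, and then an MK$(2u+1)$ follows from a Kirkman frame of type $(6;12)^{(u-1)/6}$ whenever that $n$ avoids the exceptions of Theorem~\ref{Kfr-6,12}. For the remaining values — those where $(u-1)/6$ is an exceptional frame order — I would switch to Lemma~\ref{12,24-const} (writing $v = 24n+3$ when $u \equiv 1 \pmod{12}$, i.e.\ $n = (u-1)/12$, using Theorem~\ref{Kfr-12,24}), or to Theorem~\ref{fr-dirprod} with a suitable factorization $v-3 = 2gmn$, a frame of type $(g;2g)^n$ from Theorem~\ref{pbdclosure3,6} (so $g=3$), an RTD$(3,m)$ from the tables in \cite{Handbook}, and a previously constructed MK$(2gm+3) = $ MK$(6m+3)$. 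For the addendum case $u=55$ I would give an explicit recursion: here $v = 111$, and $n = 110/12$ is not an integer for the $(6;12)$ route directly, so one instead uses Theorem~\ref{fr-dirprod} with $g=3$, $n=5$, $m=11$ (or a symmetric choice), needing a Kirkman frame of type $(3;6)^5$ (available by Lemma~\ref{Kfr-small}), an RTD$(3,11)$, and an MK$(2\cdot 3\cdot 11+3)=$ MK$(69)$, with $u=34 \equiv 4$ — so I would instead pick the factorization $110 = 2\cdot 5\cdot 11$ so that the small MK ingredient has an admissible order, or fall back on a direct/PBD argument for MK$(69)$ if needed.

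The main obstacle is purely bookkeeping rather than conceptual: for each of the roughly twenty-two exceptional orders one must find at least one factorization or PBD decomposition all of whose ingredients (the frame, the transversal design, and the smaller MK) actually exist, and in particular the recursion must bottom out — every smaller MK$(2k+1)$ invoked must itself be either a prime-power case (Lemma~\ref{lem:prime}), one of the standing small designs (MK$(15)$, MK$(27)$, \dots), or an order already disposed of earlier in the induction. Because the exception sets $\mathcal{E}_1$, $\mathcal{E}_2$, $\mathcal{E}_3$ of the frame theorems are themselves nonempty, a few values of $u$ will resist the first and second choices of construction and will need a tailored recursion (as in the $u=55$ case), so the real work is verifying, value by value, that the web of recursions closes. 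I would present this as a short case analysis, perhaps in a table listing for each $u \in \mathcal{E}_1 \cup\{55\}$ the construction used, the frame type, and the auxiliary ingredients, citing Lemmas~\ref{6,12-const}, \ref{12,24-const}, Theorem~\ref{fr-dirprod}, and the existence results of Section~\ref{sec:frames}, and deferring any genuinely new small frame starters to the Appendix.
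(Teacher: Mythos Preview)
Your overall plan matches the paper's proof almost exactly: most values fall to Lemma~\ref{6,12-const} with $n=(u-1)/6$, a handful to Lemma~\ref{12,24-const}, and a few stragglers to Theorem~\ref{fr-dirprod} with a tailored factorization. So conceptually you are on the right track.

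There are two concrete points where your sketch slips. First, your $u=55$ discussion contains an arithmetic error: for Lemma~\ref{6,12-const} you need $v=12n+3$, so $n=(v-3)/12=(111-3)/12=9$, not $(v-1)/12$. A Kirkman frame of type $(6;12)^9$ exists by Lemma~\ref{Kfr-6,12small}, so $u=55$ falls immediately to the $(6;12)$ route and needs no special treatment; this is exactly how the paper handles it. Second, and more substantively, the case $u=115$ resists all of the recursive routes you list. Here $n=(u-1)/6=19$ is an exception in Theorem~\ref{Kfr-6,12}, $(u-1)/12$ is not an integer, and for Theorem~\ref{fr-dirprod} one needs $gmn=114=2\cdot 3\cdot 19$; every factorization with $m\neq 2,6$ and $n\ge 4$ forces an inadmissible MK ingredient or an unavailable frame. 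The paper resolves this case with a direct computer construction: a strong starter in $\Z_{115}$ together with a cyclic STS$(115)$, giving a Kirkman frame of type $(1;2)^{115}$. Your closing remark about ``deferring any genuinely new small frame starters to the Appendix'' gestures at this possibility, but your plan as written treats direct construction as a last resort for frame ingredients, not for the MK itself; you should be explicit that at least one value on the list requires a new starter-based MK, not merely a new frame.
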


\begin{proof} 
For $u \in \{55,319,355,391,415,445,451,493,745,481,799,805,1315\}$, we apply 
Lemma~\ref{6,12-const} using 
$n=9,53,59,65,69,74,75,82,124,130,133,134,$ and $219$ (respectively). 
Theorem~\ref{Kfr-6,12} provides the existence of the required $(6;12)^n$ Kirkman frames.
For $u \in \{205,265,649,697\}$, we apply Lemma~\ref{12,24-const} using  Kirkman frames of type $(12;24)^n$ for $n=17,22,54,$ and $58$, which come from Theorem~\ref{Kfr-12,24}. 

There are three special cases: $u=115,145,235$. The case $u=115$ 
is constructed directly using a strong starter in $\Z_u$ and cyclic STS$(u)$ on $\Z_u$. 
A strong starter used to construct this Kirkman frame of type $(1;2)^{115}$ is  listed in the Appendix.
 For $u=145$, we apply Theorem~\ref{fr-dirprod} with $g=12$, $n=4$, $m=3$, and 
an MK$(75)$ from Lemma~\ref{lem:prime}. Theorem~\ref{fr-dirprod} is also used for  $u=235$ with 
$g=1$, $n=13$, $m=18$, and an MK$(39)$ from Lemma~\ref{lem:prime}. 
\end{proof}

\rk In  \cite{MSV}, an indirect product construction was used to fill in the gaps left by PBD closure.  
The applications are given in several tables.  Instead of 
just filling in the remaining values of $\mathcal{E}$, we show how to use Kirkman 
frames with maximum $3$-GDD subdesigns and a single computer-generated construction 
to take care of the values in $\mathcal{E}_1$. Note that a complete existence result 
for Kirkman frames of type $(6;12)^n$ would also settle the existence of 
MK$(12n+3)$; see Theorem~\ref{Kfr-6,12} and Lemma~\ref{6,12-const}.

\subsection{The case $v=2u+3$}
\label{sec:2u+3}

Here, we construct KTS$(v)$ with maximum STS$(u)$ subdesigns in the congruence class $v\equiv 9 \pmod{12}$.  In this case, $v=2u+3$ where 
$u\equiv 3\pmod{6}$.
We first note the existence of the two smallest designs.

\begin{lemma}
\label{smallcases}
There exists an KTS$(2u+3)$ which contains as a subdesign an STS$(u)$ for $u=3$ and 
$u=9$.
\end{lemma}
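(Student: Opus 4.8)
The plan is to handle the two cases $u=3$ and $u=9$ directly, since each asks for a single small design with a prescribed subsystem. For $u=3$, we need a KTS$(9)$ containing a sub-STS$(3)$, i.e.\ a single block. The unique STS$(9)$ is the affine plane AG$(2,3)$, which is resolvable: its $12$ blocks partition into $4$ parallel classes of $3$ blocks each, and any one block together with the other two blocks in its parallel class exhibits the design. Since an STS$(3)$ is just one block, \emph{every} block of the KTS$(9)$ serves as the required subdesign, so the claim for $u=3$ is immediate once we recall that KTS$(9)$ exists (it is AG$(2,3)$ with its line-parallelism).

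For $u=9$, we need a KTS$(21)$ containing a sub-STS$(9)$. This is exactly the content of Example~\ref{ex:21}: the Howell-design construction there produces a KTS$(21)$ whose blocks resolve into $10$ parallel classes, with an embedded STS$(9)$ on the nine adjoined points $\{i_j : i=0,1,3,\ j=0,1,2\}$, the blocks of the form $\{i_0,i_1,i_2\}$ together with the blocks completing the affine plane on these nine points. So the proof for $u=9$ consists of citing Example~\ref{ex:21} (with the explicit resolution classes recorded in the Appendix), and verifying that the embedded nine points carry a genuine STS$(9)$ whose blocks all appear among the blocks of the KTS$(21)$.

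The writeup would therefore be short: state that KTS$(9)$ is AG$(2,3)$ and that any block is a sub-STS$(3)$; then point to Example~\ref{ex:21} and the Appendix for the KTS$(21)$ with its sub-STS$(9)$. The only thing requiring any care is confirming that the nine adjoined points in Example~\ref{ex:21} actually receive all $12$ blocks of an STS$(9)$ and not merely a partial structure --- but this is built into the construction (the last resolution class supplies the blocks $\{i_0,i_1,i_2\}$, and the pairs among the nine points that are ``missing'' from the Howell design are precisely those covered by the remaining affine-plane blocks on these points, which are included). I do not anticipate a genuine obstacle here; this lemma is purely a pair of base cases feeding the recursive machinery developed later in the section.
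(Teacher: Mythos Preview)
Your proposal is correct and matches the paper's proof almost exactly: for $u=3$ the paper simply notes that any block of a KTS$(9)$ is a sub-STS$(3)$, and for $u=9$ it cites Example~\ref{ex:21} (the Howell design H$(9,12)$ construction) and the Appendix. The only thing the paper adds that you don't mention is a brief alternate viewpoint for the $u=9$ case --- expanding a Fano plane by an RTD$(3,3)$ and observing that the result resolves --- but this is offered as a remark, not as the main argument.
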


\begin{proof}
For $u=3$, there exists a KTS$(9)$ and the subdesign is a single block of size $3$. 

For $u=9$, Example~\ref{ex:21} produces the required KTS$(21)$ by adjoining elements to an H$(9,12)$ Howell design.  See also Figure~\ref{howell}. We offer here an alternate viewpoint for this construction.  Begin with a Fano plane, expand each block by an RTD$(3,3)$.  It turns out that the set of blocks so produced can be resolved; see the Appendix for an explicit resolution.
Notice that the first 9 classes listed can be decomposed into 7 disjoint RTD$(3,3)$s, where each of these classes  contains one block from each RTD$(3,3)$. The last parallel class comes from the groups of these TDs.  A similar construction appears in \cite{KO} and this design is also among those listed in the compilation \cite{CCIL} of KTS$(21)$ with nontrivial automorphism group.  
\end{proof}

Our first recursive construction is an easy application of Wilson's Fundamental Construction,
\cite{Wilsonconst}.

\begin{prop}
\label{WFC-triple}
If there exists a KTS$(v)$ which contains as a subdesign an STS$(u)$, then there exists a KTS$(3v)$ which contains as a subdesign an STS$(3u)$.
\end{prop}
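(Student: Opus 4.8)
The plan is to build the KTS$(3v)$ by applying Wilson's Fundamental Construction with the point set inflated by a factor of $3$, using a weight-$3$ function everywhere and filling in with small ingredient designs. Concretely, let $(V,\cB)$ be a KTS$(v)$ containing an STS$(u)$ subdesign on $V' \subseteq V$. First I would form the point set $V \times \{1,2,3\}$. The idea is to replace each block $B \in \cB$ by a copy of an RTD$(3,3)$ (equivalently a resolvable $3$-GDD of type $3^3$, which exists) on $B \times \{1,2,3\}$, with groups $\{b\} \times \{1,2,3\}$ for $b \in B$. Since the original KTS$(v)$ is resolvable, each parallel class of $(V,\cB)$ of size $v/3$ gives rise, via the three parallel classes of each RTD$(3,3)$, to three parallel classes on $V \times \{1,2,3\}$; this yields $3 \cdot (v-1)/2$ parallel classes covering all pairs of points lying in distinct groups $\{b\} \times \{1,2,3\}$.

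Next I would account for the pairs within each group $\{x\} \times \{1,2,3\}$, $x \in V$: these are exactly the $v$ triples $\{x\}\times\{1,2,3\}$, and together they form a single additional parallel class. Adding it, we obtain $3(v-1)/2 + 1 = (3v-1)/2$ parallel classes, which is the correct number for a KTS$(3v)$, and every pair of points of $V \times \{1,2,3\}$ is covered exactly once (pairs in a common group by the last class, pairs in distinct groups within a common inflated block $B\times\{1,2,3\}$ by the RTD blocks, and each such pair lies in exactly one $B$ since $(V,\cB)$ is an STS). So the construction does give a KTS$(3v)$.

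Finally I would locate the STS$(3u)$ subdesign. Restricting attention to the blocks $B \in \cB$ with $B \subseteq V'$ — that is, the blocks of the sub-STS$(u)$ — the corresponding RTD$(3,3)$s live entirely on $V' \times \{1,2,3\}$; together with the $u$ within-group triples $\{x\}\times\{1,2,3\}$ for $x \in V'$, these blocks cover every pair of $V'\times\{1,2,3\}$ exactly once, hence form an STS$(3u)$ on $V'\times\{1,2,3\}$, and every one of its blocks is a block of the KTS$(3v)$ just constructed.

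The only real point requiring care — and the main obstacle, though a mild one — is the bookkeeping of parallel classes: one must check that the three resolutions of each RTD$(3,3)$ can be aligned consistently across all blocks of a fixed parallel class of $(V,\cB)$ so that their union is genuinely a partition of $V\times\{1,2,3\}$, and that the leftover within-group triples assemble into exactly one further class. Both are immediate once one fixes, for every inflated block, a labelling of its three parallel classes as ``first/second/third''; there is no interaction between distinct original parallel classes, so no compatibility condition is needed beyond this. This is precisely the standard WFC mechanism, so I would present it briefly rather than in full detail.
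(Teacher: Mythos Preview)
Your proposal is correct and follows essentially the same approach as the paper: both apply Wilson's Fundamental Construction with weight $3$, replacing blocks by RTD$(3,3)$s and inheriting the resolution. Your write-up is considerably more detailed than the paper's two-sentence sketch, explicitly handling the extra parallel class of within-group triples and locating the STS$(3u)$ subdesign, but the underlying construction is identical.
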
 

\begin{proof}
Apply Wilson's Fundamental Construction, giving weight 3 and replacing each block of the KTS with 
an RTD$(3,3)$.  Parallel classes of the resulting design are induced from those of the KTS$(v)$ and RTDs.
\end{proof}

Note that when $v=2u+1$, the resulting design is a KTS$(6u+3)$ which contains 
as a subdesign an STS$(3u)$.

The key recursive construction for this case uses Kirkman frames of type $(3;6)^n$. 

\begin{thm}
\label{mainframe}
If there exists a Kirkman frame of type $6^n$ which contains as a subdesign a $3$-GDD 
of type $3^n$, then there is a KTS$(6n+3)$ which contains as a subdesign an STS$(3n)$.
\end{thm}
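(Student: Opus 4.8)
The plan is to "fill in the holes" of the Kirkman frame of type $6^n$ with copies of a KTS$(9)$, after adjoining a small set $Z$ of three new points. Concretely, suppose $\cF$ is a Kirkman frame of type $6^n$ on $V=\bigcup_{i=1}^n W_i$ with groups (holes) $W_i$, $|W_i|=6$, and suppose the embedded $3$-GDD $\cD$ of type $3^n$ sits on $\bigcup_{i=1}^n Y_i$ with $Y_i\subseteq W_i$, $|Y_i|=3$. Put $Z=\{\alpha,\beta,\infty\}$, a set of three new points, and form $V^*=V\cup Z$, so $|V^*|=6n+3$. For each $i$, place a KTS$(9)$ on $W_i\cup Z$; since the unique KTS$(9)$ has the property that any chosen block can be taken as a parallel class all by itself, arrange the copy so that $Z$ is one of its blocks, and moreover so that the three points of $Z$ together with $Y_i$ — wait, that is six points, not a block — instead so that $\{\infty\}\cup Y_i$ is a block of the STS$(7)$... but $\{\infty\}\cup Y_i$ has four points. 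The correct bookkeeping is the one used in Lemmas~\ref{6,12-const} and \ref{12,24-const}: a KTS$(9)$ contains a block $Y_i$ (a triple inside $W_i$), and the groups of $\cD$ are exactly these triples $Y_i$. So I arrange each KTS$(9)$ on $W_i\cup Z$ so that (a) $Z$ is one of its four parallel classes' blocks, and (b) $Y_i$ is one of its blocks, lying in the parallel class of the KTS$(9)$ other than the one containing $Z$. Since KTS$(9)$ has $4$ parallel classes and is point-transitive and block-transitive enough for this, both arrangements can be met simultaneously.

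Next I count and assemble the resolution. The frame $\cF$ has $6n$ partial parallel classes, grouped into $5$ for each hole $W_i$ (a $3$-frame of type $6^n$ has $\frac{6}{2}=3$... let me recount: a Kirkman frame of type $6^n$ has, for each group $W_i$, exactly $|W_i|/2 = 3$ partial classes... no: the deficiency count gives $t\cdot 1 = 6$? The standard fact is that a $3$-frame of type $h^u$ has $h/2$ partial classes missing each group when $h$ is even; here $h=6$ so $3$ partial classes per hole, $3n$ total). Each KTS$(9)$ has $4$ parallel classes; one of them, call it $R_i'$, contains the block $Z$. I use the other $3$ parallel classes of the $i$-th KTS$(9)$ to complete the $3$ frame-classes associated to hole $W_i$ (each frame-class misses $W_i$; adjoining a parallel class of $W_i\cup Z$ that avoids $Z$ would leave $Z$ uncovered, so instead I add back the $3$ classes of the KTS$(9)$ restricted to $W_i$ — these are the parallel classes of $W_i\cup Z$ other than $R_i'$, but they cover $Z$; the resolution of the $i$-th KTS$(9)$ restricted to $W_i$ gives a parallel class of $W_i$, which is exactly what each frame-class for hole $W_i$ lacks). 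That yields $3n$ full parallel classes of $V^*$. Finally I form one more class $R' = \bigcup_{i=1}^n (R_i'\setminus\{Z\}) \cup \{Z\}$; each $R_i'\setminus\{Z\}$ is a parallel class of $W_i$, so $R'$ is a parallel class of $V^*$. Total: $3n+1$ parallel classes, and $3n+1 = (6n+3-1)/2$, the right number for a KTS$(6n+3)$. Every pair is covered exactly once: pairs within some $W_i\cup Z$ by that KTS$(9)$, pairs between distinct $W_i,W_j$ by the frame blocks.

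For the subdesign, the $3$-GDD $\cD$ of type $3^n$ has groups $Y_i$. Each $Y_i\cup\{\infty\}$ is a set of four points; inside the $i$-th KTS$(9)$ the block $Y_i$ was chosen, and the STS$(7)$ structure is not what we need — rather, I observe that $\{\infty\}$ together with the blocks of $\cD$ plus the triples $Y_i$... the cleanest statement: the triples $Y_i$ for $i=1,\dots,n$ are the groups of $\cD$, and taking $\{\infty\}\cup Y_i$ as the "filled" group means I must place on $Y_i\cup\{\infty\}$ (four points — impossible for an STS). Instead $\cD$ has type $3^n$, so an STS$(3n)$ results by filling each group $Y_i$ directly with the block $Y_i$ itself (a trivial STS$(3)$), which is already a block of the $i$-th KTS$(9)$ and hence of our design; this is consistent because $3n\equiv 3\pmod 6$ when $n$ is odd. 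So $\cA \cup \{Y_1,\dots,Y_n\}$ is an STS$(3n)$ on $\bigcup Y_i$, all of whose blocks lie in the KTS$(6n+3)$. The main obstacle is the simultaneous-arrangement claim in the first paragraph — that each KTS$(9)$ on $W_i\cup Z$ can be coordinatized so that $Z$ is a block, $Y_i$ is a block, and $Y_i$ and $Z$ lie in different parallel classes — which follows from the explicit unique structure of KTS$(9)$ (it is $AG(2,3)$ with its parallel classes of lines), but deserves a sentence of verification; everything else is routine counting of the kind already carried out for Lemmas~\ref{6,12-const} and \ref{12,24-const}.
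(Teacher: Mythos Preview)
Your overall strategy is the same as the paper's: adjoin three new points $Z$, fill each hole $W_i\cup Z$ with a KTS$(9)$ so that $Z$ is a block, combine the $R_i'$ classes into one final class $R'$, and then observe that the sub-$3$-GDD of type $3^n$ together with the triples $Y_i$ form an STS$(3n)$. That is exactly what the paper does, except that it phrases the filling as an RTD$(3,3)$ on the three groups $X_i,Y_i,Z$ (where $X_i=W_i\setminus Y_i$) and then adds the groups as the last parallel class; since a KTS$(9)$ is precisely an RTD$(3,3)$ together with its groups, the two formulations are equivalent.

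There is, however, a genuine error in your arrangement claim. You ask that the KTS$(9)$ on $W_i\cup Z$ be coordinatized so that $Z$ and $Y_i$ are both blocks lying in \emph{different} parallel classes, and you assert this ``follows from the explicit unique structure of KTS$(9)$''. It does not: in $AG(2,3)$ any two disjoint lines are parallel, so if $Z$ and $Y_i$ are both blocks of the same KTS$(9)$ they are forced to lie in the \emph{same} parallel class. Your condition (b) is therefore impossible as stated.

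Fortunately, the fix is immediate and is exactly what the paper's RTD$(3,3)$ phrasing makes transparent: take the KTS$(9)$ so that $\{Z,Y_i,X_i\}$ \emph{is} the parallel class $R_i'$. This is always achievable (just take the RTD$(3,3)$ with groups $X_i,Y_i,Z$ and adjoin the groups). Then $R'=\bigcup_i\{X_i,Y_i\}\cup\{Z\}$, so each $Y_i$ is a block of the final design, and your closing argument for the STS$(3n)$ subdesign goes through unchanged. Nothing else in your resolution count depends on where $Y_i$ sits, so once you replace ``different'' by ``same'' in the arrangement claim, the proof is correct and coincides with the paper's.
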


\begin{proof}
Let $V = \cup_{i=1}^n V_i $ where $V_i = X_i \cup Y_i$ and $|X_i|=|Y_i|=3$
for $i=1,2,\dots,n$. 
Suppose $\cF$ is a Kirkman frame of type $6^n$ defined on $V$ where the $3$-GDD has 
groups $\cup Y_i$ for $i=1,2,\dots,n$.  Adjoin a set of 3 new elements, $Z=\{\alpha,\beta,\infty\}$, 
and fill in the holes of the frame with an RTD$(3,3)$ defined on
$X_i \cup Y_i \cup Z$.  This results in a resolvable $3$-GDD of type $3^{2n+1}$ 
which contains as a subdesign a $3$-GDD of type $3^n$.  Fill in the groups of this design as an additional resolution class.  The result is a KTS$(6n+3)$ with a maximum subdesign STS$(3n)$.
\end{proof}

\rk This construction actually produces a $\Diamond$-type design, \cite{RS}.  That is, the 
resulting KTS$(6n+3)$ contains as subdesigns a KTS$(9)$ and an STS$(3n)$ which 
intersect in a single block $Y_i$. 

The next construction uses a tripling construction for Kirkman frames. 

\begin{lemma}
\label{triple}
If there exists a Kirkman frame of type $(3;6)^n$, then there is a KTS$(18n+3)$ 
which contains as a subdesign an STS$(9n)$.
\end{lemma}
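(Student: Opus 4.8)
The plan is to mimic the proof of Theorem~\ref{mainframe}, but with a larger "filling" in place of the RTD$(3,3)$. Given a Kirkman frame $\cF$ of type $(3;6)^n$ on $V=\cup_{i=1}^n V_i$ with $V_i = X_i \cup Y_i$, $|X_i|=|Y_i|=3$, and with the $3$-GDD of type $3^n$ having groups $Y_i$, I would first inflate the whole configuration by a factor of $3$. Concretely, apply the frame direct product (Proposition~\ref{Kfr-dp}) with a resolvable TD$(3,3)$ to obtain a Kirkman frame of type $(9;18)^n$, whose underlying $3$-GDD has type $9^n$ with groups $Y_i\times\{1,2,3\}$. (Equivalently, one can invoke Proposition~\ref{WFC-triple}-style inflation directly on the frame.) The group size has now grown from $6$ to $18$, which is exactly what is needed to land at order $18n+3$ after adjoining $3$ infinite points.

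Next I would fill in each hole. Adjoin a set $Z=\{\alpha,\beta,\infty\}$ of $3$ new points. Each hole of the type $(9;18)^n$ frame has the form $(X_i\cup Y_i)\times\{1,2,3\}$, a set of size $18$; together with $Z$ this is a set of size $21$. Fill it with a copy of a KTS$(21)$ that contains a sub-STS$(9)$ on $(Y_i\times\{1,2,3\})\cup\{\infty\}$ — this design exists by Lemma~\ref{smallcases} (Example~\ref{ex:21}). Arrange, as usual, that the resolution class of this KTS$(21)$ which contains a designated triple through $\infty$ can be peeled off and recombined across all $i$; more carefully, one KTS$(21)$ has $10$ parallel classes, and the STS$(9)$ subdesign supplies $4$ of them as restrictions (since $9n/3 = 3n$ and the STS$(9)$ has $4$ parallel-class-worth of blocks), so the remaining $6$ classes of each KTS$(21)$ complete the $6n$ frame classes while the STS$(9)$ pieces and the $3$ new points assemble into $\lfloor\cdot\rfloor$ additional classes. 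The count that needs checking: $6n$ (from frame holes, each supplying $6$ classes) $+$ the classes coming from the STS$(9)$ subsystems glued with $Z$; a KTS$(18n+3)$ needs $(18n+2)/2 = 9n+1$ classes, leaving $3n+1$ classes to be produced from the STS$(9)$ fragments and $Z$ — precisely what an STS$(9n)$ on $\cup_i Y_i\times\{1,2,3\}$ together with a KTS$(3)$-like class on $Z$ provides, once we note $3n+1 = (9n)/3 \cdot ? $ wait: an STS$(9n)$ has $(9n-1)/2$ pairs-classes, not obviously $3n+1$. So the correct bookkeeping is instead: the STS$(9)$ subdesigns are resolvable into $4$ classes each, these $4$ families of $n$ disjoint STS$(9)$-classes merge into $4$ classes of an STS$(9n)$-like object on $\cup_i (Y_i\times\{1,2,3\})\cup Z$, yielding a KTS$(9n+3)$ structure there; filling in this intermediate KTS$(9n+3)$'s STS$(9n)$ gives the final subdesign, and its classes line up with the leftover hole-classes. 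I would present this as: the construction yields a KTS$(18n+3)$ with a sub-KTS$(9n+3)$ on $(\cup_i Y_i\times\{1,2,3\})\cup Z$, which in turn contains the promised sub-STS$(9n)$.

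The main obstacle, and the step I would write out most carefully, is the resolution bookkeeping: verifying that the parallel classes of the $n$ copies of KTS$(21)$ can be partitioned so that exactly the classes restricting to the STS$(9)$ subsystems are set aside, that these recombine coherently with the set $Z$ across all $i$ simultaneously (this requires the KTS$(21)$ of Example~\ref{ex:21} to have its STS$(9)$ subdesign resolvable \emph{compatibly} with the ambient resolution — i.e., it is a $\Diamond$-type or sub-KTS situation, which is exactly the "alternate viewpoint" noted in Lemma~\ref{smallcases}), and that the leftover $6n$ hole-classes plus the recombined classes total $9n+1$. The inflation step and the existence of the KTS$(21)$ ingredient are routine given the cited results; the delicate point is the simultaneous alignment of sub-resolutions, which is why the remark after Theorem~\ref{mainframe} about $\Diamond$-type designs is the right tool to cite.
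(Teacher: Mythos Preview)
Your high-level plan matches the paper's: inflate the $(3;6)^n$ frame by an RTD$(3,3)$ to obtain a $(9;18)^n$ frame, adjoin three new points $Z$, and fill each hole (together with $Z$) by a KTS$(21)$ carrying a sub-STS$(9)$ aligned on the sub-GDD group. However, your resolution bookkeeping goes astray, and the attempted repair introduces a genuine obstruction.

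First, a small slip: the sub-STS$(9)$ should sit on the nine points $Y_i\times\{1,2,3\}$, not on $(Y_i\times\{1,2,3\})\cup\{\infty\}$, which has ten points. The STS$(9n)$ is then the union of the inflated $3$-GDD of type $9^n$ together with these $n$ disjoint STS$(9)$s on its groups; no infinite point is involved in the subdesign.

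The serious issue is the class count. A Kirkman frame of type $18^n$ has $18/2=9$ partial classes per hole, hence $9n$ in total, not $6n$. Each filling KTS$(21)$ has $10$ parallel classes; nine of them complete the nine partial frame classes missing that hole, and the tenth---chosen to contain the block $Z$---combines (with one copy of $Z$) across all holes into a single global class. This gives exactly $9n+1$ classes, as required. No compatibility between the resolution of the STS$(9)$ subdesign and the ambient resolution of the KTS$(21)$ is needed; one only needs the KTS$(21)$ to have $Z$ as a block disjoint from the STS$(9)$, which the design of Example~\ref{ex:21} visibly satisfies.

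Your proposed fix---requiring the STS$(9)$ to be a sub-KTS of the KTS$(21)$ so that four classes can be ``set aside''---cannot work: a KTS$(21)$ never contains a sub-KTS$(9)$, since the Rees--Stinson bound $v\ge 3u$ forces $v\ge 27$. Consequently the asserted sub-KTS$(9n+3)$ does not arise, and the $\Diamond$-type machinery is a red herring here. The construction is simpler than you feared: the subdesign is an STS only, with no resolvability demanded of it.
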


\begin{proof}
Give weight 3 and replace blocks with an RTD$(3,3)$ to produce a Kirkman frame of 
type $(9;18)^n$. Add 3 new elements and fill the groups of the frame with a
KTS$(21)$ which contains as a subdesign an STS$(9)$ (see Lemma~\ref{smallcases})   
where the STS$(9)$ are aligned on the groups of the sub-GDD of type $9^n$. 
\end{proof}

We are now in a position to prove Theorem~\ref{thm:2u+3}.  Using the existence 
of Kirkman frames of type $(3;6)^n$, Theorem~\ref{pbdclosure3,6}, together 
with the main frame construction, Theorem~\ref{mainframe},  we can 
construct KTS$(6u+3)$ which contain as a subdesign an STS$(3u)$ for $u\geq 5$, 
$u\equiv 1\pmod{2}$ and $u\notin \mathcal{E}_2$. Lemma~\ref{smallcases} 
takes care of $u < 5$.  This leaves 6 cases to consider. 

\begin{lemma}
There exist KTS$(6u+3)$ which contain as a subdesign an STS$(3u)$ for
$$u \in \{87,111,139\}.$$
\end{lemma}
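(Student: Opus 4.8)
The plan is to handle the three leftover values $u\in\{87,111,139\}$ by building, for each one, a Kirkman frame of type $(3;6)^n$ with $6n+3 = 6u+3$, i.e. $n = u$, and then invoking Theorem~\ref{mainframe}; equivalently, one finds a suitable intermediate frame whose ``filling'' yields the required KTS directly. Since $87,111,139$ are precisely the elements of $\mathcal{E}_2$ that are $\le 139$ (the remaining exceptions $83,107,179$ are deferred to the final lemma), the task is to produce Kirkman frames of type $(3;6)^n$ for $n\in\{87,111,139\}$ by some route not covered by the PBD-closure of $\{5,7,9\}$ used in Theorem~\ref{pbdclosure3,6}. The natural tool is Proposition~\ref{Kfr-addgroup}, the ``add a group'' construction: to get a $(3;6)^{mn'+1}$ frame one needs a $(3m;6m)^{n'}$ frame and a $(3;6)^{m+1}$ frame.

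First I would try to realize each target $n$ in the form $mn'+1$ with small ingredients available from Section~\ref{sec:frames}. For $n=87$: write $87 = 8\cdot 10 + 7$? that is not of the form $mn'+1$; instead $87 = 6\cdot 14+3$, not helpful, but $87 = 2\cdot 43+1$, so take $m=2$, $n'=43$: one needs a Kirkman frame of type $(6;12)^{43}$ and a Kirkman frame of type $(3;6)^{3}$ — but $(3;6)^3$ does not exist ($n$ must be odd and $\ge 5$), so this route fails. Try $87 = 86\cdot 1+1$ — trivial. A better decomposition: $87 = 4\cdot ? +1$ has no integer solution; $87 = 8\cdot ?+1$ no; $87 = 10\cdot ?+1$ no; so Proposition~\ref{Kfr-addgroup} alone is awkward. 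Instead I expect the intended route is via Lemma~\ref{6,12-const}-style fillings or a PBD on $\{5,7,8,9\}$-type block sets applied to $3$ (Theorem~\ref{PBD-frames}): e.g. a $\mathrm{PBD}(87,K)$ with every $k\in K$ admitting a $(3;6)^k$ frame, using a $\mathrm{TD}$-based PBD such as $87 = \mathrm{TD}$ on blocks of sizes in $\{5,7,9,\ldots\}$ — but $87\equiv 3\pmod 4$ is one of the known PBD$(\{5,7,9\})$ definite/possible exceptions precisely because no such nice PBD exists, which is exactly why these cases are hard. The realistic plan, then, is a direct computer construction: a Kirkman frame starter of type $(3;6)^{87}$ over $\mathbb{Z}_{87}$ (or over $\mathbb{Z}_{29}\times\mathbb{Z}_3$, using the subgroup structure), found by hill climbing as in the Remark after Example~\ref{ex:75}, and listed in the Appendix; likewise for $n=111$ (over $\mathbb{Z}_{111}$ or $\mathbb{Z}_{37}\times\mathbb{Z}_3$) and $n=139$ (over $\mathbb{Z}_{139}$, a prime, so an ordinary strong starter plus cyclic $3$-GDD base blocks). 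Then apply Theorem~\ref{mainframe} to each to obtain the desired KTS$(6u+3)$ with sub-STS$(3u)$.

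So the key steps, in order, are: (i) for each $u\in\{87,111,139\}$, exhibit a Kirkman frame of type $(3;6)^u$ — for $u=139$ directly from a strong starter over the prime field $\mathbb{Z}_{139}$ together with base blocks for a cyclic $3$-GDD of type $3^{139}$ (short-orbit blocks handling the difference $139/3$-issue do not arise since $3\nmid 139$, so all orbits are full, simplifying the search); for $u=87$ and $u=111$ from frame starters over $\mathbb{Z}_{87}$ and $\mathbb{Z}_{111}$ respectively, where one must also supply base blocks for the $3$-GDD of type $3^{29}$ (resp. $3^{37}$) that sits inside, i.e. short-orbit blocks on the subgroup of order $3$; (ii) cite Theorem~\ref{mainframe} to convert each frame into a KTS$(6u+3)$ containing STS$(3u)$; (iii) note, as in the earlier remarks, that these are $\Diamond$-type designs. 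Alternatively, for $u=87 = 6\cdot 14+3$ or $u=111$ one could attempt a recursive build from a $(6;12)^n$ or $(12;24)^n$ frame via Lemmas~\ref{6,12-const}/\ref{12,24-const} with an appropriate weighting, but the parameters $6u+3$ for these $u$ ($525,669,837$) do not factor conveniently through $12n+3$ or $24n+3$, so I would not pursue that.

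The main obstacle is step (i): there is no slick recursive decomposition for $n\in\{87,111,139\}$ — that is exactly why they survived PBD-closure — so one is forced into a direct search for frame starters in groups of order $87$, $111$, $139$, which are moderately large. The hill-climbing approach is known to be effective (references \cite{DinStin-hill,Stinson-hill,DinStin-howellhill}), but one must verify that the search actually terminates with a valid starter, and that the required coordinated conditions (the frame-starter conditions (1)--(2), the cross conditions (2)--(4) for the type $(h;2h)$ starter, and the $3$-GDD base-block conditions including the short orbit through the order-$3$ subgroup) are all simultaneously satisfiable; I expect each can be found and the explicit starters relegated to the Appendix, with this lemma's proof consisting of little more than the pointer ``see the Appendix'' followed by the application of Theorem~\ref{mainframe}.
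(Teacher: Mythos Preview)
Your proposal has a genuine gap: you are hoping to produce Kirkman frames of type $(3;6)^u$ for $u\in\{87,111,139\}$ by direct computer search, but no such starters appear in the Appendix, and you yourself acknowledge the search is unverified. These three values lie in $\mathcal{E}_2$ precisely because the paper does \emph{not} construct $(3;6)^u$ frames for them; so invoking Theorem~\ref{mainframe} with $n=u$ is circular unless you actually supply the starters. (Incidentally, your group orders are off: a starter for type $(3;6)^u$ lives in a group of order $3u$ with a subgroup of order $3$, so $\mathbb{Z}_{261}$, $\mathbb{Z}_{333}$, $\mathbb{Z}_{417}$ --- not $\mathbb{Z}_{87}$, $\mathbb{Z}_{111}$, $\mathbb{Z}_{139}$.)

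The paper's proof avoids any new direct constructions entirely and uses recursive tools you overlooked. For $u=87$ and $u=111$, it applies Lemma~\ref{triple} (the frame tripling construction): a Kirkman frame of type $(3;6)^n$ yields a KTS$(18n+3)$ with sub-STS$(9n)$. Taking $n=29$ gives KTS$(525)$ with sub-STS$(261)$, i.e.\ the case $u=87$; taking $n=37$ gives KTS$(669)$ with sub-STS$(333)$, i.e.\ $u=111$. Both $29$ and $37$ are outside $\mathcal{E}_2$, so the required $(3;6)^n$ frames exist by Theorem~\ref{pbdclosure3,6}. For $u=139$, the paper observes $139\equiv 1\pmod 6$, so Theorem~\ref{thm:mkts} supplies an MK$(279)$, a KTS$(279)$ with sub-STS$(139)$; then Proposition~\ref{WFC-triple} (weight-$3$ inflation) gives a KTS$(837)$ with sub-STS$(417)$, which is exactly the case $u=139$. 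The key idea you missed is that one does not need a $(3;6)^u$ frame at all --- it suffices to hit the target KTS$(6u+3)$ via a tripling of something one third the size, and those smaller ingredients are already in hand.
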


\begin{proof} For $u=87$ and $u=111$, apply the Kirkman frame 
tripling construction, Lemma~\ref{triple} with $n=29$ and $n=37$ respectively. 
The largest case, $u=139\equiv 1 \pmod{6}$, is handled by Theorem~\ref{thm:mkts} and Proposition~\ref{WFC-triple}.
\end{proof}

Our results here leave only three remaining open cases, namely KTS$(6u+3)$ which contain STS$(3u)$ for $u \in \{ 83, 107, 179\}$. 
The existence of two $\Diamond$-type designs could be used to complete these cases. 
If there exists a KTS$(69)$ which contains as subdesigns a KTS$(21)$ and an STS$(33)$ 
which intersect in an STS$(9)$, then there is a KTS$(6u + 3)$ which contains 
an STS$(3u)$ for $u=107$.  If there exists a KTS$(117)$ which contains as subdesigns 
a KTS$(21)$ and an STS$(57)$ intersecting in an STS$(9)$, then the other two cases, 
$u=83$ and $179$, could also be handled.

\section{More general subdesigns}
\label{sec:general}

\subsection{Subdesigns at a fixed offset from maximality}
\label{sec:gap}

An MK$(2v+1)$ is extremal in the sense that the gap, call it $w$, between the KTS order and double the subsystem order is as small as possible, namely $w=1$.  We have also considered the case $w=3$.  In this section, we offer some preliminary constructions and results for more general values of $w$.

We begin with a product construction that achieves a prescribed gap value.

\begin{lemma}
\label{flat}
If there exists an MK$(2v+1)$, an RTD$(3,w)$, and an STS$(w)$ with chromatic index at most $\frac{1}{2}(w-1)+v$, then there exists a KTS$(2vw+w)$ containing a sub-STS$(vw)$.
\end{lemma}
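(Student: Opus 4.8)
The plan is to mimic the structure of Lemma~\ref{6,12-const} and Theorem~\ref{fr-dirprod}: start from the MK$(2v+1)$, delete a point to obtain a Kirkman frame of type $(1;2)^v$, inflate by an RTD$(3,w)$ to reach a Kirkman frame of type $(w;2w)^v$, and then fill the holes with copies of a KTS$(3w)$ built from an STS$(w)$. Concretely, let the MK$(2v+1)$ live on $V^\prime \cup \{\infty\}$ with sub-STS$(v)$ on $V^\prime$; removing $\infty$ gives a Kirkman frame of type $2^v$ with a $3$-GDD of type $1^v$ on $V^\prime$. Expand each point by the $w$ symbols of an RTD$(3,w)$ (equivalently a TD$(4,w)$): the frame becomes type $(2w)^v$ and its sub-GDD becomes type $w^v$, with groups indexed by $V^\prime$. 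This accounts for the factor $2vw$; the remaining $w$ points $W$ will carry the extra structure.

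The key step is to fill each hole. A hole of the frame has size $2w$; together with the new set $W$ of size $w$ it has $3w$ points, so I would fill it with a KTS$(3w)$ built from the given STS$(w)$ by adjoining $2w$ points to its blocks --- this is exactly the kind of Room-square/starter construction described after Lemma~\ref{lem:prime}, except that here I need the STS$(w)$ to sit on $W$ and I need one designated parallel class of each filled KTS$(3w)$ to restrict to a common parallel class on $W$ so the $v$ copies can be glued. The chromatic-index hypothesis is what makes this possible: an STS$(w)$ with chromatic index $\le \tfrac12(w-1)+v$ has its $\tfrac13\binom{w}{2}=\tfrac{w(w-1)}{6}$ blocks partitioned into at most $\tfrac{w-1}{2}+v$ colour classes, and by merging/padding one arranges exactly $\tfrac{w-1}{2}+v$ partial parallel classes on $W$; of these, $v$ of them each contribute one ``through $W$'' class that is shared across all holes, and the other $\tfrac{w-1}{2}$ combine with a KTS$(w)$ on $W$. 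Counting parallel classes: each filled KTS$(3w)$ supplies $\tfrac{3w-1}{2}$ classes; $\tfrac{2w}{2}=w$ of these are ``frame classes'' that plug into the $2vw/(2w)$... more precisely the frame of type $(2w)^v$ has $v\cdot\tfrac{2w}{2}\cdot$ (frame structure) partial classes, and filling holes plus the shared classes on $W$ plus a KTS$(w)$ assembles them into the $\tfrac{2vw+w-1}{2}$ parallel classes of a KTS$(2vw+w)$.

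I would then observe that the sub-STS$(w^v)$-type GDD inside the inflated frame, having groups of size $w$ indexed by $V^\prime$, gets each of its groups completed by the STS$(w)$ that was used in filling the corresponding hole (all of which are aligned on $W$, hence on the group), so these fit together into an STS on $V^\prime \times (\text{$w$ symbols})$, i.e.\ a sub-STS$(vw)$, and its blocks are a subset of the blocks of the KTS$(2vw+w)$.

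The main obstacle is the bookkeeping around the chromatic index: one must check that ``chromatic index at most $\tfrac12(w-1)+v$'' provides exactly enough colour classes to (i) supply the $v$ shared-through-$W$ classes needed to glue the $v$ filled KTS$(3w)$'s, and (ii) leave $\tfrac{w-1}{2}$ classes to merge with the parallel classes of a KTS$(w)$ on $W$ --- and that the filled KTS$(3w)$ on $X_i \cup Y_i \cup W$ can genuinely be built so that its parallel classes restrict on $W$ to a prescribed colouring of the STS$(w)$. This is the standard ``adjoin points to pairs of a frame/Room structure and finish with a partially resolved STS'' argument from Example~\ref{ex:21} and the Howell-design construction, so the machinery is available; the delicate point is simply verifying the class count matches $\tfrac{2vw+w-1}{2}$ with no leftover blocks.
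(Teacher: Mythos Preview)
Your approach has a genuine structural gap: the STS$(w)$ cannot simultaneously sit on the set $W$ of new points (which you need for gluing the $v$ filled holes together) and on the sub-GDD groups $Y_i \subset V_i$ (which you need in order to complete the $3$-GDD of type $w^v$ to an STS$(vw)$). You explicitly place the STS$(w)$ on $W$, and then assert it ``hence'' lies on the group $Y_i$; but $W$ is disjoint from every $Y_i$, so the groups of the sub-GDD never get filled and no sub-STS$(vw)$ emerges. Conversely, if you place a copy of the STS$(w)$ on each $Y_i$ (so the subdesign works), then there is no common sub-KTS on $W$, and the standard frame-filling argument for assembling the parallel classes breaks down. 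Your use of the chromatic-index hypothesis does not repair this: the colour classes of an STS$(w)$ on $W$ are \emph{partial} parallel classes on $W$, and you give no mechanism for covering the points of $W$ they miss while simultaneously covering all of $\bigcup_i V_i$.

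The paper sidesteps the whole issue by \emph{not} deleting a point and \emph{not} passing to a frame. It applies Wilson's Fundamental Construction directly to the MK$(2v+1)$, giving every point weight $w$ via an RTD$(3,w)$. This yields a resolvable $3$-GDD of type $w^{2v+1}$ (with $vw$ parallel classes) containing a sub-GDD of type $w^v$. Now fill \emph{every} group with a copy of the STS$(w)$; the $v$ copies on the sub-GDD groups complete it to the desired sub-STS$(vw)$, and the block set is that of an STS$((2v+1)w)$. The only remaining task is resolution, and this is where the chromatic index enters: among the $vw$ inherited classes one may take $v$ to be ``flat'' (each a union over levels of a single MK parallel class), and because the STS$(w)$ has chromatic index at most $\tfrac{1}{2}(w-1)+v$, its blocks partition into partial parallel classes in which every point is missed at most $v$ times. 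One then breaks up sufficiently many flat classes group-by-group and recombines the pieces with the STS$(w)$ partial classes, following \cite[Lemma~2.1]{CDLL}. So the chromatic-index bound is purely a resolvability device, not a tool for building an ingredient KTS$(3w)$.
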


\begin{proof} 
Apply Wilson's Fundamental Construction, giving every element of the MK$(2v+1)$ weight $w$ and replacing blocks with RTD$(3,w)$.
The result is a resolvable GDD of type $w^{2v+1}$ with a sub-GDD of type $w^v$.  Fill groups with STS$(w)$, so that the result is an STS$(2vw+w)$ containing an STS$(vw)$ subdesign.  It remains to resolve the blocks of the larger STS.  For this, we follow a similar strategy as in \cite[Lemma 2.1]{CDLL}, which we summarize below.     By our assumption on the chromatic index of the STS$(w)$, there exists a resolution of blocks into partial parallel classes such that every element is missed equally often and at most $v$ times. The MK$(2v+1)$ induces $v$ parallel classes which are `flat' with respect to the direct product. 
So, we may break up sufficiently many of these classes to combine with partial parallel classes of the STS$(w)$ in each group.
\end{proof}

\rk
Assuming $v \ge 7$, the assumption on chromatic index can be achieved using a KTS$(w)$ if $w \equiv 3 \pmod{6}$, a `Hanani triple system' if $w \equiv 1 \pmod{6}$, $w \ge 19$, or directly for $w \in\{7,13\}$; see \cite[II.2.79-83]{Handbook}.

We next give a singular style product construction that yields some different instances of KTS$(2u+w)$ having subdesigns of order $u$. This construction uses a different type of 
GDD subdesign where the groups of the frame and the GDD subdesign coincide.
Suppose $\cF$ is a uniform Kirkman frame of type $h^u$ defined on a set $V$ with 
groups $V_i$ for $i=1,2,\dots,u$ and block set $\cB$. Let $\cD$ be a $3$-GDD of 
type $h^t$ defined on $ \cup _{i=1}^{t} V_i$ with groups $V_i$ for $i=1,2,\dots, t$ and 
block set $\cA$. $\cD$ is a subdesign of $\cF$ if  $\cA \subseteq \cB$.  
Note that in this case, $h\equiv 0 \pmod{2}$ and $u\geq 2t+1$. 
This type of Kirkman frame with GDD subdesign  arises naturally from KTS with 
maximum STS subdesigns. For example, deleting an element from the STS$(7)$ in an 
MK$(15)$ results in a Kirkman frame of type $2^7$ which contains as a subdesign 
a $3$-GDD of type $2^3$. 

\begin{lemma}
\label{sip1}
Suppose $v \equiv 1 \pmod{6}$, so that there exists an MK$(2v+1)$.

(a)  For $w \equiv 1 \pmod{6}$, $w \ge 7$, 
there exists a KTS$(v(w+1)+1)$ containing a sub-STS of order $\frac{1}{2}(v-1)(w+1)+1$ and a sub-KTS$(w+2)$ intersecting in one element.

(b) For $w \equiv 3 \pmod{6}$ and $z \in \{3,9,15\}$ with $w \ge z$ and $w+z \ge 18$, 
there exists a KTS$(v(w+z)+z)$ containing a sub-STS of order $\frac{1}{2}(v-1)(w+z)+z$ and sub-KTS$(w+2z)$ intersecting in a sub-KTS$(z)$.
\end{lemma}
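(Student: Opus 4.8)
The plan is to use a singular-style product construction, analogous to the Howell-design construction in Section~2 and to Lemma~\ref{flat}, but built around the Kirkman-frame-with-GDD-subdesign picture just introduced (where the groups of the frame and of the $3$-GDD subdesign coincide). The starting ingredient is the MK$(2v+1)$, which by the Proposition in Section~2 is equivalent to a Kirkman frame $\cF$ of type $2^v$ containing a $3$-GDD of type $1^v$; equivalently, deleting a point from the sub-STS$(v)$ of the MK$(2v+1)$ produces a Kirkman frame of type $2^v$ containing a $3$-GDD of type $2^{(v-1)/2}$, i.e.\ the groups of the sub-GDD are $(v-1)/2$ of the $v$ groups of the frame. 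Inflate this frame by a suitable weight. For part~(a) give every point weight $(w+1)/2$ (this is an integer since $w \equiv 1 \pmod 6$), replacing blocks by an RTD$(3,(w+1)/2)$ via Wilson's Fundamental Construction; for part~(b) give weight $(w+z)/2$ (an integer since $w,z$ are both odd). By the direct-product construction for frames (Proposition~\ref{Kfr-dp}, applied here in its WFC form), the result is a Kirkman frame of type $(w+1)^v$ (resp.\ $(w+z)^v$) containing as a subdesign a $3$-GDD of the same uniform type on $(v-1)/2$ of the groups.

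Next, fill in the holes. Adjoin a set $Z$ of new points: in part~(a), $|Z|=2$; in part~(b), $|Z|=z$. Into each hole of size $w+1$ (resp.\ $w+z$) together with $Z$, fill a KTS on $w+1+|Z| = w+3$ points in case~(a), or on $w+z+|Z| = w+2z$ points in case~(b); more precisely we want a KTS$(w+2)$ — wait, the target sub-KTS has order $w+2$ in~(a) and $w+2z$ in~(b), so in~(a) we fill with a KTS on $(w+1)+1$ points where only one new point is used inside each hole-filling KTS, and the second new point together with the "leftover" parallel class is handled as in Lemma~\ref{6,12-const}/Theorem~\ref{mainframe}. The cleanest bookkeeping mirrors Theorem~\ref{mainframe}: each hole of the frame is filled by a KTS on $(w+1)+|Z|$ points that contains the block $Z$ (using the existence of KTS of those orders, which hold since $(w+1)+|Z| \equiv 3 \pmod 6$ in both cases by the parity/residue choices), one special parallel class per hole contains $Z$, these are stitched together across all $v$ holes into a single parallel class of the whole design in the usual way, and the remaining parallel classes complete the frame's partial resolution classes. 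The key point is that the $3$-GDD subdesign of the inflated frame lives on $(v-1)/2$ groups, and on those groups we instead fill the group $\cup$ (the appropriate subset of $Z$) by an STS, so the union of the sub-GDD blocks with those STS blocks forms the advertised sub-STS of order $\tfrac{1}{2}(v-1)(w+1)+1$ (resp.\ $\tfrac{1}{2}(v-1)(w+z)+z$). One chosen hole, filled entirely rather than partially, yields the sub-KTS$(w+2)$ (resp.\ sub-KTS$(w+2z)$), and its intersection with the sub-STS is the single adjoined point (resp.\ the sub-KTS$(z)$ on $Z$, which requires a KTS$(z)$ — available for $z\in\{3,9,15\}$).

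The conditions $w \ge z$, $w+z \ge 18$, and $w \ge 7$ are exactly what is needed to guarantee the hole-filling KTS exist with the required internal sub-STS and sub-KTS structure: $w+2 \ge 9$, $w+2z \ge 18$, etc., and these orders are all admissible for Kirkman triple systems, so one invokes Lemma~\ref{smallcases}-type small designs together with the general KTS existence theorem. I would also need a KTS on the full hole order containing \emph{both} an STS on the "old" part and a KTS on the $Z$ part — i.e.\ a $\Diamond$-type or IGDD-style design — which is where the lower bounds on $w$ and the restriction $z \in \{3,9,15\}$ come from; this is the analogue of needing a KTS$(15)$ with sub-STS$(7)$ in Lemma~\ref{6,12-const} or a KTS$(21)$ with sub-STS$(9)$ in Lemma~\ref{triple}.

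\textbf{Main obstacle.} The arithmetic of resolvability — confirming that after filling holes the counts of parallel classes match up and that the designated "flat" classes can be broken and recombined — is routine and follows the templates of Theorem~\ref{mainframe} and Lemma~\ref{flat}. The genuine obstacle is securing the correct \emph{nested} structure in the hole-filling ingredient: we need, for each relevant order, a Kirkman triple system simultaneously containing a sub-STS of one order and a sub-KTS of another that meet in exactly the prescribed intersection (a point, or a sub-KTS$(z)$). Establishing existence of these small nested designs — presumably a short list handled by the direct/computer constructions of the Appendix, by Lemma~\ref{smallcases}, and by the $\Diamond$-type remarks following Theorem~\ref{mainframe} — is the crux, and the hypotheses $w \ge 7$, $w \ge z$, $w + z \ge 18$, $z \in \{3,9,15\}$ are precisely the range in which such ingredients are known to exist.
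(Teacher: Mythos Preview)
Your overall plan---delete a point of the sub-STS$(v)$ from the MK$(2v+1)$ to get a Kirkman frame of type $2^v$ with a sub-GDD of type $2^{(v-1)/2}$, inflate by weight $(w+1)/2$ or $(w+z)/2$, then fill holes---matches the paper exactly. However, you overcomplicate the filling step and misidentify the obstacle.

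In~(a) only \emph{one} new point $\infty$ is adjoined (not two): each hole together with $\infty$ is filled by an ordinary KTS$(w+2)$, which exists simply because $w+2\equiv 3\pmod 6$. In~(b) a set $Z$ of $z$ new points is adjoined, and each hole is filled by a KTS$(w+2z)$ containing a common sub-KTS$(z)$ on $Z$; such a design exists by the Rees--Stinson result on Kirkman subsystems because $w\ge z$ gives $w+2z\ge 3z$. No nested $\Diamond$-type ingredient is required. The sub-STS of the final design is \emph{not} assembled from sub-STS sitting inside the hole-fillings; rather, it consists of the blocks of the inflated sub-GDD of type $(w+1)^{(v-1)/2}$ (resp.\ $(w+z)^{(v-1)/2}$) together with the \emph{entire} filling KTS on each of those $(v-1)/2$ groups, viewed as STS on those groups plus $\infty$ (resp.\ plus $Z$). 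The advertised sub-KTS of order $w+2$ (resp.\ $w+2z$) is then simply the filling on any one group \emph{outside} the sub-GDD, and it meets the sub-STS precisely in $\{\infty\}$ (resp.\ in the common KTS$(z)$ on $Z$). So your ``main obstacle'' dissolves: the only ingredients needed beyond the frame are an RTD$(3,(w+1)/2)$ or RTD$(3,(w+z)/2)$ (guaranteed by the stated hypotheses on $w$ and $z$) and a KTS with a Kirkman subsystem, both of which are standard.
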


\begin{proof} 
(a)
Delete a point from the STS$(v)$ subdesign of an MK$(2v+1)$, producing a Kirkman frame of type $2^v$ containing a sub-GDD of type $2^{(v-1)/2}$.  Give every point weight $(w+1)/2$ and replace blocks with RTD$(3,(w+1)/2)$.  The result is a Kirkman frame of type $(w+1)^v$ having a sub-GDD of type $(w+1)^{(v-1)/2}$.
Add one new element $\infty$ and fill groups with KTS$(w+2)$ having a common element $\infty$.

(b) 
We begin as in (a), except with weight $(w+z)/2$. Note that our assumption implies that this weight is at least $9$, and hence an RTD$(3,(w+z)/2)$ exists.  This gives a Kirkman frame of type $(w+z)^v$ having a sub-GDD of type $(w+z)^{(v-1)/2}$.
Add a set $Z=\{\infty_i: i=1,\dots,z\}$ of new elements and fill groups with KTS$(w+2z)$ having a   common subsystem on $Z$. We note that such a KTS exists since $w \ge z$.
The result is a KTS of order $v(w+z)+z$ having a sub-STS of order $\frac{1}{2}(v-1)(w+z)+z$ (from the sub-GDD), a sub-KTS of order $w+2z$ (from one group outside this GDD) and these subdesigns intersect in the subdesign on $Z$, a KTS$(z)$.
\end{proof}

We demonstrate part (a) of this construction with a bound for gap size $w=7$.  For this proof and some to follow, it is helpful to extend the exponential notation for frames with sub-GDDs to the nonuniform setting $(g_1;h_1)^{u_1} \cdots (g_t;h_t)^{u_t}$.

\begin{prop}
\label{gap7}
For all integers $u \equiv 1\pmod{6}$, $u \ge 2551$, there exists a KTS$(2u+7)$ containing a sub-STS$(u)$.
\end{prop}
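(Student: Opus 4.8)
The plan is to use Lemma~\ref{sip1}(a) with the gap parameter fixed at $w=7$, so that $w+1=8$ and the construction yields a KTS of order $v\cdot 8+1$ containing a sub-STS of order $\frac12(v-1)\cdot 8+1 = 4(v-1)+1 = 4v-3$. We want the outer order to be $2u+7$; but Lemma~\ref{sip1}(a) as stated produces orders of the form $8v+1$, which is $\equiv 1 \pmod 6$ only for suitable $v$, and more importantly the subsystem it produces has order $4v-3$, not the ``maximal-minus-6'' value. So the real content of this proposition is: iterate or combine Lemma~\ref{sip1}(a) with one of the product constructions so that the gap $w=7$ is preserved while the subsystem grows to exactly half the outer order minus $7/2$-worth of slack. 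Concretely, I would first check what $u$ and $v$ must satisfy: if the final design is a KTS$(2u+7)$ with a sub-STS$(u)$, and it is produced from an MK$(2v+1)$ by filling the frame of type $(w+1)^v = 8^v$ plus one point, then $2u+7 = 8v+1$, forcing $u = 4v-3$, and the sub-STS has order $4(v-1)+1 = 4v-3 = u$ — so in fact the subsystem \emph{is} maximal-minus-six in the appropriate sense and everything is consistent once $u \equiv 1 \pmod 6$, i.e. $4v-3 \equiv 1$, i.e. $v \equiv 1 \pmod 3$; combined with $v \equiv 1 \pmod 6$ (needed so the MK$(2v+1)$ exists) this is automatic. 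So the skeleton is a one-shot application of Lemma~\ref{sip1}(a).

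The next step is to pin down for which $v$ an MK$(2v+1)$ is available, and translate the resulting range of $u = 4v-3$ into the claimed threshold $u \ge 2551$. By Theorem~\ref{thm:mkts}, an MK$(2v+1)$ exists for \emph{all} $v \equiv 1 \pmod 6$. So Lemma~\ref{sip1}(a) immediately gives a KTS$(8v+1)$ with a sub-STS$(4v-3)$ for every $v \equiv 1 \pmod 6$, $v \ge 7$. This produces $u = 4v-3 \in \{25, 49, 73, \dots\}$, i.e. $u \equiv 1 \pmod{24}$. That is far too sparse to cover all $u \equiv 1 \pmod 6$. Hence the honest plan must interleave a second construction to fill the residue classes $u \equiv 7, 13, 19 \pmod{24}$. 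The natural candidates are: (i) Proposition~\ref{WFC-triple} / Wilson's Fundamental Construction with weight~$3$ to multiply an existing KTS$(2u'+7)$-with-sub-STS$(u')$ up to a KTS$(6u'+21)$-with-sub-STS$(3u')$ — but this changes the gap from $7$ to $21$, so it does not directly help; (ii) more promisingly, apply Lemma~\ref{sip1}(a) not to a plain MK$(2v+1)$ but to variants, or apply Lemma~\ref{flat} with $w=7$ (legal since a Hanani-type or direct resolution for the STS$(7)$ gives chromatic index $\le 3 \le \frac12(7-1)+v = v+3$ once $v \ge 0$), which yields a KTS$(14v+7)$ containing a sub-STS$(7v)$ from an MK$(2v+1)$. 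Here $2u+7 = 14v+7$ gives $u = 7v$ and the sub-STS has order $7v = u$: again maximal with gap $7$. Since $u = 7v$ with $v \equiv 1 \pmod 6$ gives $u \equiv 7 \pmod{42}$, combining Lemma~\ref{flat} (outputs $u = 7v$) and Lemma~\ref{sip1}(a) (outputs $u = 4v-3$) as $v$ ranges over $v \equiv 1 \pmod 6$ covers a union of arithmetic progressions. The plan is to show these progressions, together, hit every residue of $u \bmod 6$ (they must, since $u \equiv 1 \pmod 6$ is the only constraint) for all $u$ past some explicit bound — and then verify by a short computation that $2551$ is that bound.

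The last step is the arithmetic bookkeeping that produces the constant $2551$. I expect the argument runs: write $u \equiv 1 \pmod 6$; we want to express $u$ either as $7v$ with $v \equiv 1 \pmod 6$, $v \ge 7$ (so $u \ge 49$, $u \equiv 7 \pmod{42}$), or as $4v-3$ with $v \equiv 1 \pmod 6$, $v \ge 7$ ($u \ge 25$, $u \equiv 25 \pmod{24}$), or — to fill remaining residues — by combining these via the product Proposition~\ref{WFC-triple} or the generalized-gap instances of Lemma~\ref{sip1}(b), or by applying Lemma~\ref{flat} with a larger admissible $w$ and then a tripling step. Since $\gcd$-type covering of residues modulo $\mathrm{lcm}(24,42)=168$ requires a bounded number of small exceptions, the set of $u \equiv 1 \pmod 6$ not covered is finite, and its maximum is $2545$, giving the stated threshold $u \ge 2551$. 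The main obstacle, and the place where care is needed, is exactly this residue-covering: verifying that every $u \equiv 1 \pmod 6$ with $u \ge 2551$ admits at least one of the available representations, and in particular checking that the ``$v \ge 7$'' and weight-feasibility side conditions (RTD$(3,m)$ existence, chromatic-index bounds) never obstruct the needed representation in that range. Everything downstream of that — invoking Theorem~\ref{thm:mkts} for the ingredient MK's, Lemma~\ref{sip1}(a) or Lemma~\ref{flat} to assemble, and checking the sub-STS lands at order exactly $u$ — is routine.
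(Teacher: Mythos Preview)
Your proposal has a genuine gap at exactly the point you flag as ``the main obstacle.'' Lemma~\ref{sip1}(a) with $w=7$ yields $u=4v-3$ with $v\equiv 1\pmod 6$, hence $u\equiv 1\pmod{24}$; Lemma~\ref{flat} with $w=7$ yields $u=7v$ with $v\equiv 1\pmod 6$, hence $u\equiv 7\pmod{42}$. Modulo $\mathrm{lcm}(24,42)=168$ these two families together cover only $10$ of the $28$ residue classes $u\equiv 1\pmod 6$, so the uncovered set is \emph{infinite}, not finite with maximum $2545$. Your proposed patches do not rescue this: Proposition~\ref{WFC-triple} sends gap $7$ to gap $21$; Lemma~\ref{sip1}(b) applies only to $w\equiv 3\pmod 6$; and using Lemma~\ref{flat} with a larger $w$ changes the gap away from $7$. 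None of these produces further KTS$(2u+7)$ with sub-STS$(u)$.

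The paper's proof takes a fundamentally different route. It does not try to write $u$ directly as the output of a single application of Lemma~\ref{sip1}(a) or Lemma~\ref{flat}. Instead it first builds, via Wilson's construction on a truncated TD$(9,n)$ with $(6;12)$-frame ingredients for block sizes $7,8,9$, a nonuniform Kirkman frame with sub-GDD of type $(6n;12n)^7(6p;12p)^1(6q;12q)^1$. It then adds $9$ new points (one to the subdesign) and \emph{fills the groups}: the first eight groups with the Lemma~\ref{sip1}(a) designs (which carry the needed sub-KTS$(9)$ meeting the sub-STS in one point), and the last group with a Lemma~\ref{flat} design (which need not). This mixing inside a single frame gives the representation $u=168t+24s+42r+7$ with $n=4t\ge 4s,7r$; since $\gcd(24,42)=6$, the $24s+42r$ part hits every residue $\equiv 0\pmod 6$ once $s,r$ range freely, and the TD$(9,4t)$ bound together with a computer check yields the threshold $2551$. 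The missing idea in your plan is precisely this frame-based combination, and in particular the role of the sub-KTS$(9)$ from Lemma~\ref{sip1}(a), which is what allows these ingredient designs to be glued along a common set of new points.
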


\begin{proof}
Begin with a TD$(9,n)$.  Truncate all but $p$ points in the 8th group and all but $q$ points in the 9th group.  Give weights $(6;12)$ to all remaining points, setting up simultaneous applications of Wilson's fundamental construction.  Replace the blocks, which have sizes in $\{7,8,9\}$, with Kirkman frames having subdesigns of type $(6;12)^x$ for $x \in \{7,8,9\}$.
The result is a Kirkman frame with a sub-GDD of type $(6n;12n)^7 (6p;12p)^1 (6q;12q)^1$. 
Add a set of 9 new elements with 1 of these points assigned  to the subdesign.

From Lemma~\ref{sip1}(a) with $w=7$, there exists a KTS$(48t+9)$ containing a sub-STS$(24t+1)$ and a sub-KTS$(9)$ intersecting in one element.   Put $n=4t$ and fill the first seven groups of the above frame with these designs, aligning as usual the sub-STS on the groups of the sub-GDD and the sub-KTS$(9)$ on the new elements.  Similarly, put $p=4s$ and fill the 8th group with a KTS$(48s+9)$ containing a sub-STS$(24s+1)$.

From Lemma~\ref{flat}, there exists a KTS$(84r+21)$ containing a sub-STS$(42r+7)$ for each integer $r \ge 0$.  Letting $q=7r+1$, we fill the last group, plus extra points, with a KTS$(12q+9)$ containing a sub-STS$(6q+1)$. (Note that there is no condition needed on existence of a sub-KTS$(9)$ for this last group.)

The existence of a TD$(9,n)$ is guaranteed \cite[III.3.81]{Handbook} for $n > 570$, which implies our construction succeeds for all admissible $u > 25000$.   However, many smaller values of $n$ admit a TD$(9,n)$; see the table at \cite[III.3.87]{Handbook}.  Using  a computer to check below the guarantee, we have verified that any $u \ge 2551$, $u \equiv 1 \pmod{6}$, admits a representation as $u=168t+24s+42r+7$, where $n=4t \ge 4s,7r$ and there exists a TD$(9,n)$.
\end{proof}

\rk
Our bound on $u$ could be considerably lowered if a Kirkman frame with sub-GDD of type $(6;12)^4$ were available.  In this case, a TD$(5,n)$ could be used in place of a TD$(9,n)$.  

The case $w=9$ can be handled similarly, but with different ingredient designs.

\begin{prop}
\label{gap9}
For all integers $u \equiv 3 \pmod{6}$, $u \ge 21429$, there exists a KTS$(2u+9)$ containing a sub-STS$(u)$.
\end{prop}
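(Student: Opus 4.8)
The plan is to mirror the architecture of the proof of Proposition~\ref{gap7} as closely as possible, replacing each ingredient by the version appropriate to gap size $w=9$. As there, one begins with a transversal design TD$(9,n)$, truncates all but $p$ points in the eighth group and all but $q$ points in the ninth, and assigns every surviving point the weight $(6;12)$, setting up simultaneous applications of Wilson's Fundamental Construction. The blocks have sizes in $\{7,8,9\}$, and each is replaced by a Kirkman frame with a sub-GDD of type $(6;12)^x$, $x\in\{7,8,9\}$, which are available from Lemma~\ref{Kfr-6,12small}. The result is a Kirkman frame with a sub-GDD of type $(6n;12n)^7(6p;12p)^1(6q;12q)^1$. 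One then adjoins a set of new elements, a prescribed subset of which is designated to belong to the eventual STS subdesign and to carry a common sub-KTS, and fills the nine holes with KTS's containing subdesigns aligned on the sub-GDD groups; the blocks of the sub-GDD together with the subdesign blocks of the filled KTS's then assemble into an STS$(u)$ inside the resulting KTS$(2u+9)$.

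The hole-filling ingredients are where the two cases genuinely differ. Lemma~\ref{sip1}(a), used to fill the main holes in Proposition~\ref{gap7}, requires $w\equiv1\pmod 6$ and so is unavailable when $w=9$; in its place I would use Lemma~\ref{sip1}(b) with $w=z=9$, which (via Theorem~\ref{thm:mkts}, an RTD$(3,9)$, and a KTS$(27)$ containing a sub-KTS$(9)$) gives, for every $v\equiv1\pmod 6$, a KTS$(18v+9)$ containing a sub-STS$(9v)$ and a sub-KTS$(27)$ meeting it in a sub-KTS$(9)$. Adjoining $27$ new elements ($9$ of them designated), one fills a hole of size $12m$ by such a design — which forces $m\equiv 0\pmod 9$ — with its sub-STS aligned on the sub-GDD group together with the $9$ designated points. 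The seven main holes and the eighth hole are filled this way; the ninth hole is filled, exactly as the last hole in Proposition~\ref{gap7} (where no sub-KTS condition is imposed), by a KTS$(18v+9)$ containing a sub-STS$(9v)$ coming from Lemma~\ref{flat} with $w=9$, whose hypotheses hold because AG$(2,3)$ is an STS$(9)$ of chromatic index $4$. This produces a KTS$(2u+9)$ with a sub-STS$(u)$, but only for $u$ running over a single arithmetic progression.

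The main obstacle is precisely arithmetical, and it is what forces the large threshold $u\ge 21429$. Once the hole-filling constraints are imposed, the sub-GDD group sizes produced by Lemma~\ref{sip1}(b) are all divisible by $3(w+z)=54$, and the ninth-hole ingredient contributes a term that is also $54$-divisible; consequently this single construction realizes only $u\equiv 9\pmod{54}$, not the full class $u\equiv 3\pmod 6$. The coprimality that let the $w=7$ construction sweep out all residues at once came from the prime value $w=7$ entering Lemma~\ref{flat}, and has no analogue for $w=9$ — which is why one cannot simply require ``$w+z\ge 18$'' away. To reach the remaining residues I would patch in further constructions: tripling the KTS$(2w+3)$ of Theorem~\ref{thm:2u+3} by Proposition~\ref{WFC-triple} gives a KTS$(6w+9)$ with a sub-STS$(3w)$ and covers $u\equiv 9\pmod{18}$; taking $z=15$ in Lemma~\ref{sip1}(b) inside the same TD framework handles $u\equiv 15\pmod{18}$; and the residue class $u\equiv 3\pmod{18}$ — the most delicate — is obtained either from a nonuniform variant of the frame construction leaving one group of flexible size, or from a small stock of directly (hill-climbed) seed designs fed into the recursion. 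Finally, below the range where a TD$(9,n)$ is guaranteed to exist one checks by computer that every admissible $u\ge 21429$ admits a valid representation in the parameters of the chosen constructions; this verification, together with the bookkeeping needed to stitch the residue classes together, is the bulk of the argument.
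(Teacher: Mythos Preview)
Your overall architecture matches the paper's exactly through the construction of the frame with sub-GDD of type $(6n;12n)^7(6p;12p)^1(6q;12q)^1$, the adjunction of $27$ new points ($9$ designated), and the filling of the first eight holes via Lemma~\ref{sip1}(b) with $w=z=9$ (forcing $n=9t$, $p=9s$). You also correctly diagnose the arithmetical obstruction: if the ninth hole is filled by Lemma~\ref{flat} with $w=9$, every term contributing to $u$ is divisible by $9$, and $u$ is trapped in a single residue class modulo $54$.

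The gap is in your patching. Your proposal to take $z=15$ in Lemma~\ref{sip1}(b) violates its hypothesis $w\ge z$: since the gap produced by that lemma equals $w$, you are forced to take $w=9$, whence $z\le 9$ (and $w+z\ge 18$ then pins $z=9$). So the ingredient you invoke for $u\equiv 15\pmod{18}$ does not exist as stated, and your treatment of $u\equiv 3\pmod{18}$ is not a construction but a hope (``nonuniform variant or hill-climbed seeds''). The paper avoids residue-splitting entirely by filling the ninth hole with a different design: inflate a frame of type $(3;6)^{2r+1}$ (available for $r\in\{2,\dots,12\}$ from Lemma~\ref{Kfr-small}) by an RTD$(3,11)$ to get type $(33;66)^{2r+1}$, then fill its groups with the KTS$(75)$ of Example~\ref{ex:75}, which contains a sub-STS$(33)$ and a \emph{disjoint} sub-KTS$(9)$. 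This yields a KTS$(132r+75)$ with sub-STS$(66r+33)$; setting $q=11r+4$ gives $u=378t+54s+66r+33$. Because $\gcd(66,54)=6$, letting $r$ range over $\{2,\dots,12\}$ hits every residue $\equiv 3\pmod 6$ modulo $54$ within a single construction, and the bound $21429$ then drops out of the TD$(9,9t)$ guarantee at $t\ge 55$ together with a finite check. The idea you are missing is precisely to inject a factor coprime to $9$ --- here the inflation weight $11$ and the seed subdesign order $33$ --- into the ninth-group contribution.
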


\begin{proof}
As in the proof of Proposition~\ref{gap7}, we work from a frame of type $(12n)^7 (12p)^1 (12q)^1$ having a sub-GDD of type $(6n)^7 (6p)^1 (6q)^1$.  
Add 27 new elements, 9 of which are assigned to the subdesign.

Use Lemma~\ref{sip1}(b) with $v = 6t+1$ and $w=z=9$ to produce a
KTS$(108t+27)$ containing a sub-STS$(54t+9)$ and sub-KTS$(27)$ intersecting in an STS$(9)$.
Put $n=9t$ and fill the first seven groups of our frame with these designs, aligning the subdesigns as usual.  Similarly, put $p=9s$ and fill the 8th group with a KTS$(108s+27)$ containing a sub-STS$(54s+9)$ and sub-KTS$(27)$ intersecting in an STS$(9)$.

We fill the 9th group with a design constructed as follows.  Begin with a 
frame of type $(3;6)^{2r+1}$, where $r \in \{2,3,4,\dots,12\}$, from Lemma~\ref{Kfr-small},
and expand using an RTD$(3,11)$.
Example~\ref{ex:75} (see also the Appendix) gives a direct construction of a
KTS$(75)$ containing a sub-STS$(33)$ and a disjoint KTS$(9)$.  
We fill groups of the frame with this design, pulling out the common KTS$(9)$ on new elements.  This yields a KTS$(132r+75)$ containing a sub-STS$(66r+33)$.  Letting $q=11r+4$, so that $6q+9=66r+33$, we place this design on the 9th group.  Note that if we remove the common sub-KTS$(27)$ and sub-KTS$(9)$ subdesigns used on the first 8 groups, then this design in the 9th group need not have those subdesigns.

The result is a KTS of order $2u+9$ having a sub-STS$(u)$, where $u=378t + 54s + 66r + 33 = 
6(63t+9s+11r+4) + 3$.

A TD$(9,9t)$ exists for all integers $t \ge 55$; see \cite[III.3.87]{Handbook}.  A straightforward computer-assisted check shows that every integer $k \ge 3571$ can be represented as $63t+9s+11r+4$ for some $t \ge 55$, $0 \le s \le t$, and $r \in \{2,3,4,\dots,12\}$.  This implies our construction succeeds for the claimed lower bound $u \ge 6\cdot3571+3 = 21429$.
\end{proof}

\rk
Again, this is merely a preliminary bound obtained from a concise set of available designs.  The construction succeeds in practice for many smaller values of $u$.  In particular, Proposition~\ref{WFC-triple} nearly completely settles the case $u \equiv 9 \pmod{18}$.

Next, we consider general gap size $w$.

\begin{thm}
\label{general-gaps}
Suppose $w \equiv 1$ or $3 \pmod{6}$.  There exists a KTS$(2u+w)$ containing a sub-STS$(u)$ for all $u \equiv w \pmod{6}$, $u \ge 24w^2+O(w)$.
\end{thm}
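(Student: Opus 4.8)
The plan is to carry out the construction behind Propositions~\ref{gap7} and~\ref{gap9} in a uniform way. Fix $w$ and let $\epsilon\in\{1,3\}$ be its residue modulo $6$; the cases $w\in\{1,3,7,9\}$ are already covered by Theorems~\ref{thm:mkts} and~\ref{thm:2u+3} and Propositions~\ref{gap7} and~\ref{gap9}, so we may assume $w\ge 13$ (and take the implied constant in $O(w)$ large enough that the absolute lower bound on $n$ appearing below is never binding). The skeleton is a truncated transversal design: begin with a TD$(9,n)$, truncate its last two groups to $p$ and $q$ points with $q\le p\le n$, give every surviving point weight $(6;12)$, and apply Wilson's Fundamental Construction \cite{Wilsonconst}, replacing each block---necessarily of size $7$, $8$, or $9$---by a Kirkman frame of type $(6;12)^x$, $x\in\{7,8,9\}$, supplied by Lemma~\ref{Kfr-6,12small}. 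This produces a Kirkman frame with a sub-GDD of type $(6n;12n)^7(6p;12p)^1(6q;12q)^1$.

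Next, adjoin a set $Z$ of new points---of size $w+2$ when $\epsilon=1$, of size $w+2z$ for a suitable $z\in\{3,9,15\}$ when $\epsilon=3$---and designate $c$ of them ($c=1$, resp.\ $c=z$) as lying in the subdesign, so that $|Z|-2c=w$. Fill the first eight groups with the gap-$w$ designs furnished by Lemma~\ref{sip1}(a) (when $\epsilon=1$) or Lemma~\ref{sip1}(b) (when $\epsilon=3$): each is a KTS on the group together with $Z$, containing a sub-STS aligned along the sub-GDD group and a resolution-aligned common sub-KTS on $Z$, built over an MK$(2v+1)$ with $v\equiv 1\pmod 6$ drawn from Theorem~\ref{thm:mkts}. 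Fill the ninth group with a gap-$w$ design in which no common sub-KTS on $Z$ is required---only a sub-STS meeting the sub-GDD group together with the $c$ designated points. For $\epsilon=1$ such a family comes directly from Lemma~\ref{flat}, whose chromatic-index hypothesis on STS$(w)$ holds for all $w\ge 7$ by the remark following that lemma; for $\epsilon=3$ one uses instead a family of the type built in Example~\ref{ex:75}---a Kirkman frame of type $(3;6)^{2r+1}$ expanded by a fixed RTD and then filled---whose contribution to $u$ runs through a long arithmetic progression. Gluing along $Z$ exactly as in the proof of Lemma~\ref{6,12-const} yields a KTS$(2u+w)$ with a sub-STS$(u)$.

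It remains to choose parameters so every admissible $u$ above the threshold is realized. Expressing the sub-GDD group sizes through the fill parameters gives, for $\epsilon=1$,
\[ u \;=\; 21k(w+1)+3k'(w+1)+(6r+1)w, \]
with $k$ the parameter of the seven full-group fills, $k'$ that of the eighth, and $r$ that of the ninth (and $w+1$ replaced by $w+z$ when $\epsilon=3$). Since $6(w+1)=2\cdot 3(w+1)\equiv 0$, one has $6w\equiv -6\pmod{3(w+1)}$, so as $r$ varies the term $(6r+1)w$ runs through \emph{every} residue of $w+6\mathbb{Z}$ modulo $3(w+1)$; hence, given a target $u\equiv w\pmod 6$, first choose $r$ in a window of length $(w+1)/2$ making $u-(6r+1)w\equiv 0\pmod{3(w+1)}$, then solve $u-(6r+1)w=3(w+1)(7k+k')$ for integers $k\ge k'\ge 0$ with $k$ large. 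The side conditions are: a TD$(9,n)$ with $n=k(w+1)/2$ (guaranteed once $n$ exceeds an absolute constant, by \cite[III.3.81]{Handbook}), the truncation inequalities $q\le p\le n$, and a lower bound on $r$ from the Lemma~\ref{flat} ingredient. Because $r$ may be forced as large as about $(w+1)/2$, the ninth-group sub-GDD group has size $6q=(6r+1)w-c=\tfrac12 w^2+O(w)$ in the worst case, so $n\ge q$ forces $k\ge w+O(1)$, whence $7k+k'\ge 7w+O(1)$ and $u\ge 3(w+1)\cdot 7w+3w^2+O(w)=24w^2+O(w)$, which is the stated bound.

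The step I expect to be the real obstacle is the $\epsilon=3$ analogue of the residue sweep. There the relevant modulus is $3(w+z)$, and a ninth-group family obtained from Lemma~\ref{flat} alone only sweeps the subgroup generated by $6z$, which is proper; one must instead build the ninth-group family (in the manner of Example~\ref{ex:75}) from an RTD$(3,\ell)$ with $\gcd\!\big(\ell,\tfrac12(w+z)\big)=1$, so that its contribution has step $6\ell$ and sweeps all of $6\mathbb{Z}/3(w+z)\mathbb{Z}$. Verifying, for every $w\equiv 3\pmod 6$, that some admissible pair $(z,\ell)$ with $z\in\{3,9,15\}$ achieves this coprimality while keeping $\ell$ (hence the ninth-group design, hence the threshold) small---and exhibiting, in place of Example~\ref{ex:75}, a suitable small KTS with a disjoint sub-KTS$(w)$ to fill the expanded frame---is the delicate point; Proposition~\ref{WFC-triple} can be invoked in tandem to reduce many of these cases to smaller gaps by induction on $w$. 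Everything else is routine bookkeeping, with the slack in the $O(w)$ term absorbing the finitely many mid-sized $w$ for which the clean estimates are not yet sharp enough.
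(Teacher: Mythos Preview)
Your treatment of the case $w\equiv 1\pmod 6$ is essentially the paper's proof: the same truncated-TD skeleton with weight $(6;12)$, the same two ingredient families (Lemma~\ref{sip1}(a) for the bulk of the groups, Lemma~\ref{flat} for the last), and the same Frobenius-type count giving the $24w^2+O(w)$ threshold. The only cosmetic difference is that you truncate two groups where the paper truncates one; the paper keeps eight full groups, sets $n=t(w+1)/2$ and $6p+1=(6s+1)w$, and invokes $\gcd(w,4(w+1))=1$ directly to write $u-w=6\bigl(4(w+1)t+ws\bigr)$.

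For $w\equiv 3\pmod 6$, however, there is a real gap. Your plan for the ninth group is to mimic Proposition~\ref{gap9}: expand a $(3;6)^{2r+1}$ frame by an RTD$(3,\ell)$ and fill with a small gap-$w$ KTS carrying a disjoint sub-KTS, as Example~\ref{ex:75} does when $w=9$. But that ``small'' ingredient is precisely what is unavailable for general $w$; you say as much, and the suggested fallback to Proposition~\ref{WFC-triple} only handles $u$ in a thin residue class. The paper avoids this entirely with a trick you are missing: it fills the truncated group using Lemma~\ref{sip1}(b) \emph{again}, but with a second parameter $z'\in\{3,9,15\}$ in place of $z$. No direct construction beyond Lemma~\ref{sip1}(b) is needed. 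Concretely, with $6n=3t(w+z)$ and $6p+z=3s(w+z')+z'$ one gets $u=3\bigl(8t(w+z)+s(w+z')\bigr)+z'$, and the point is that one may choose $z,z'\in\{3,9,15\}$ so that $\gcd\!\bigl(8(w+z),\,w+z'\bigr)=6$, which makes the two-generator Frobenius argument go through exactly as in the $\epsilon=1$ case. This replacement of your hypothetical Example~\ref{ex:75}-analogue by a second invocation of Lemma~\ref{sip1}(b) is the missing idea.
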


\begin{proof}
The result follows for $w \in \{1,3,7,9\}$ from Theorems~\ref{thm:mkts} and \ref{thm:2u+3}, and Propositions~\ref{gap7} and \ref{gap9}.  So we assume $w \ge 13$.

The rest of the proof divides into cases.  Suppose first that $w \equiv 1 \pmod{6}$.  
By Lemma~\ref{flat} and the remark following it, there exists a KTS of order $(12s+3)w$ containing a sub-STS$((6s+1)w)$ for any positive integer $s$.
By Lemma~\ref{sip1}(a), there exists a KTS of order $(6t+1)(w+1)+1=6(w+1)t+w+2$ containing a subdesign of order $3(w+1)t+1$ for any positive integer $t$.  
Since $w$ is odd, it is relatively prime to $4(w+1)$.  From the lower bound on $u$, we can write $u=6k+w$, where $k=4(w+1)t+ws$ for positive integers $s,t$.  (Some mild restrictions on $s,t$ are detailed below, but these incur at most a linear increase to the Frobenius bound.)

As in Proposition~\ref{gap7}, we start with a TD$(9,n)$. Truncate all but $p$ points in the last group and give weights $(6;12)$ to all remaining points.  Replace 
the blocks of sizes in $\{8,9\}$ with Kirkman frames having subdesigns of type $(6;12)^x$ for $x \in \{8,9\}$. The result is a Kirkman frame  with a sub-GDD of type $(6n;12n)^8(6p;12p)^1$.
Now let  $n = t(w+1)/2$ and $6p+1=(6s+1)w$.  Add a set of $w+2$ new elements, one of which is to be allocated to the subdesign.   Fill groups with KTS having subdesigns so that, additionally, on the first 8 groups, a common KTS subdesign of order $w+2$ is aligned on the new points.  The resulting KTS has order $8 \times 6(w+1)t + (12s+3)w=2u+w$ and a subdesign of order $8 \times 3(w+1)t+(6s+1)w=u$, as required.

In the case $w \equiv 3\pmod{6}$, $w \ge 15$, we proceed similarly, using Lemma~\ref{sip1}(b) with two different choices for $z$.  We add a set of $w+z$ new points with 
$z$ of them allocated to the subdesign. Put $6n=3t(w+z)$ and $6p+z = 3s(w+z')+z'$ for positive integers $s,t$ and $z,z'$ as in the lemma.  
Since $z$ and $z'$ can be chosen so that $\gcd(8(w+z),w+z')=6$, we have a representation $u=3((8t(w+z)+s(w+z'))+z'$ as needed for all $u \ge 24w^2+O(w)$.
\end{proof}

\subsection{Smaller subdesigns}

For relatively small subdesign orders, the existence of KTS having subdesigns follows from earlier work.

\begin{prop}
\label{small-subdesigns}
There exists a KTS$(v)$ containing an STS$(u)$ as a subdesign if
\begin{itemize}[topsep=0pt]
\item
$v \ge 3u$, for $u \equiv 3 \pmod{6}$, and
\item
$v \ge 6u+3$, for $u \equiv 1 \pmod{6}$.
\end{itemize}
\end{prop}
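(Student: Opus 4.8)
The plan is to deduce both cases from the characterization of Kirkman subsystems recalled in Section~\ref{intro} (\cite{RS,Stinson-KTS}), together with Theorem~\ref{thm:mkts}. Throughout, $v \equiv 3 \pmod 6$ is implicit, since otherwise no KTS$(v)$ exists. For the first bullet, let $u \equiv 3 \pmod 6$ and $v \ge 3u$. The cited result then gives a KTS$(v)$ containing a sub-KTS$(u)$; disregarding the resolvability of this subsystem, it is in particular a sub-STS$(u)$, which is all that is asked. So this case is immediate, the only content being that $v \ge 3u$ is exactly the admissibility condition for a Kirkman subsystem of order $u$.

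For the second bullet, let $u \equiv 1 \pmod 6$ and $v \ge 6u+3$. Since $u \equiv 1 \pmod 6$ we have $2u+1 \equiv 3 \pmod 6$ and $v \ge 6u+3 = 3(2u+1)$. By Theorem~\ref{thm:mkts} there is an MK$(2u+1)$, that is, a KTS$(2u+1)$ containing an STS$(u)$ as a subdesign. I would then embed this particular KTS$(2u+1)$ as a Kirkman subsystem of a KTS$(v)$: the constructions underlying the bound $v \ge 3(2u+1)$ in \cite{RS} (see also \cite{Stinson-KTS}) produce an incomplete Kirkman triple system of order $v$ with a single hole of size $2u+1$, and that hole may be filled with any prescribed KTS$(2u+1)$. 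Filling it with the chosen MK$(2u+1)$ yields a KTS$(v)$ containing that MK$(2u+1)$, hence containing an STS$(u)$, as a subdesign. In the extremal case $v = 6u+3$ one can argue concretely instead: an RTD$(3,2u+1)$ exists (since $2u+1 \notin \{2,6\}$) and is a resolvable $3$-GDD of type $(2u+1)^3$ whose blocks meet each group in at most one point, so filling its three groups with copies of an MK$(2u+1)$ and combining parallel classes as usual produces a KTS$(6u+3)$ with three (disjoint) sub-STS$(u)$'s.

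The only point that needs attention — the ``main obstacle'' in an otherwise routine argument — is that the Kirkman-subsystem result is stated purely as an existence statement, whereas here I need the sub-KTS$(2u+1)$ inside the larger system to be a \emph{specific} design, one containing an STS$(u)$; an off-the-shelf KTS$(2u+1)$ need not. This is harmless because the standard constructions achieving $v \ge 3(2u+1)$ factor through incomplete Kirkman triple systems whose hole is completed by an arbitrary ingredient, but it is the one step I would spell out rather than cite as a black box. Everything else is bookkeeping: $2u+1 \equiv 3 \pmod 6$ when $u \equiv 1 \pmod 6$, the identity $6u+3 = 3(2u+1)$, and the observation that a Kirkman subsystem is a fortiori a Steiner subsystem.
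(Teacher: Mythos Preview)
Your proposal is correct and follows essentially the same route as the paper: the first case is immediate from the Kirkman subsystem result of \cite{RS,Stinson-KTS}, and for the second you set $w=2u+1$, invoke Theorem~\ref{thm:mkts} to obtain an MK$(w)$, and then use $v\ge 3w$ to place it inside a KTS$(v)$. The only cosmetic difference is how the ``specific subsystem'' issue is handled: the paper simply takes a KTS$(v)$ with a Kirkman sub-KTS$(w)$ and \emph{replaces} that subsystem by the chosen MK$(w)$ (resolvability is preserved because the parallel classes of any KTS$(w)$ slot into the same positions), whereas you phrase the same step via incomplete Kirkman triple systems with a hole to be filled---these are two sides of the same coin.
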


\begin{proof}
The first case follows as a direct result of existence \cite{RS,Stinson-KTS} of KTS$(v)$ containing Kirkman subsystems.  For the second case, let $w=2u+1$.  By Theorem~\ref{thm:mkts}, there exists a KTS$(w)$, call it $\kappa$, containing a subdesign STS$(u)$.  Since $v \ge 3w$, there exists a KTS$(v)$ containing a Kirkman subsystem KTS$(w)$.  Replace this subsystem with a copy of $\kappa$.  The resulting KTS$(v)$ is still resolvable and it contains the STS$(u)$ in (the copy of) $\kappa$ as a subdesign.
\end{proof}

Frames with sub-frames can lead to some additional existence results. This type of 
construction depends on the existence of `small' KTS with STS subdesigns. 
In this case, we use KTS$(21)$ which have STS subdesgins of orders 9 and 7, respectively. 

\begin{thm}
\label{general-gaps-tripling}
(a) For all $u \equiv 3 \pmod{6}$, $u \ge 27$, there exists a KTS$(3u-6)$ containing a sub-STS$(u)$.
(b) For all $u \equiv 1 \pmod{6}$, $u \ge 25$, there exists a KTS$(3u)$ containing a sub-STS$(u)$.
\end{thm}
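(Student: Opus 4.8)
The plan is to mimic the structure of Theorem~\ref{mainframe} and Lemma~\ref{triple}: start from a Kirkman frame with a $3$-GDD subdesign, inflate it by an RTD to create room, then fill in groups with a small KTS carrying the required sub-STS so that the subdesigns all align. The specific small designs to use are the two variants of KTS$(21)$: one with a sub-STS$(9)$ (Lemma~\ref{smallcases}) and one with a sub-STS$(7)$ (the subdesign on $\{1,\dots,7\}$ visible in Kirkman's KTS$(15)$ is too small, but an H$(9,12)$-style or finite-field construction gives a KTS$(21)$ with a sub-STS$(7)$; alternatively Lemma~\ref{lem:prime} with $q=7$ and then tripling via Proposition~\ref{WFC-triple} is not quite it, so one should cite the KTS$(21)\supset$STS$(7)$ from the compilation \cite{CCIL}). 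For part (a), the idea is: take a Kirkman frame of type $6^n$ with a $3$-GDD subdesign of type $3^n$ (Theorem~\ref{pbdclosure3,6}, valid for odd $n\ge 5$ outside $\mathcal E_2$), give weight $3$ and replace blocks with RTD$(3,3)$ to get a Kirkman frame of type $(9;18)^n$, then add $3$ new points and fill the $n$ groups of size $18$ with copies of a KTS$(21)\supset$STS$(9)$, aligning the STS$(9)$ on the groups of the sub-$3$-GDD of type $9^n$. This produces a KTS$(18n+3)$ with a sub-STS$(9n)$ — but that is exactly Lemma~\ref{triple}, giving order $18n+3 = 2(9n)+3$, not $3u-6$.

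So the correct mechanism must be different: we want the ambient order to be roughly triple the subdesign order, which signals that the \emph{groups} of the frame (not just a sub-GDD) should carry the subdesign, i.e.\ we should use the second kind of frame-with-GDD-subdesign introduced before Lemma~\ref{sip1}, where the groups of the $3$-GDD subdesign coincide with (some of) the groups of the frame. Concretely, for part (a) with $u\equiv 3\pmod 6$: take a Kirkman frame of type $6^n$ whose blocks we inflate, but instead view a KTS$(3u-6)$ on a point set of size $3u-6 = 3(u-2)$, and build it by filling a resolvable structure of type $3^{u-2}$ with an embedded $3^{(u-2)/3}$? The cleanest route I would actually take: let $u=2m+3$ with $m\equiv 0\pmod 3$, so $3u-6 = 6m+3$; then $3u-6$ is an admissible KTS order and $u = \tfrac13(6m+3)+2$. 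Start from a KTS$(6m+3)$ containing a sub-KTS$(9)$ and a sub-STS$(3m)$ meeting in a block — the $\Diamond$-type design produced by Theorem~\ref{mainframe} — wait, that gives sub-STS of order $3m$, not $u=2m+3$. Since $3m$ vs $2m+3$: these agree only when $m=3$. Hence the subdesign orders do not match, and the true construction must genuinely build a \emph{new} KTS whose subdesign is larger than what tripling alone gives. I would therefore set it up as: a Kirkman frame of type $(2;4)^{something}$ or use Proposition~\ref{gap7}/\ref{gap9}-style truncated-TD machinery, but restricted so the subdesign sits on essentially all but a bounded number of points.

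Concretely for (a): write $u = 3n$ with $n\equiv 1\pmod 2$, so the target is KTS$(9n-6)$ with sub-STS$(3n)$. Begin with a Kirkman frame of type $(3;6)^n$ (Theorem~\ref{pbdclosure3,6}), which by Theorem~\ref{mainframe} extends to a KTS$(6n+3)$ with sub-STS$(3n)$; now I want to \emph{remove} $9$ points of the ambient design that lie outside the STS$(3n)$, turning it into a frame with a hole of size $9$, then re-fill that hole with a KTS$(21)$ minus a sub-KTS$(9)$, i.e.\ a Kirkman frame of type $(3;6)^4$ aligned suitably — but $(3;6)^4$ has $n=4$ even, which does not exist. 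The surviving correct version: use the $\Diamond$-type output of the remark after Lemma~\ref{triple} plus Example~\ref{ex:21}. I would fill the groups (of size $9$, say, in an inflated frame of type $9^t$) with a KTS$(21)$ having a sub-STS$(9)$ \emph{and} a disjoint sub-KTS$(9)$ — but KTS$(21)$ only has room for one sub-STS$(9)$. Given these combinatorial tensions, \textbf{the main obstacle} is pinning down which frame type and which small ingredient KTS$(21)$ (with sub-STS$(9)$ for (a), sub-STS$(7)$ for (b)) make the point count $3u\mp 6$ come out exactly, and verifying the resolvability bookkeeping (the $\tfrac{w-1}{2}$-extra-classes argument as in the Howell-design construction of Section~2) so that the leftover partial classes on the groups and on the new points recombine into full parallel classes. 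Once the ingredient is fixed — I expect it to be: a Kirkman frame of type $(3;6)^n$ inflated by RTD$(3,3)$ to type $(9;18)^n$, with groups filled by a KTS$(21)\supset$ STS$(9)$ but using the STS$(9)$ aligned on the \emph{frame} groups rather than on a sub-GDD, adding only $21-18=3$ new points, yielding $18n+3$; to hit $9n-6=3u-6$ one instead inflates by RTD$(3,3)$ from a frame of type $(3;6)^n$ built on $n$ groups and fills with KTS$(9)$ not KTS$(21)$ — the arithmetic then forces $u\ge 27$ from the need $n\ge 9$ in Theorem~\ref{pbdclosure3,6} together with the excluded set $\mathcal E_2$, with the finitely many bad $n$ in $\mathcal E_2$ handled by the tripling Proposition~\ref{WFC-triple} applied to Theorem~\ref{thm:2u+3} — the remaining steps (checking admissibility $9n-6\equiv 3\pmod 6$, aligning subsystems, counting parallel classes) are routine. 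Part (b) is identical with $u\equiv 1\pmod 6$, target KTS$(3u)$, using a KTS$(21)$ with sub-STS$(7)$ as the fill-in design and Lemma~\ref{lem:prime}/Theorem~\ref{thm:mkts} to cover small or exceptional orders, the small-case threshold $u\ge 25$ coming from the first prime power $q\equiv 1\pmod 6$ with $q\ge $ the relevant frame bound.
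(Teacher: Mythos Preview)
Your proposal never lands on a working construction; you spend most of the proof circling the right neighborhood, acknowledge that ``the main obstacle is pinning down which frame type \dots'', and end with a heuristic that still has the wrong arithmetic. The essential idea you miss is both simpler than the things you try and different in an important way.

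The paper does \emph{not} start from a frame of type $(3;6)^n$ with a maximum sub-GDD. Instead, it takes an ordinary Kirkman frame of type $6^n$ (which exists for every integer $n\ge 4$, no exceptions) and triples it via an RTD$(3,3)$. The resulting frame of type $18^n$ automatically contains the original $6^n$ frame as a sub-GDD; in the paper's notation this is a frame of type $(6;18)^n$. Note the ratio is $1{:}3$, not the $1{:}2$ that you keep returning to --- this is exactly why the final design lands near $v=3u$ rather than near $v=2u$. Now add three new points $\{\infty_1,\infty_2,\infty_3\}$ and fill each group of size $18$ with a KTS$(21)$ in which $\{\infty_1,\infty_2,\infty_3\}$ is a block. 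For part (a), use a KTS$(21)$ carrying a sub-STS$(9)$ on the six sub-GDD points of that group together with all three $\infty$'s; the sub-GDD of type $6^n$ plus these $n$ sub-STS$(9)$'s (sharing the block $\{\infty_1,\infty_2,\infty_3\}$) assemble into a sub-STS$(6n+3)$ inside the KTS$(18n+3)$, i.e.\ $u=6n+3$, $v=3u-6$. For part (b), use a KTS$(21)$ with a sub-STS$(7)$ on the six sub-GDD points plus a single $\infty_1$, where the block $\{\infty_1,\infty_2,\infty_3\}$ meets this STS$(7)$ in exactly that one point; now the sub-GDD plus the $n$ sub-STS$(7)$'s yield a sub-STS$(6n+1)$, i.e.\ $u=6n+1$, $v=3u$.

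Two consequences of getting the frame type right: first, because Kirkman frames of type $6^n$ exist for all $n\ge 4$ with no exceptional list, the bounds $u\ge 27$ and $u\ge 25$ fall out immediately and there is nothing left over to patch with Proposition~\ref{WFC-triple} or Theorem~\ref{thm:2u+3}. Second, your attempt to align an STS$(9)$ ``on the frame groups'' fails precisely because the frame groups here have size $18$; the STS$(9)$ lives on the size-$6$ sub-GDD group plus the three added points, which is the alignment you never tried.
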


\begin{proof}
First note that a Kirkman frame with sub-GDD of type $(6;18)^n$ exists for all integers $n \ge 4$.  To see this, we simply apply the tripling construction to a Kirkman 
frame of type $6^n$ and view it as a sub-GDD.  
Add a set of $3$ new elements $\{\infty_1,\infty_2,\infty_3\}$.  

(a)
Fill groups with KTS$(21)$ having a subdesign of order 9, where a common block is used on 
$\{\infty_1,\infty_2,\infty_3\}$. This produces a KTS$(18n+3)$ having a sub-STS$(6n+3)$.

(b)
Fill groups with KTS$(21)$ having a subdesign of order 7 and where a common block $\{\infty_1,\infty_2,\infty_3\}$ intersects each subdesign in one element, say, $\infty_1$.
The result is a KTS$(18n+3)$ having a sub-STS$(6n+1)$.
\end{proof}

Our results leave a gap when the order of the KTS is in the intermediate range between double and triple the order of the subdesign.  Kirkman frames with subdesigns achieving a given ratio of orders may be useful in future work.  Even so, however, new ideas are likely needed to fully settle Stinson's problem of constructing KTS with STS subdesigns, or even to reduce it to a finite list of exceptions.

\section{Conclusion}

We have obtained many existence results for Kirkman triple systems containing subdesigns.  Small subdesigns can be embedded using Proposition~\ref{small-subdesigns}.  
We have shown that maximal subdesigns have connections to other topics, including strong starters and Howell designs.  Theorems~\ref{thm:mkts} and \ref{thm:2u+3} nearly completely settle the existence question in each of two different congruence classes.  More generally, we showed that KTS whose subdesign orders are a fixed offset from maximality exist with only finitely many possible exceptions; explicit bounds for two small offsets appear in Propositions~\ref{gap7} and \ref{gap9}.

We have also introduced the idea of frames with sub-GDDs. 
Many of our results are carried by direct computations of Kirkman frames with sub-GDDs
combined with recursive constructions for such designs. 
So far, these frames with sub-GDDs have landed in two special  categories in which the 
groups of the subdesign are either a union of  groups of the frame or comprise a constant 
proportion of each group. Our existence results for such frames have also been 
restricted to uniform frames. 
Breaking free of these constraints and considering more general types of 
sub-GDDs is likely one of the  next steps in our research.  

There remain a number of interesting and difficult computational problems in finding KTS with STS  subdesigns.  We can construct KTS$(v)$ with all possible admissible STS$(u)$ for the two smallest cases, $v=15$ and $v=21$. 
But, although it is easy to construct KTS$(27)$ with sub-STS$(u)$ for $u=3,9,13$,  it 
appears difficult to find a KTS$(27)$ with an STS$(7)$ subdesign.  Similarly, the 
existence of KTS$(v)$ with an STS$(7)$ subdesign is  unknown for $v=33$ and $39$. 
New direct constructions are needed for these cases. 
$\Diamond$-designs are also difficult to construct since they require both KTS and STS 
subdesigns which intersect in a smaller STS. As we indicated in Section~\ref{sec:2u+3},
 the existence of $\Diamond$-designs could be  used together with 
Kirkman frames with sub-GDDs to help compelte the spectrum.  

Another useful tool in constructing subdesigns is the ability to remove 
a subdesign and replace it with another. We used this idea in 
Proposition~\ref{small-subdesigns}. 
However,  in general, replacing a Steiner triple system in a KTS will require finding another 
resolution for the resulting STS.  Our direct constructions  use a cyclic group; this 
means that the STS subdesign is generated cyclically.  For example, there is a KTS$(39)$ 
which contains a cyclic STS$(19)$ as a subdesign.  Although, there exists an STS$(19)$ with 
an STS$(7)$ subdesign, we cannot swap the designs and expect to retain the resolution.  It 
would be nice to be able to alter the direct construction so that the resulting design admits 
further subdesigns.

Although many challenging open cases remain, we are hopeful that creative new constructions might arise for resolvable designs with subdesigns.  We offer one such example of an ad-hoc construction. 

\begin{ex}
\label{ad-hoc}
We construct a Kirkman frame of type $8^{13}$ which contains as a subdesign 
a $3$-GDD of type $8^3$.
We begin by listing the blocks of a PBD$(13,\{3,4\})$ defined on the set 
$V= \{ 1_i,2_i,3_i,4_i : i=1,2,3 \} \cup \{\infty\}$. 
The blocks are:

\begin{tabular}{lll}
$A_1 = \{1_1,1_2,1_3\}$ & 
$A_2 =\{2_1,2_2,2_3\}$ & 
$A_3 = \{3_1,3_2,3_3\}$ \\
$A_4 =\{1_1,3_3,4_2\}$ &
$A_5 = \{1_2,2_3,4_1\}$ & 
$A_6 = \{2_1,3_2,4_3\}$ \\
$A_7 = \{1_1,2_2,4_3\}$ & 
$A_8 = \{1_3,3_2,4_1\}$ & 
$A_9 = \{2_3,3_1,4_2\}$ \\
$A_{10} = \{1_2,3_1,4_3\}$ &
$A_{11} = \{1_3,2_1,4_2\}$ &
$A_{12} = \{2_2,3_3,4_1\}$\\
\end{tabular}

and 

\begin{tabular}{llll}
$B_1 = \{1_1,2_3,3_2,\infty\}$ & 
$B_2 =\{1_2,2_1,3_3, \infty\}$ & 
$B_3 = \{1_3,2_2,3_1,\infty\}$ & 
$B_4 = \{4_1,4_2,4_3,\infty\}$\\
$B_5 = \{1_1,2_1,3_1,4_1\}$ &
$B_6 = \{1_2,2_2,3_2,4_2\}$ &
$B_7= \{1_3,2_3,3_3,4_3\}$.
\end{tabular}

Replace each block of size 4 with a Kirkman frame of type $8^4$.  Each block 
$a = \{x_1,x_2,x_3,x_4\}$ is replaced by a frame with groups $x_i \times \{1,2,\dots,8\}$ 
for $i=1,2,3,4$.  Let $R_j(a,x_i)$ denote the 4 partial resolution classes associated with 
the hole $x_i \times \{1,2,\dots,8\}$,  $j=1,2,3,4$. 

\begin{figure}[htbp]
\begin{center}
\begin{tabular}{cl}
Frame hole & Resolution classes, $j=1,2,3,4$ \\
\hline
$1_1 \times \{1,2,\dots,8\}$ &  $R_j(B_5, 1_1) \cup R_j(B_7,2_3) \cup R_j(B_6,3_2)  \cup R_j(B_1,1_1)$\\
$1_2\times \{1,2,\dots,8\}$  & $R_j(B_6, 1_2) \cup R_j(B_5,2_1) \cup R_j(B_7,3_3) \cup R_j(B_2,1_2)$ \\
$1_3\times \{1,2,\dots,8\}$  & $R_j(B_7,1_3) \cup R_j(B_6,2_2) \cup R_j(B_5,3_1) \cup R_j(B_3,1_3)$ \\
$2_1\times \{1,2,\dots,8\}$ & $M_j(A_7) \cup M_j(A_8) \cup M_j(A_9) \cup R_j(B_2,2_1)$ \\
$2_2\times \{1,2,\dots,8\}$ &   $M_j (A_4) \cup M_j(A_5) \cup M_j(A_6) \cup R_j(B_3,2_2)$ \\
$2_3\times \{1,2,\dots,8\}$ & $M_j(A_{10}) \cup M_j(A_{11}) \cup M_j(A_{12}) \cup R_j(B_1,2_3)$ \\
$3_1\times \{1,2,\dots,8\}$ & $M_{4+j}(A_4) \cup M_{j+4}(A_5) \cup M_{j+4}(A_6) \cup R_j(B_3,3_1)$ \\
$3_2\times \{1,2,\dots,8\}$ & $M_{4+j}(A_{10}) \cup M_{j+4}(A_{11}) \cup M_{j+4}(A_{12}) \cup R_j(B_1,3_2)$ \\
$3_3\times \{1,2,\dots,8\}$ & $M_{4+j}(A_7) \cup M_{j+4}(A_8) \cup M_{j+4}(A_9) \cup R_j(B_2,3_3)$ \\
$4_1\times \{1,2,\dots,8\}$  & $R_j(B_5,4_1) \cup R_j(B_6,4_1) \cup R_j(B_7,4_1) \cup R_j(B_4,4_1)$ \\
$4_2\times \{1,2,\dots,8\}$  &  $M_j(A_1) \cup M_j(A_2) \cup M_j(A_3) \cup R_j(B_4,4_2)$ \\
$4_3\times \{1,2,\dots,8\}$  & $M_{j+4}(A_1) \cup M_{j+4}(A_2) \cup M_{j+4}(A_3) \cup R_j(B_4,4_3)$ \\
$\infty\times \{1,2,\dots,8\}$ & $R_j(B_1,\infty) \cup R_j(B_2,\infty) \cup R_j(B_3,\infty) \cup R_j(B_4,\infty)$ 
\end{tabular}
\end{center}
\caption{Classes for a Kirkman frame of type $8^{13}$ having a sub-GDD of type $8^3$.}
\label{8-13}
\end{figure}

Replace each block of size 3, say $b= \{y_1,y_2,y_3\}$, with a set of 3 MOLS  with 
groups $y_i \times \{1,2,\dots,8\}$.  This set of MOLS provides 8 resolution 
classes for the elements $\{y_1,y_2,y_3\} \times \{1,2,\dots,8\}$.  Label these 
8 classes $M_j(b)$ for $j=1,2,\dots,8$.

A frame resolution of the resulting blocks defined on $V \times \{1,2,\dots,8\}$ is shown in Figure~\ref{8-13}.  Notice that each block of size 3 in the underlying PBD induces 
a 3-GDD of type $8^3$ as a subdesign in this Kirkman frame of type $8^{13}$. 
From this  frame,
we obtain a KTS of order $8\cdot 13 +1=105$ which contains as a subdesign an STS$(25)$.  

\end{ex}

The construction of Example~\ref{ad-hoc} can be generalized. If there exists a Kirkman frame of type $m^4$ and $3$ MOLS of order $m$, then there is a Kirkman frame of type $m^{13}$ which 
contains as a subdesign a $3$-GDD of type $m^3$. To get a KTS, we need 
$m\equiv 2 \pmod{6}$.  A further generalization to the underlying PBD would be interesting as a topic for future work.

\section*{Acknowledgments}

Research of Peter Dukes is supported by NSERC grant 312595--2017.

\section*{Appendix: Small direct constructions}

Each of the following frames with sub-GDDs is presented either as a strong starter together with a set of triples, or as a set of base blocks with subdesign using primed and unprimed elements.  In either case, we develop blocks under the corresponding cyclic group.

\begin{itemize}

\item $(g;h)^u = (1,2)^{115}$

\noindent
\small
strong starter:
$\{102, 103\}$, $\{70, 72\}$, $\{5, 8\}$, $\{35, 39\}$, $\{84, 89\}$, $\{40, 46\}$, $\{87, 94\}$, 
$\{83, 91\}$, $\{34, 43\}$, $\{31, 41\}$, $\{48, 59\}$, $\{7, 110\}$, $\{99, 112\}$, $\{10, 24\}$, 
$\{37, 52\}$, $\{76, 92\}$, $\{32, 49\}$, $\{12, 109\}$, $\{9, 28\}$, $\{78, 98\}$, $\{26, 47\}$, $\{79, 101\}$, $\{22, 45\}$, $\{29, 53\}$, $\{57, 82\}$, $\{55, 81\}$, $\{68, 95\}$, $\{65, 93\}$, $\{38, 67\}$, $\{3, 88\}$, $\{30, 114\}$,  $\{58, 90\}$, $\{11, 44\}$, $\{2, 36\}$, $\{16, 96\}$, $\{14, 50\}$, $\{69, 106\}$, $\{27, 104\}$, $\{4, 80\}$, $\{60, 100\}$, $\{23, 64\}$, $\{71, 113\}$, $\{13, 85\}$, $\{61, 105\}$, $\{6, 51\}$, $\{17, 86\}$, $\{19, 66\}$, $\{15, 63\}$, $\{42, 108\}$, $\{25, 75\}$, $\{33, 97\}$,  $\{21, 73\}$, $\{54, 107\}$, $\{1, 62\}$, $\{56, 111\}$, $\{18, 74\}$,
$\{20, 77\}$

\noindent
triples:
$\{28,62,12\}$, $\{14,70,106\}$, $\{88,50,1\}$, $\{71,76,8\}$, $\{109,79,39\}$, $\{5,4,110\}$, $\{108,11,7\}$, $\{93,26,99\}$, $\{42,111,113\}$, $\{104,49,22\}$, $\{56,31,19\}$, $\{114,101,10\}$, $\{80,41,95\}$, $\{68,17,36\}$, $\{9,30,83\}$, $\{75,32,40\}$, $\{20,23,3\}$, $\{54,25,47\}$, $\{102,44,18\}$

\item $(g;h)^u = (2;4)^7$

\noindent
\small
$\{6'_0, 5'_1, 2_0\},
\{3'_0, 4'_0, 1_0\},
\{6'_1, 2'_1, 4_1\},
\{2'_0, 5'_0, 1_1\},
\{1'_1, 3'_1, 6_0\},
\{1'_0, 4'_1, 3_0\},
\{2_1, 4_0, 5_0\},
\{6_1, 5_1, 3_1\},$\\
$\{6'_1, 5'_0, 2_1\},
\{3'_0, 1'_1, 5_1\},
\{2'_1, 3'_1, 4_0\},
\{1'_0, 5'_1, 6_1\},
\{6'_0, 4'_0, 5_0\},
\{4'_1, 2'_0, 3_1\},
\{6_0, 2_0, 1_1\},
\{4_1, 1_0, 3_0\}$
\normalsize

\item $(g;h)^u = (3;6)^5$

\noindent
\small
$\{4_0',3_0',1_1\},
\{2_0',1_1',4_2\},
\{1_0',3_2',2_2\},
\{2_2',4_1',3_0\},
\{3_1',2_1',4_1\},
\{1_2',4_2',3_1\},
\{1_2,2_0,3_2\},
\{2_1,1_0,4_0\},$\\
$\{4_1',1_0',2_0\},
\{2_0',4_0',3_1\},
\{4_2',3_1',2_1\},
\{3_2',1_1',4_1\},
\{1_2',2_1',4_2\},
\{3_0',2_2',1_0\},
\{2_2,1_1,3_2\},
\{1_2,4_0,3_0\},$\\
$\{1_0',2_2',4_2\},
\{3_1',2_0',4_0\},
\{4_2',3_2',1_0\},
\{1_2',3_0',2_2\},
\{4_0',1_1',3_0\},
\{4_1',2_1',3_2\},
\{1_1,2_0,4_1\},
\{1_2,2_1,3_1\}$
\normalsize

\item $(g;h)^u = (3;6)^9$

\noindent
\small
strong starter:
$\{23,24\}, \{19,21\}, \{2,5\}, \{4,8\}, \{7,12\}, \{11,17\}, \{13,20\}, \{25,6\},  \{16,26\}, \{3,14\},$\\ 
$\{10,22\}, \{15,1\}$

\noindent
triples:
$\{8,14,3\}, \{11,25,21\}, \{24,23,26\}, \{22,10,2\}$

\normalsize

\item $(g;h)^u = (3;6)^{11}$

\noindent
\small
strong starter:
$\{12,13\}, \{30,32\}, \{9,6\}, \{23,27\}, \{15,20\}, \{4,10\}, \{28,2\}, \{26,1\}, \{16,25\}, \{21,31\},$\\
$\{7,19\}, \{5,18\}, \{3,17\}, \{14,29\}, \{8,24\}$

\noindent
triples:
$\{21,18,5\}, \{16,6,1\}, \{24,12,31\}, \{13,7,9\}, \{28,3,4\}$

\normalsize

\item $(g;h)^u = (3;6)^{15}$

\noindent
\small
strong starter:
$\{18,19\},\{32,34\},\{2,5\},\{17,21\},\{26,31\},\{43,4\},\{33,40\},\{20,28\},\{16,25\},\{42,7\},$\\
$\{1,12\},\{44,11\},\{41,9\},\{24,38\},\{13,29\},\{22,39\},\{35,8\},\{36,10\},\{3,23\},\{6,27\},$\\
$\{37,14\}$

\noindent
triples:
$\{11,29,34\},\{39,27,19\},\{8,22,24\},\{36,32,35\},\{18,25,31\},\{40,23,14\},\{9,20,44\}$

\normalsize

\item $(g;h)^u = (3;6)^{17}$

\noindent
\small
strong starter:
$\{36,37\},\{5,7\},\{3,6\},\{11,15\},\{42,47\},\{44,50\},\{31,38\},\{19,27\},\{9,18\},\{20,30\},$\\
$\{10,21\},\{23,35\},\{39,1\},\{49,12\},\{25,40\},\{16,32\},\{41,8\},\{14,33\},\{28,48\},\{43,13\},\{24,46\},$\\
$\{22,45\},\{2,26\},\{4,29\}$

\noindent
triples:
$\{6,39,42\},\{23,35,37\},\{1,2,8\},\{45,29,4\},\{20,15,44\},\{13,32,36\},\{30,21,41\},\{24,11,3\}$

\normalsize

\item $(g;h)^u = (3;6)^{23}$

\noindent
\small
strong starter:
$\{27,28\},\{39,41\},\{52,55\},\{49,53\},\{56,61\},\{1,7\},\{58,65\},\{12,20\},\{33,42\},$\\
$\{54,64\},\{13,24\},\{22,34\},\{25,38\},\{43,57\},\{32,47\},\{3,19\},\{4,21\},\{11,29\},\{67,17\},$\\
$\{16,36\},\{9,30\},\{44,66\},\{63,18\},\{59,15\},\{48,5\},\{35,62\},\{51,10\},\{8,37\},\{45,6\},$\\
$\{40,2\},\{68,31\},\{50,14\},\{26,60\}$

\noindent
triples:
$\{27,36,9\},\{67,65,3\},\{21,1,57\},\{66,16,50\},\{2,19,13\},\{34,20,24\},\{60,29,68\},$\\
$\{4,26,58\},\{14,59,62\},\{47,7,35\},\{43,44,18\}$

\normalsize

\item $(g;h)^u = (3;6)^{27}$

\noindent
\small
strong starter:
$\{47,48\},\{40,42\},\{22,25\},\{57,61\},\{34,39\},\{26,32\},\{1,8\},\{58,66\},\{11,20\},$\\
$\{33,43\},\{65,76\},\{55,67\},\{74,6\},\{37,51\},\{79,13\},\{75,10\},\{18,35\},\{23,41\},\{30,49\},$\\
$\{77,16\},\{69,9\},\{63,4\},\{45,68\},\{72,15\},\{59,3\},\{62,7\},\{36,64\},\{17,46\},\{80,29\},$\\
$\{78,28\},\{70,21\},\{50,2\},\{19,53\},\{38,73\},\{24,60\},\{56,12\},\{14,52\},\{5,44\},\{31,71\}$

\noindent
triples:
$\{8,75,34\},\{45,70,33\},\{71,42,39\},\{35,77,5\},\{17,22,55\},\{51,61,59\},\{36,23,40\},$\\
$\{74,50,16\},\{38,57,56\},\{20,13,48\},\{24,18,2\},\{29,65,44\},\{15,26,46\}$

\normalsize

\item $(g;h)^u = (3;6)^{29}$

\noindent
\small
strong starter:
$\{20,21\},\{25,27\},\{45,48\},\{3,7\},\{69,74\},\{12,18\},\{10,17\},\{5,13\},\{70,79\},$\\
$\{34,44\},\{80,4\},\{30,42\},\{6,19\},\{43,57\},\{24,39\},\{37,53\},\{68,85\},\{46,64\},\{56,75\},$\\
$\{51,71\},\{31,52\},\{14,36\},\{59,82\},\{23,47\},\{38,63\},\{55,81\},\{76,16\},\{60,1\},\{54,84\},$\\
$\{41,72\},\{83,28\},\{2,35\},\{61,8\},\{67,15\},\{62,11\},\{49,86\},\{40,78\},\{26,65\},\{33,73\},$\\
$\{9,50\},\{77,32\},\{66,22\}$

\noindent
triples:
$\{79,32,74\},\{45,36,46\},\{85,68,9\},\{55,75,53\},\{12,33,20\},\{76,57,42\},\{15,77,47\},$\\
$\{34,60,67\},\{7,71,21\},\{43,39,8\},\{28,64,16\},\{65,81,38\},\{40,2,86\},\{17,80,11\}$

\normalsize

\item $(g;h)^u = (3;6)^{33}$

\noindent
\small
strong starter:
$\{77,78\},\{74,76\},\{59,62\},\{91,95\},\{2,7\},\{29,35\},\{38,45\},\{97,6\},\{81,90\},\{30,40\},$\\
$\{60,71\},\{34,46\},\{24,37\},\{69,83\},\{94,10\},\{86,3\},\{48,65\},\{18,36\},\{63,82\},\{68,88\},\{5,26\},$\\
$\{42,64\},\{93,17\},\{32,56\},\{25,50\},\{28,54\},\{20,47\},\{75,4\},\{85,15\},\{11,41\},\{27,58\},\{89,22\},$\\
$\{53,87\},\{14,49\},\{79,16\},\{43,80\},\{84,23\},\{31,70\},\{98,39\},\{67,9\},\{13,55\},\{57,1\},\{8,52\},$\\
$\{51,96\},\{72,19\},\{73,21\},\{44,92\},\{12,61\}$

\noindent
triples:
$\{50,47,34\},\{84,65,59\},\{77,6,23\},\{13,42,3\},\{55,17,43\},\{28,19,30\},\{71,16,40\},\{49,44,81\},$\\
$\{39,92,69\},\{26,74,18\},\{86,90,27\},\{96,62,15\},\{36,29,78\},\{45,25,10\},\{97,98,20\},\{35,21,93\}$

\normalsize

\item $(g;h)^u = (3;6)^{39}$

\noindent
\small
strong starter:
$\{108,109\},\{35,37\},\{91,94\},\{81,85\},\{82,87\},\{69,75\},\{33,40\},\{72,80\},\{113,5\},$\\
$\{86,96\},\{10,21\},\{95,107\},\{25,38\},\{18,32\},\{47,62\},\{11,27\},\{60,77\},\{55,73\},\{44,63\},\{110,13\},$\\
$\{105,9\},\{2,24\},\{41,64\},\{92,116\},\{17,42\},\{53,79\},\{7,34\},\{48,76\},\{100,12\},\{20,50\},\{23,54\},$\\
$\{61,93\},\{65,98\},\{102,19\},\{88,6\},\{15,51\},\{8,45\},\{36,74\},\{26,66\},\{70,111\},\{89,14\},\{58,101\},$\\
$\{103,30\},\{67,112\},\{99,28\},\{59,106\},\{4,52\},\{22,71\},\{83,16\},\{115,49\},\{68,3\},\{31,84\},\{43,97\},$\\
$\{1,56\},\{90,29\},\{57,114\},\{46,104\}$

\noindent
triples:
$\{90,55,84\},\{102,28,17\},\{80,5,61\},\{83,32,29\},\{67,51,79\},\{82,9,96\},\{108,75,58\},$\\
$\{111,3,86\},\{88,81,12\},\{14,69,45\},\{18,19,113\},\{97,95,25\},\{34,13,8\},\{116,106,98\},\{87,60,22\},$\\
$\{24,104,44\},\{36,40,76\},\{89,74,21\},\{30,43,101\}$

\normalsize

\item $(g;h)^u = (3;6)^{51}$

\noindent
\small
strong starter:
$\{72,73\}, \{5,7\}, \{112,115\}, \{28,32\}, \{135,140\}, \{82,88\}, \{118,125\}, \{67,75\}, \{92,101\}, $\\ $\{49,59\}, \{69,80\}, \{33,45\}, \{47,60\}, \{8,22\}, \{83,98\}, \{15,31\}, \{106,123\}, \{114,132\}, \{107,126\},$ \\$ \{116,136\}, \{130,151\}, \{105,127\}, \{76,99\}, \{46,70\}, \{142,14\}, \{77,103\}, \{139,13\}, \{11,39\}, \{9,38\}, $\\ $\{122,152\}, \{128,6\}, \{57,89\}, \{4,37\}, \{19,53\}, \{56,91\}, \{120,3\}, \{150,34\}, \{133,18\}, \{23,62\}, \{137,24\},$ \\ $ \{104,145\}, \{12,54\}, \{43,86\}, \{50,94\}, \{29,74\}, \{78,124\}, \{141,35\}, \{36,84\}, \{95,144\}, \{96,146\}, $ \\ $\{27,79\}, \{40,93\}, \{17,71\}, \{10,65\}, \{117,20\}, \{90,147\}, \{52,110\}, \{41,100\}, \{61,121\}, \{87,148\}, \{1,63\}, $ \\ $\{134,44\}, \{55,119\}, \{66,131\}, \{2,68\}, \{111,25\}, \{81,149\}, \{16,85\}, \{109,26\}, \{42,113\}, \{129,48\}, $ \\ $\{138,58\}, \{143,64\}, \{108,30\}, \{21,97\}$
 
\noindent
triples:
$\{32,131,119\}, \{55,38,68\}, \{113,1,87\}, \{56,45,114\}, \{26,71,115\}, \{97,7,111\}, \{33,4,36\}, $ \\ $\{58,98,132\}, \{139,104,39\}, \{140,37,18\}, \{92,69,117\}, \{19,15,95\}, \{130,91,148\}, \{11,105,35\}, $ \\$ \{34,109,94\}, \{73,110,100\}, \{10,62,143\}, \{53,48,20\}, \{63,42,124\}, \{61,5,67\}, \{112,57,65\}, $ \\ $\{83,126,125\}, \{52,16,14\}, \{13,150,6\}, \{59,127,81\}$
 
\normalsize
 
\item $(g;h)^u = (3;6)^{59}$

\noindent
\small
strong starter:
$\{130,131\}, \{136,138\}, \{73,76\}, \{151,155\}, \{14,19\}, \{20,26\}, \{53,60\}, \{121,129\}, \{79,88\},$\\ 
$\{149,159\}, \{4,15\}, \{116,128\}, \{160,173\}, \{164,1\}, \{43,58\}, \{7,23\}, \{18,35\}, \{147,165\}, \{142,161\},$\\ 
$\{55,75\}, \{81,102\}, \{77,99\}, \{17,40\}, \{108,132\}, \{95,120\}, \{5,31\}, \{127,154\},
\{24,52\}, \{45,74\},$\\
$\{54,84\}, \{119,150\}, \{57,89\}, \{11,44\}, \{83,117\}, \{36,71\}, \{50,86\}, \{125,162\}, \{29,67\}, \{166,28\},$\\
$\{82,122\}, \{152,16\}, \{30,72\}, \{144,10\}, \{135,2\}, \{56,101\}, \{63,109\}, \{98,145\}, \{32,80\}, \{51,100\},$\\
$\{107,157\}, \{123,174\}, \{115,167\}, \{25,78\}, \{87,141\}, \{93,148\}, \{170,49\}, \{12,69\}, \{6,64\}, \{96,156\},$\\
$\{153,37\}, \{113,175\}, \{27,90\}, \{42,106\}, \{61,126\}, \{158,47\}, \{176,66\}, \{104,172\}, \{65,134\}, \{41,111\},$\\
$\{114,8\}, \{13,85\}, \{143,39\}, \{137,34\}, \{94,169\}, \{92,168\}, \{62,139\}, \{68,146\}, \{33,112\}, \{91,171\},$\\
$\{105,9\}, \{21,103\}, \{97,3\}, \{163,70\}, \{140,48\}, \{38,124\}, \{46,133\}, \{22,110\}$
 
\noindent
triples:
$\{78,116,140\}, \{142,18,160\}, \{159,58,21\}, \{121,47,74\}, \{71,143,43\}, \{147,12,61\}, \{54,95,34\},$\\
$\{173,163,133\}, \{109,31,29\}, \{4,88,175\}, \{169,115,123\}, \{90,45,153\}, \{94,93,50\}, \{174,170,89\},$\\
$\{20,72,41\}, \{158,60,48\}, \{8,166,40\}, \{125,68,141\}, \{150,39,62\}, \{108,52,134\}, \{44,69,127\},$\\
$\{16,25,3\}, \{155,14,7\}, \{32,49,82\}, \{80,77,9\}, \{91,106,161\}, \{35,1,164\}, \{139,79,144\}, \{15,128,26\}$
 
\normalsize
\item
$(g;h)^u = (6;12)^5$

\noindent
\small
$\{8', 7', 29\},
\{17', 19', 1\},
\{3', 6', 14\},
\{27', 1', 28\},
\{28', 22', 11\},
\{11', 4', 7\},
\{26', 18', 24\},
\{23', 14', 2\},$\\
$\{13', 24', 17\},
\{21', 9', 8\},
\{12', 29', 6\},
\{16', 2', 18\},
\{13, 12, 4\}, 
\{19, 26, 22\}, 
\{21, 23, 9\}, 
\{16, 27, 3\}$
\normalsize

\item
$(g;h)^u = (6;12)^8$

\noindent
\small
strong starter:
$\{47,4\}, \{44,21\}, \{26,5\}, \{25,10\}, \{43,9\}, \{29,28\}, \{30,19\}, \{18,20\}, \{6,36\}, \{13,41\},$\\
$\{15,22\}, \{39,45\}, \{12,2\}, \{46,17\}, \{35,23\}, \{14,31\}, \{37,34\}, \{1,27\}, \{7,11\}, \{33,42\}, \{3,38\}$

\noindent
triples:
$\{13,43,44\}, \{11,2,22\}, \{39,34,46\}, \{12,33,47\}, \{30,20,5\}, \{29,7,26\}, \{25,19,21\}$
\normalsize

\item
$(g;h)^u = (6;12)^9$

\noindent
\small
strong starter:
$\{38,19\}, \{46,39\}, \{2,17\}, \{34,48\}, \{50,37\}, \{30,8\}, \{21,47\}, \{14,52\}, \{26,3\}, \{10,22\},$ \\
$\{40,29\}, \{33,16\}, \{4,1\}, \{53,23\}, \{6,5\}, \{7,28\}, \{12,41\}, \{44,49\}, \{32,42\}, \{15,35\}, \{13,11\}, \{24,20\},$\\
$\{43,51\},\{25,31\}$

\noindent
triples:
$\{42,17,34\}, \{4,25,30\}, \{46,26,16\}, \{41,37,43\}, \{6,47,7\}, \{10,52,21\}, \{48,51,13\}, \{23,8,1\}$
\normalsize

\item
$(g;h)^u = (12;24)^4$\\
\small
$\{46', 47', 17\}, \{23', 25', 38\}, \{13', 10', 39\}, \{39', 34', 13\}, \{15', 21', 18\}, \{26', 19', 33\}, \{18', 9', 7\}, \{33', 43', 26\},$\\
$\{38', 27', 29\}, \{7', 42', 37\}, \{11', 45', 46\}, \{17', 2', 11\}, \{6', 37', 23\}, \{1', 31', 6\}, \{41', 22', 47\}, \{14', 35', 25\},$\\ 
$\{29', 3', 2\}, \{5', 30', 15\}, \{31, 30, 41\}, \{42, 35, 21\}, \{45, 19, 22\}, \{34, 5, 3\}, \{9, 27, 14\}, \{43, 1, 10\}$
\normalsize

\item
$(g;h)^u = (12;24)^5$

\noindent
\small
strong starter:
$\{58, 59\}, \{46, 48\}, \{38, 41\}, \{32, 36\}, \{28, 34\}, \{7, 14\}, \{2, 54\}, \{1, 52\},
 \{18, 29\}, \{21, 33\},$\\
$\{24, 37\}, \{12, 26\}, \{3, 19\}, \{8, 51\}, \{4, 22\}, \{16, 57\},  \{23, 44\}, \{31, 53\},
\{6, 43\}, \{11, 47\}, \{13, 39\},$\\
$\{9, 42\}, \{17, 49\}, \{27, 56\}$

\noindent
triples:
$\{17, 28, 9\}, \{48, 11, 27\}, \{16, 43, 14\}, \{3, 37, 41\}, \{42, 33, 39\}, \{36, 29, 12\}, \{31, 44, 32\}, \{46, 4, 18\}$
\normalsize

\item
$(g;h)^u = (12;24)^8$

\noindent
\small
strong starter:
$\{89, 90\}, \{33, 35\}, \{11, 14\}, \{9, 13\}, \{94, 3\}, \{59, 65\}, \{66, 73\}, \{49, 58\},
\{36,46\}, \{67,78\}, $\\
$\{43, 55\}, \{17, 30\}, \{70,84\}, \{19,34\}, \{22, 39\}, \{77,95\},
\{68,87\}, \{21,41\},\{53,74\},\{76,2\}, \{93, 20\}, $\\
$\{25,50\}, \{37,63\}, \{1, 28\}, \{23, 51\}, \{85,18\},\{27,57\}, \{71, 6\}, \{75,12\}, \{52,86\}, \{47, 82\}, \{45, 81\},$\\
$\{7, 44\}, \{62,4\}, \{15, 54\}, \{60,5\}, \{92, 38\}, \{26,69\}, \{83,31\}, \{61, 10\},
\{79, 29\}, \{91, 42\}$

\noindent
triples:
$\{21, 94, 20\},\{45, 54, 79\},\{92, 35, 23\},\{27, 44, 86\},\{9, 55, 90\},\{6, 36, 89\},
\{81, 85, 67\},$\\
$\{19, 57, 13\},\{70, 73, 63\},\{10, 5, 38\},\{41, 60, 39\},\{3, 14, 50\},
\{93, 26, 52\},\{15, 91, 46\}$
\end{itemize}

\normalsize
Following are two Kirkman triple systems with subdesigns used in our recursive constructions.
\begin{itemize}
\item
KTS$(21)$ with STS$(9)$ as a subdesign (see also \cite{CCIL,KO})

\noindent
Design:
$\{0,1,3\}$ developed (mod $7$), then expanded using an RTD$(3,3)$.

\noindent
Resolution:

\small
$\{0_1, 1_0, 3_2\}$, 
  $\{1_1, 2_1, 4_1\}$, 
  $\{2_0, 3_1, 5_2\}$, 
  $\{3_0, 4_0, 6_0\}$, 
  $\{4_2, 5_1, 0_0\}$, 
  $\{5_0, 6_1, 1_2\}$, 
  $\{6_2, 0_2, 2_2\}$

 $\{0_0, 1_2, 3_1\}$, 
  $\{1_0, 2_0, 4_0\}$, 
  $\{2_2, 3_2, 5_2\}$, 
  $\{3_0, 4_2, 6_1\}$, 
  $\{4_1, 5_1, 0_1\}$, 
  $\{5_0, 6_2, 1_1\}$, 
  $\{6_0, 0_2, 2_1\}$

 $\{0_2, 1_1, 3_0\}$, 
  $\{1_0, 2_2, 4_1\}$, 
  $\{2_1, 3_1, 5_1\}$, 
  $\{3_2, 4_0, 6_1\}$, 
  $\{4_2, 5_0, 0_1\}$, 
  $\{5_2, 6_2, 1_2\}$, 
  $\{6_0, 0_0, 2_0\}$

 $\{0_2, 1_2, 3_2\}$, 
  $\{1_0, 2_1, 4_2\}$, 
  $\{2_0, 3_0, 5_0\}$, 
  $\{3_1, 4_0, 6_2\}$, 
  $\{4_1, 5_2, 0_0\}$, 
  $\{5_1, 6_1, 1_1\}$, 
  $\{6_0, 0_1, 2_2\}$

 $\{0_0, 1_0, 3_0\}$, 
  $\{1_2, 2_0, 4_1\}$, 
  $\{2_2, 3_1, 5_0\}$, 
  $\{3_2, 4_2, 6_2\}$, 
  $\{4_0, 5_1, 0_2\}$, 
  $\{5_2, 6_0, 1_1\}$, 
  $\{6_1, 0_1, 2_1\}$

 $\{0_1, 1_1, 3_1\}$, 
  $\{1_2, 2_2, 4_2\}$, 
  $\{2_1, 3_0, 5_2\}$, 
  $\{3_2, 4_1, 6_0\}$, 
  $\{4_0, 5_0, 0_0\}$, 
  $\{5_1, 6_2, 1_0\}$, 
  $\{6_1, 0_2, 2_0\}$

 $\{0_0, 1_1, 3_2\}$, 
  $\{1_2, 2_1, 4_0\}$, 
  $\{2_2, 3_0, 5_1\}$, 
  $\{3_1, 4_2, 6_0\}$, 
  $\{4_1, 5_0, 0_2\}$, 
  $\{5_2, 6_1, 1_0\}$, 
  $\{6_2, 0_1, 2_0\}$

 $\{0_1, 1_2, 3_0\}$, 
  $\{1_1, 2_2, 4_0\}$, 
  $\{2_0, 3_2, 5_1\}$, 
  $\{3_1, 4_1, 6_1\}$, 
  $\{4_2, 5_2, 0_2\}$, 
  $\{5_0, 6_0, 1_0\}$, 
  $\{6_2, 0_0, 2_1\}$

 $\{0_2, 1_0, 3_1\}$, 
  $\{1_1, 2_0, 4_2\}$, 
  $\{2_1, 3_2, 5_0\}$, 
  $\{3_0, 4_1, 6_2\}$, 
  $\{4_0, 5_2, 0_1\}$, 
  $\{5_1, 6_0, 1_2\}$, 
  $\{6_1, 0_0, 2_2\}$

$\{0_0,0_1,0_2\}$,
$\{1_0,1_1,1_2\}$,
$\{2_0,2_1,2_2\}$,
$\{3_0,3_1,3_2\}$,
$\{4_0,4_1,4_2\}$,
$\{5_0,5_1,5_2\}$,
$\{6_0,6_1,6_2\}$,

\normalsize

\medskip
\item
KTS$(75)$ with disjoint STS$(33)$ and KTS$(9)$ subdesigns

\noindent
Design: developed as translates (mod 33) of the base blocks\\
\underline{$\{0,11,22\}$},  \underline{$\{0,10,14\}$},
$\{18, 13, 30\}$, $\{23, 5, 25\}$, $\{11, 17, 8\}$, $\{0, 1, 26\}$,
\underline{$\{0',11',22'\}$}, \underline{$\{0',1',5'\}$}
$\{\infty_1, 25', 32\}$,
$\{\infty_2, 19', 6\}$,  
$\{\infty_3, 32', 16\}$,
$\{\infty_4, 12', 28\}$,  
$\{\infty_5, 0', 14\}$,
$\{\infty_6, 28', 27\}$,
$\{\infty_7, 8', 29\}$, 
$\{\infty_8, 11', 12\}$,
$\{\infty_9, 17', 10\}$,
$\{29', 27', 19\}$,
$\{13', 16', 21\}$, 
$\{10', 4', 22\}$, 
$\{5', 31', 9\}$, 
$\{30', 22', 24\}$,
$\{23', 14', 20\}$,
$\{3', 26', 3\}$, 
$\{21', 9', 7\}$,
$\{20', 7', 2\}$,
$\{15', 1', 4\}$,
$\{6', 24', 15\}$,
$\{2', 18', 31\}$

\noindent
Resolution: the underlined blocks define four parallel classes with KTS$(9)$ on $\{\infty_1,\dots,\infty_9\}$; the remaining blocks define 33 parallel classes under translates (mod 33).
\end{itemize}



\begin{thebibliography}{99}

\bibitem{Anderson}
I. Anderson, Combinatorial Designs: Construction Methods, Ellis Horwood Limited, 
Chichester, England, 1990.

\bibitem{CDLL}
J.H.~Chan, P.J.~Dukes, E.R.~Lamken and A.C.H. Ling, Asymptotic existence of
resolvable group divisible designs.
\emph{J. Combin. Des.} 21 (2013) 112--126.

\bibitem{CCIL}
M.B. Cohen, C.J. Colbourn, L.A. Ives and A.C.H. Ling,
Kirkman triple systems of order 21 with nontrivial automorphism group.
\emph{Mathematics of Computation} 71 (2002), 873--881. 

\bibitem{Handbook}
C.J.~Colbourn and J.H.~Dinitz, eds., The CRC Handbook of Combinatorial
Designs, 2nd edition, CRC Press, Boca Raton, 2006.

\bibitem{CR}
C.J.~Colbourn and A.~Rosa, {\em Triple Systems}, Oxford Univ. Press, 1999.

\bibitem{Bluebk}
J.H.~Dinitz and D.R.~Stinton, ``Room Squares and Related Designs'' in 
Contemporary Design Theory: A Collection of Surveys, J.H.~Dinitz and D.R.~Stinson 
(Editors), John Wiley \& Sons, (1992), 137--204.

\bibitem{DinStin-hill}
J.H.~Dinitz and D.R.~Stinson, A fast algorithm for finding strong starters.
\emph{SIAM J. Alg. Disc. Meth.} 2 (1981), 50--56.

\bibitem{DinStin-howellhill}
J.H.~Dinitz and D.R.~Stinson, A note on Howell designs of odd side.
\emph{Utilitas Math.} 18 (1980), 207--216. 

\bibitem{DW}
J.~Doyen and R.M.~Wilson,
Embeddings of Steiner triple systems. 
\emph{Discrete Math.} 5 (1973), 229--239.

\bibitem{DLL}
P.J.~Dukes, E.R.~Lamken and A.C.H.~Ling, An existence theory for incomplete designs.
\emph{Canad. Math. Bull.} 59 (2016), 287--302.

\bibitem{Hanani}
H.~Hanani, Balanced incomplete block designs and related designs.
\emph{Discrete Math.} 11 (1975), 255--360.

\bibitem{KO}
J.I. Kokkala and P.R.J. \"Osterg\aa rd, Kirkman triple systems with subsystems. 
\emph{Discrete Math.} 343 (2020), 111960, 8 pp.

\bibitem{Kirkman-STS}
T.P. Kirkman, On a Problem in Combinations. 
\emph{The Cambridge and Dublin Mathematical Journal} II (1847), 191--204.

\bibitem{Kirkman-schoolgirl}
T.P. Kirkman, Query VI.
\emph{Lady's and Gentleman's Diary} 147 (1850), 48.


\bibitem{MSVW-frames}
R.C.~Mullin, P.J.~Schellenberg, S.A.~Vanstone, and W.D.~Wallis,
On the existence of frames.
\emph{Discrete Math.} 37 (1981)79--104.

\bibitem{MSV}
R.C. Mullin, D.R. Stinson, and S.A. Vanstone, Kirkman triple systems containing maximum subdesigns. 
\emph{Utilitas Math.} 21 (1982), 283--300.

\bibitem{MV}
R.C. Mullin and S.A. Vanstone, Steiner systems and Room squares.
\emph{Annals of Discrete Math.} 7 (1980) 95--104. 

\bibitem{RCWil-school}
D.K.~Ray-Chaudhuri and R.M.~Wilson, Solution of Kirkman's school-girl problem.
Proc. Symp. Pure Math. 19 (1971), Amer. Math. Soc., Providence, RI, 187--203.

\bibitem{RS}
R. Rees and D.R. Stinson, 
On the existence of Kirkman triple systems containing Kirkman subsystems.
\emph{Ars Combin.} 26 (1988), 3--16. 

\bibitem{Stinson-hill}
D.R.~Stinson, Hill-climbing algorithms for the construction of combinatorial designs.
\emph{Ann. Discrete Math.} 26 (1985), 321--334.

\bibitem{Stinson-KTS}
D.R. Stinson, A survey of Kirkman triple systems and related designs. 
\emph{Discrete Math.} 92 (1991), 371--393.

\bibitem{Stinson-Kirkframe}
D.R.Stinson,  Frames for Kirkman systems.
\emph{Discrete Math.} 65 (1987), 289--300.

\bibitem{Wilsonconst}
R.M.~Wilson,
Constructions and uses of pairwise balanced designs.
in Mathematisch Centre Tracts, vol 55, Mathematisch Centrum, Amersterdam,
1974, 18--41.

\end{thebibliography}
\end{document}